\title[Projective tensor product of $C^*$-algebras] {On Banach space
  projective tensor product of $C^*$-algebras}
\author[V. P. Gupta]{Ved Prakash Gupta}
\address{School of Physical
  Sciences,\\ Jawaharlal Nehru University\\ New Delhi-110067, INDIA.}
\email{vedgupta@mail.jnu.ac.in}
\author[R. Jain]{Ranjana Jain}
\address{Department of
  Mathematics\\ University of Delhi\\ Delhi-110007, INDIA.}
\email{rjain@maths.du.ac.in}
\thanks{The first named author was supported by a UPE-II
  project (with Id 228) of Jawaharlal Nehru University, New Delhi}
\newtheorem{theorem}{\sc Theorem}[section]
\newtheorem{cor}[theorem]{\sc Corollary}
\newtheorem{prop}[theorem]{\sc Proposition}
\newtheorem{lemma}[theorem]{\sc Lemma}
\numberwithin{equation}{section}
\theoremstyle{remark}
\newcommand{\oop}{\widehat\otimes}
\newcommand{\seq}{\subseteq}
\newcommand{\oh}{\otimes^h}
\newcommand{\omin}{\otimes^{\min}}
\newcommand{\obp}{\otimes^\gamma}
\newcommand{\ra}{\rightarrow}
\newcommand{\ot}{\otimes}
\newcommand{\ota}{\otimes^{\alpha}}
\newcommand{\Z}{\mathcal{Z}}
\newcommand{\m}{\mathcal{M}}
\newcommand{\C}{\mathbb{C}}
\newcommand{\N}{\mathbb{N}}
\newcommand{\ol}{\overline}
\newcommand{\mcal}{\mathcal}
\begin{document}
\keywords{Banach $*$-algebras, $C^\ast$-algebras, Banach space
  projective tensor product, $*$-subalgebras and ideals}

\subjclass[2010]{46L06,46M05,46H10}

\begin{abstract}
  We analyze certain algebraic structures of the Banach space
  projective tensor product of $C^*$-algebras which are comparable
  with their known counterparts for the Haagerup tensor product and
  the operator space projective tensor product of
  $C^*$-algebras. Highlights of this analysis include (a) injectivity
  of the Banach space projective tensor product when restricted to the
  tensor products of $C^*$-algebras, (b) detailed structure of closed
  ideals of $A \obp B$ in terms of those of $A$ and $B$, (c)
  identification of certain spaces of ideals of $A \obp B$ in terms of
  those of $A$ and $B$ from the perspective of hull-kernel topology,
  and (d) identification of the center of $A \obp B$ with $\Z(A) \obp
  \Z(B)$, where  $A$ and $B$ are  $C^*$-algebras.
 \end{abstract}

\maketitle

\section{ Introduction}

Around 1960s, Gelbaum initiated the study of certain spaces of ideals
of the Banach space projective tensor product $A \obp B$ of Banach
algebras $A$ and $B$, who was then followed by Laursen and Tomiyama -
see \cite{gelbaum1, gelbaum2, gelbaum3, laur, tom} and the references
therein. They focussed mainly on analyzing the spaces of maximal
modular ideals and primitive ideals of $A \obp B$ in terms of those of
$A$ and $B$, and to analyze which properties of Banach algebras are
passed on to their tensor products, where $A$ or $B$ or both is/are
commutative.  However, not much was discovered about the structure of
general ideals of $A \obp B$ in terms of those of $A$ and $B$. A
similar hardship has been observed in understanding the ideal
structure of $A \omin B$ as well, for $C^*$-algebras $A$ and $B$.

On the other hand, the analysis of various algebraic structures of the
Haagerup tensor product $A \oh B$ and the operator space projective
tensor product $A \oop B$, of $C^*$-algebras $A$ and $B$, has been
carried out extensively during last three decades (see \cite{ass,
  arc3, JK, jk11, JK-edin, jk-13, kr-1, kr-2, KS}).  An important and
useful development of this project has been the discovery of the
connection that exists between the structures of centers and ideals of
$A \oh B$ and $A \oop B$ in terms of those of $A$ and $B$, all of
which was pioneered by the remarkable work \cite{ass} of Allen,
Sinclair and Smith, wherein they study closed ideals of $A \oh B$. This
article aims at obtaining similar results for $A \obp B$, for
$C^*$-algebras $A$ and $B$.

A closer look reveals that a crucial ingredient that helped to
establish the relationships mentioned above was the injectiviy of
$\oh$ and a partial injectivity of $\oop$ (see \Cref{ideals}, below),
along with an exactness type property (as in \Cref{kernel-phi-ot-psi})
exhibited by both tensor norms. It is known that $\obp $ is not
injective; and, might be due to the lack of any partial injectivity
result, not much could be known about the algebraic structure of $A
\obp B$.  However, exploiting a work of Diestel et al. \cite{puglisi}, the
second named author had demonstrated in her Ph.D. thesis \cite{jain}
that, when restricted to the tensor products of $C^*$-algebras, $\obp$
observes a better partial injectivity than $\oop$, as is re-produced
in Section 2 below. This turns out to have beautiful consequences in
the study of algebraic structures of $A \obp B$, which is achieved
primarily by employing a similar line of treatment as in
\cite{ass}. We present here few of those consequences and are quite
hopeful that we can deduce more.

For any algebra $A$, let $\m(A)$ (resp., $\m_m(A)$) denote the set of
maximal ideals (resp., maximal modular ideals) of $A$. In his first
article to analyze spaces of ideals of $A \obp B$, Gelbaum
(\cite{gelbaum1}) showed that if $A$ and $B$ are commutative Banach
algebras, then there is a homeomorphism between $\m_m(A) \times
\m_m(B)$ and $\m_m(A \obp B)$, when the spaces are equipped with
$w^*$-topologies. Then, in \cite{gelbaum3}, he replaced the condition
of commutativity by existence of unity and obtained an injective map
from $\m(A) \times \m(B)$ into $\m(A \obp B)$ which turned out to be
closed and continuous with respect to the hull-kernel
topology. Surjectivity is still an open question in this case. Almost
simultaneously, Laursen (\cite{laur}) dropped the commutativity of one
of the Banach algebras and established that the spaces $\m_m(A) \times
\m_m(B)$ and $\m_m(A \obp B)$ are homeomorphic with respect to
hull-kernel topology.  For arbitrary $C^*$-algebras $A$ and $B$ (not
necessarily unital or commutative), based on the ideal structue of $A
\obp B$ discussed above, we establish that there is a homeomorphism
from $Id'(A) \times Id'(B)$ onto its image (which is also dense) in
$Id'(A \obp B)$, with respect to $\tau_w$-topology, where $Id'(A)$
denotes the set of proper closed ideals of $A$.  Moreover, this map
restricts to a homeomorphism from $\m_m(A) \times \m_m(B)$ (resp., $\m(A) \times \m(B)$) onto
$\m_m(A \obp B)$ (resp., $\m(A \obp B)$ )with respect to the hull-kernel topology.

Here is a brief overview of the topics discussed in this article. As
mentioned above, Section 2 is devoted towards establishing a partial
injectivity of $\obp$ when restricted to tensor products of
$C^*$-algebras, which is used throughout the article.  Section 3 is
the soul of this article which provides a thorough discussion of ideal
structure of $A \obp B$ in terms of ideals of $C^*$-algebras $A$ and
$B$. Among other things, we show that every closed ideal of $A \obp B$
contains a product ideal; that if $A$ or $B$ has finitely many closed
ideals, then every closed ideal of $A \obp B$ is a sum of product
ideals, and we identify minimal (resp., maximal and maximal modular)
ideals of $A \obp B$ in terms of those of $A$ and $B$. In Section 4,
{we identify spaces of proper (resp., maximal and maximal modular)
  ideals of $A \obp B$ in terms } of those of $A$ and $B$ from the
perspective of hull-kernel topology. Finally, Section 5 provides
another application of partial injectivity of $\obp$, where we provide
an identification of the center of $A \obp B$ with $\Z(A) \obp \Z(B)$.

\section{Injectivity of the Banach space projective norm on tensor products
  of $C^*$-algebras}

Recall that, a norm $\|\cdot \|_{\alpha}$ on the
 algebraic tensor product $A \ot B$ of a pair of $C^*$-algebras $A$
 and $B$ is said to be
\begin{enumerate}
\item a {\em cross norm} if $\|a \ot b\|_\alpha = \|a\|\,
  \|b\|$ for all $a \in A$, $b \in B$,
\item an {\em algebra norm} if $\|w\, z \|_{\alpha}  \leq \| w \|_{\alpha}
  \, \|z\|_{\alpha} $ for all $w, z \in A \ot B$, and 
\item a {\em tensor norm} if $\| \cdot \|_{\lambda} \leq \| \cdot
  \|_{\alpha} \leq \| \cdot \|_{\gamma}$, where $\lambda$ and $\gamma$
  are the Banach space injective and projective norms, respectively.
\end{enumerate}
Clearly, $A \otimes^{\alpha} B$, the completion of $A \ot B$ with
respect to any algebra norm $\|\cdot\|_\alpha$, is a Banach
algebra. From an algebraic point of view, among the well analyzed tensor
products in literature are the $C^*$-minimal tensor product ($\omin$),
the  Haagerup tensor product ($\oh$), the operator
space projective tensor product ($\oop$) { and the Banach space
  projective tensor product ($\ot^\gamma$)}.  We refer the reader to
\cite{ERbook, Tak} and \cite{ryan} for their definitions and
essential properties. All these norms are cross algebra tensor
norms and yield Banach algebras. In fact, for $C^*$-algebras $A$ and
$B$, the canonical involution on $A \ot B$ is not
isometric \footnote{We follow the convention that the involution on a
  Banach $*$-algebra is an isometry.} with respect to $\oh$ (see
\cite{ass}); however, by the very definition of
$\|\cdot\|_\gamma$, $A \ot^\gamma B$ is a Banach $*$-algebra; and, by
\cite{Kum01}, so is $A \oop B$. 

The $C^*$-minimal tensor product is known to be
injective and so is the Haagerup tensor product (see \cite{ass, ERbook}). On
the other hand, neither the Banach space projective tensor product nor the operator space projective
tensor product is injective. But in few cases they behave well. For
instance, Kumar (\cite{Kum01}) proved that the tensor product of
ideals of $C^*$-algebras can still be embedded nicely.

\begin{prop}(\cite[Theorem  5]{Kum01})\label{ideals}
Let $A$ and $B$ be $C^*$-algebras, and $I$ and $J$ be closed ideals
in $A$ and $B$, respectively. Then the identity map on $I \ot J$
extends to an isometric algebra map from the Banach algebra $I
\oop J$ onto the product ideal $\ol{I \ot J} \subseteq A
\oop B$.
\end{prop}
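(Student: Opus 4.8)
The plan is to show that the identity map on $I \ot J$ is isometric for the two $\oop$-norms; completeness of $I \oop J$ together with density of $I \ot J$ then upgrades this to an isometric algebra isomorphism onto $\ol{I \ot J}$. One inequality is essentially free: the inclusions $I \hookrightarrow A$ and $J \hookrightarrow B$ are complete contractions (indeed complete isometries, being $*$-subalgebras), and $\oop$ is functorial for completely bounded maps, so the induced map $I \oop J \ra A \oop B$ is completely contractive. Hence $\|u\|_{A \oop B} \leq \|u\|_{I \oop J}$ for every $u \in I \ot J$.

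For the reverse inequality I would exploit approximate identities together with the same functoriality. Let $(e_\lambda)$ and $(f_\mu)$ be contractive approximate identities for the $C^*$-algebras $I$ and $J$, and define $\Phi_\lambda : A \ra I$, $\Phi_\lambda(a) = e_\lambda a e_\lambda$, and $\Psi_\mu : B \ra J$, $\Psi_\mu(b) = f_\mu b f_\mu$. These are well defined precisely because $I$ and $J$ are ideals, and they are completely contractive, being compositions of left and right multiplications by contractions in a $C^*$-algebra. By functoriality of $\oop$, each $\Phi_\lambda \oop \Psi_\mu : A \oop B \ra I \oop J$ is then completely contractive.

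Now fix $u \in I \ot J$, say $u = \sum_i a_i \ot b_i$ with $a_i \in I$, $b_i \in J$. Measuring the image in the range space gives $\|(\Phi_\lambda \oop \Psi_\mu)(u)\|_{I \oop J} \leq \|u\|_{A \oop B}$. On the other hand $(\Phi_\lambda \oop \Psi_\mu)(u) = \sum_i e_\lambda a_i e_\lambda \ot f_\mu b_i f_\mu$ converges to $u$ in $I \oop J$: on each elementary tensor the cross-norm property yields $\|e_\lambda a_i e_\lambda \ot f_\mu b_i f_\mu - a_i \ot b_i\|_{I \oop J} \ra 0$, using $e_\lambda a_i e_\lambda \ra a_i$ in $I$ and $f_\mu b_i f_\mu \ra b_i$ in $J$. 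Continuity of the norm then gives $\|u\|_{I \oop J} = \lim_{\lambda, \mu} \|(\Phi_\lambda \oop \Psi_\mu)(u)\|_{I \oop J} \leq \|u\|_{A \oop B}$, so the two norms agree on $I \ot J$.

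Finally I would record that the extension is an algebra homomorphism: it is the identity on the dense subalgebra $I \ot J$, and multiplication is jointly continuous, so multiplicativity passes to the completion; its image is the closure of $I \ot J$ inside $A \oop B$, namely the product ideal $\ol{I \ot J}$, which is closed since the map is isometric and $I \oop J$ is complete. The main obstacle is the reverse inequality, and everything there hinges on producing completely contractive ``compressions into the ideal'' $\Phi_\lambda \oop \Psi_\mu$ that approximate the identity on $I \ot J$. This is exactly where the projectivity-type functoriality of $\oop$ and the ideal structure are indispensable; the argument would collapse for a general operator subspace, consistent with the failure of injectivity of $\oop$.
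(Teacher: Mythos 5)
Your proposal is correct, but note that the paper does not prove this proposition at all: it is quoted verbatim as \cite[Theorem 5]{Kum01} and used as a black box (the authors' own contribution in this direction is \Cref{obp-injective}, the stronger statement for $\obp$, which they prove by a completely different, duality-based route through $2$-$C^*$-summing operators). Your argument is essentially the standard proof of Kumar's theorem. The easy inequality $\|u\|_{A\oop B}\le\|u\|_{I\oop J}$ from functoriality of $\oop$ applied to the complete isometries $I\hookrightarrow A$, $J\hookrightarrow B$ is fine, and the reverse inequality via the compressions $\Phi_\lambda(a)=e_\lambda a e_\lambda$, $\Psi_\mu(b)=f_\mu b f_\mu$ is sound: these land in $I$ and $J$ precisely because they are two-sided ideals, they are complete contractions as products of left and right multiplications by contractions, and $(\Phi_\lambda\oop\Psi_\mu)(u)\ra u$ in $I\oop J$ for $u$ a finite sum of elementary tensors since approximate identities in $C^*$-algebras are two-sided, so $e_\lambda a e_\lambda\ra a$ in norm for $a\in I$. (Equivalently, one can dualize: every jointly completely bounded bilinear form on $I\times J$ extends norm-preservingly to $A\times B$ via a weak-$*$ limit of $(a,b)\mapsto V(e_\lambda a e_\lambda, f_\mu b f_\mu)$; this is the form in which the argument usually appears.) Two small points worth recording explicitly: the surjectivity onto $\ol{I\ot J}$ uses that the isometric image of the complete space $I\oop J$ is closed, hence equals the closure of $I\ot J$ in $A\oop B$; and the fact that $\ol{I\ot J}$ is actually an ideal of $A\oop B$ (implicit in the word ``product ideal'') follows from $(A\ot B)(I\ot J)\subseteq I\ot J$ together with continuity of multiplication, which you should state since the proposition asserts the range is an ideal.
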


The situation with the Banach space projective tensor product of
$C^*$-algebras is even better (\Cref{obp-injective} below) as was proved
in the doctoral thesis \cite{jain} of the second named author. Given
its usefulness, we include the details for convenience of present and
future references.

We first recall some concepts whose details can be found in
\cite{puglisi} and the references therein. The {\it strong*-topology}
on a Banach space $X$ is defined to be the locally convex topology
generated by the seminorms $x \mapsto \|Tx\|$ for bounded linear maps
$T$ from $X$ into Hilbert spaces. Also, an operator $T: A\ra X$, where
$A$ is a $C^*$-algebra and $X$ is a Banach space, is said to be {\it
  $p\text{-}C^*$-summing} for $p>0$ if there exists a constant $k$
such that for every finite sequence $(a_1,a_2,\dots, a_n)$ of
self-adjoint elements in $A$,
$$\left( \sum_{i=1}^n \|T(a_i)\|^p \right)^{1/p} \leq k \Bigg\| \left(
\sum_{i=1}^n |a_i|^p \right)^{1/p} \Bigg\|, $$ where for $a\in A$,
$|a| := (\frac{aa^*+ a^*a}{2})^{1/2}$. We first collect few useful results.

\begin{prop}  \cite[$\S 3$]{puglisi}:\label{2-summ-strong}
  For a $C^*$-algebra $A$ and Banach space $X$, an operator $T:A
\rightarrow X$ is 2-$C^*$-summing if and only if it is strong*-norm
continuous.
\end{prop}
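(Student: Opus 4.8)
The plan is to first replace the abstract strong*-topology on $A$ by a concrete family of seminorms and then run a Pietsch-type factorization. For states $f,g$ on $A$ set $p_{f,g}(x)=\big(f(x^*x)+g(xx^*)\big)^{1/2}$. I claim the strong*-topology on $A$ is precisely the locally convex topology generated by $\{p_{f,g}\}$. One inclusion is elementary: using GNS data $(\pi_f,H_f,\xi_f)$ and $(\pi_g,H_g,\xi_g)$, the \emph{linear} map $S_{f,g}\colon A\to H_f\oplus\overline{H_g}$, $x\mapsto\big(\pi_f(x)\xi_f,\ \overline{\pi_g(x^*)\xi_g}\big)$ (passing to the conjugate Hilbert space $\overline{H_g}$ turns the antilinear assignment $x\mapsto\pi_g(x^*)\xi_g$ into a linear one) satisfies $\|S_{f,g}x\|=p_{f,g}(x)$, so each $p_{f,g}$ is a strong*-seminorm. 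The reverse inclusion is the substantial input: for any bounded $S\colon A\to H$ the non-commutative little Grothendieck theorem furnishes states $f,g$ with $\|Sx\|\le\|S\|\,p_{f,g}(x)$ for all $x$, so every defining strong*-seminorm is dominated by some $p_{f,g}$. I would cite this theorem rather than reprove it.

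Granting this, the forward implication (2-$C^*$-summing $\Rightarrow$ strong*-norm continuous) is the heart of the matter and proceeds by Pietsch factorization. Let $k$ be the summing constant and let $S(A)$ denote the state space with its weak*-topology (compact and convex). For self-adjoint $a$ one has $|a|=(a^2)^{1/2}$, and $\|(\sum_i a_i^2)^{1/2}\|^2=\|\sum_i a_i^2\|=\sup_{\phi\in S(A)}\sum_i\phi(a_i^2)$. Hence the summing inequality says that every function of the form $\phi\mapsto\sum_i\big(k^2\phi(a_i^2)-\|Ta_i\|^2\big)$, for finite self-adjoint families $(a_i)$, has nonnegative supremum over $S(A)$. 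These functions form a convex cone $\mathcal P\subseteq C_{\R}(S(A))$ disjoint from the open cone of strictly negative functions; Hahn--Banach separation then yields, via the Riesz representation theorem, a positive measure on $S(A)$ which I normalize to a probability measure $\mu$. Its barycenter $\phi_0=\int_{S(A)}\phi\,d\mu$ is a state, and evaluating the cone condition on one-element families gives $\|Ta\|\le k\,\phi_0(a^2)^{1/2}$ for every self-adjoint $a$.

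To pass to arbitrary $x\in A$ I would split $x=a+ib$ into self-adjoint parts, use $\|Tx\|\le\|Ta\|+\|Tb\|$ together with the identity $a^2+b^2=\tfrac12(x^*x+xx^*)$ and a Cauchy--Schwarz step, to obtain $\|Tx\|\le k\,p_{\phi_0,\phi_0}(x)$ for all $x$. By the first paragraph $p_{\phi_0,\phi_0}$ is a strong*-continuous seminorm, so $x_\alpha\to0$ strong* forces $\|Tx_\alpha\|\to0$; that is, $T$ is strong*-norm continuous. For the converse, continuity at $0$ controls $T$ by finitely many defining seminorms, hence by finitely many $p_{f_i,g_i}$; combining these into a single positive functional pair $(f,g)=(\sum_i f_i,\sum_i g_i)$ produces a constant $C$ with $\|Tx\|\le C\,p_{f,g}(x)$. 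Restricting to self-adjoint $a_i$, where $x^*x=xx^*=a_i^2$, and using positivity of $h:=f+g$ to bound $\sum_i h(a_i^2)=h(\sum_i a_i^2)\le\|h\|\,\|\sum_i|a_i|^2\|$, recovers the 2-$C^*$-summing estimate with constant $C\|h\|^{1/2}$.

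The main obstacle is the Pietsch factorization in the forward direction: correctly setting up the convex cone in $C_{\R}(S(A))$, verifying its disjointness from the negative cone, and extracting a \emph{single} dominating state as the barycenter of the separating measure. A secondary difficulty is that the summing inequality constrains only self-adjoint elements, so some care is needed to propagate the resulting estimate to all of $A$; the identity $a^2+b^2=\tfrac12(x^*x+xx^*)$ is exactly what makes this work and links the bound back to the strong*-seminorm $p_{\phi_0,\phi_0}$.
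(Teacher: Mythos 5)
The paper offers no proof of this proposition --- it is quoted directly from Diestel--Peralta--Puglisi \cite[$\S 3$]{puglisi} --- so the only comparison available is with the standard argument, and that is exactly what you give: the noncommutative little Grothendieck theorem to identify the strong*-topology on $A$ with the topology of the seminorms $p_{f,g}$, and a Pietsch-type separation/domination argument to extract a single dominating positive functional from the $2$-$C^*$-summing inequality. Your argument is correct, including the constants: the passage from self-adjoint elements to general $x$ via $a^2+b^2=\tfrac12(x^*x+xx^*)$ and Cauchy--Schwarz yields precisely $\|Tx\|\le k\,p_{\phi_0,\phi_0}(x)$, and the converse correctly isolates the little Grothendieck theorem as the essential input (boundedness of $S_j$ alone would only give an $n$-dependent constant). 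The one point to repair is that for non-unital $A$ the state space is not weak*-compact, so the cone separation and Riesz representation should be carried out on the quasi-state space $Q(A)=\{\phi\ge 0:\|\phi\|\le 1\}$, which is weak*-compact, convex, and still satisfies $\|c\|=\sup_{\phi\in Q(A)}\phi(c)$ for $c\ge 0$; the barycenter is then a positive functional of norm at most one rather than a state, and the GNS construction and the rest of your argument go through unchanged.
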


\begin{theorem}\cite[Proposition 2.1]{haag}\label{b3}
 Every bounded linear map $T:A\ra B^*$, $A$ and $B$ being
 $C^*$-algebras, can be factored through a Hilbert space, that is,
 there exists a Hilbert space $H$, and bounded linear maps $u:A \ra H
 $ and $v: H \ra B^*$ such that $T = v\circ u$.
\end{theorem}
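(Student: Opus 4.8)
The plan is to recast the linear map as a bilinear form and then factor that form through Hilbert spaces in both variables, repackaging the result as a factorization of $T$. First I would associate to $T$ the bounded bilinear functional $V : A \times B \ra \C$ defined by $V(a,b) = \langle T(a), b\rangle$, where $\langle \cdot, \cdot\rangle$ is the duality pairing between $B^*$ and $B$; clearly $\|V\| = \|T\|$. The whole task then reduces to producing Hilbert seminorms on $A$ and $B$ that dominate $V$, since such a domination is exactly what lets $V$ descend to a bounded form on a product of Hilbert spaces.

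The substantive input---and the step I expect to be the main obstacle---is the noncommutative Grothendieck inequality of Haagerup and Pisier. Invoking it, there exist states $f_1, f_2$ on $A$, states $g_1, g_2$ on $B$, and a universal constant $K$ such that
\[
|V(a,b)| \le K\,\|V\|\,\big(f_1(a^*a) + f_2(aa^*)\big)^{1/2}\,\big(g_1(b^*b) + g_2(bb^*)\big)^{1/2}
\]
for all $a \in A$, $b \in B$. (In the nonunital case one first passes to unitizations or uses an approximate identity; this is routine.) Using these states I would run a GNS-type construction: the Hermitian forms $\langle a_1, a_2\rangle_A = f_1(a_2^* a_1) + f_2(a_1 a_2^*)$ and $\langle b_1, b_2\rangle_B = g_1(b_2^* b_1) + g_2(b_1 b_2^*)$ are positive semidefinite, so quotienting by their null spaces and completing yields Hilbert spaces $H_A, H_B$ together with linear maps $u : A \ra H_A$ and $w : B \ra H_B$ satisfying $\|u(a)\|^2 = f_1(a^*a) + f_2(aa^*) \le 2\|a\|^2$ and the analogous bound for $w$; in particular $u$ and $w$ are bounded. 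The displayed inequality now reads $|V(a,b)| \le K\|V\|\,\|u(a)\|\,\|w(b)\|$, which shows both that $V(a,b)$ depends only on $u(a)$ and $w(b)$ and that the induced map is bounded. Hence $V$ descends to a bounded bilinear form $\widehat V$ on $u(A) \times w(B)$ with $\|\widehat V\| \le K\|V\|$, and by continuity it extends to all of $H_A \times H_B$.

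It remains to repackage $\widehat V$ as a map into $B^*$. Setting $H = H_A$ and keeping $u$ as above, I would define $v : H \ra B^*$ by $\langle v(\xi), b\rangle = \widehat V(\xi, w(b))$. For fixed $\xi$ the right-hand side is linear in $b$ and bounded by $\sqrt{2}\,K\|V\|\,\|\xi\|\,\|b\|$, so $v(\xi)$ is indeed a bounded functional on $B$ and $v$ is a bounded linear map. Finally, for all $a \in A$ and $b \in B$,
\[
\langle v(u(a)), b\rangle = \widehat V(u(a), w(b)) = V(a,b) = \langle T(a), b\rangle,
\]
so $v \circ u = T$, which is the desired factorization through the Hilbert space $H$. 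Everything past the Grothendieck inequality is soft functional analysis; the genuine difficulty is concentrated entirely in that inequality, whose proof (via the theory of completely bounded and $2$-summing maps, or Haagerup's original tensor-norm argument) is the real content being imported here.
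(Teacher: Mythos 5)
The paper states this result without proof, citing it directly as Proposition 2.1 of Haagerup's paper, and your argument is exactly the derivation given there: reduce to the bilinear form $V(a,b)=\langle T(a),b\rangle$, apply the noncommutative Grothendieck inequality to obtain the states $f_1,f_2,g_1,g_2$, and run the GNS-type construction to produce the Hilbert space factorization. Your proof is correct and coincides with the cited source's approach, with the genuine content concentrated (as you say) in the Grothendieck inequality itself.
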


\begin{theorem}\cite[Theorem 3.2]{puglisi}\label{b2}
 Let $T:X\ra Y$ be a bounded linear operator between Banach spaces $X$
and $Y$. Then $T$ is strong*-norm continuous if and only if $T$
factors through a Hilbert space.
\end{theorem}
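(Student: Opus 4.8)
The plan is to prove both implications directly from the definition of the strong*-topology, which is generated by the seminorms $p_S(x) = \|Sx\|$ as $S$ ranges over the bounded linear maps from $X$ into Hilbert spaces. The sufficiency is the easy half. Suppose $T = v \circ u$ factors through a Hilbert space $H$, with $u : X \ra H$ and $v : H \ra Y$ bounded. Then $p_u$ is, by definition, one of the seminorms generating the strong*-topology, and $\|Tx\| = \|v(u(x))\| \le \|v\|\, p_u(x)$ for every $x \in X$. Hence if a net $x_\alpha \ra x$ in the strong*-topology, then $p_u(x_\alpha - x) \ra 0$, so $\|Tx_\alpha - Tx\| \le \|v\|\, p_u(x_\alpha - x) \ra 0$; that is, $T$ is strong*-norm continuous.

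For the necessity, assume $T$ is strong*-norm continuous, so that $x \mapsto \|Tx\|$ is a continuous seminorm on $X$ equipped with the strong*-topology. Since this topology is generated by the family $\{p_S\}$, a standard fact from the theory of locally convex spaces supplies finitely many bounded maps $S_1, \dots, S_n$ into Hilbert spaces $H_1, \dots, H_n$ and a constant $C > 0$ with
$$\|Tx\| \le C \max_{1 \le i \le n} \|S_i x\| \quad \text{for all } x \in X.$$
I would then assemble these into a single operator $S : X \ra H := H_1 \oplus \cdots \oplus H_n$ (the Hilbert space direct sum), given by $Sx = (S_1 x, \dots, S_n x)$, for which $\max_i \|S_i x\| \le \|Sx\|$, and therefore $\|Tx\| \le C \|Sx\|$ for all $x \in X$.

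This domination is exactly what produces the factorization: it forces $\ker S \seq \ker T$, so the assignment $Sx \mapsto Tx$ is a well-defined linear map on the range of $S$, bounded there by $C$. Extending it by continuity to $\ol{\mathrm{ran}\, S} \seq H$, and by zero on the orthogonal complement, yields a bounded operator $V : H \ra Y$ with $T = V \circ S$, i.e.\ $T$ factors through the Hilbert space $H$.

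The main obstacle I anticipate lies in the necessity direction, specifically the extraction of the finite dominating family $S_1, \dots, S_n$. This rests on the locally convex principle that a seminorm continuous for a topology defined by a family of seminorms is dominated, up to a constant, by a finite maximum of members of that family. Once this is in hand, the passage from the maximum to the single seminorm $\|S \cdot\|$ and the construction of $V$ are routine, the only delicate point being the well-definedness of $V$, which is guaranteed precisely by the kernel inclusion $\ker S \seq \ker T$.
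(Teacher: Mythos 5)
The paper does not prove this statement; it is quoted verbatim as \cite[Theorem 3.2]{puglisi}, so there is no internal proof to compare against. Your argument is correct and is essentially the expected one: the easy direction follows since $x\mapsto\|u(x)\|$ is one of the generating seminorms, and for the converse the standard locally convex fact gives $\|Tx\|\le C\max_i\|S_ix\|$, after which bundling the $S_i$ into $S:X\ra H_1\oplus\cdots\oplus H_n$ and factoring through $\ol{\mathrm{ran}\,S}$ via the kernel inclusion $\ker S\seq\ker T$ is sound.
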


\begin{theorem}\cite[Theorem 3.6]{puglisi}\label{b1}
Let $A$ be a $C^*$-algebra, $B$ be a $C^*$-subalgebra of $A$ and
$Y$ be any Banach space. Then every $2\text{-}C^*$-summing operator
$T:B\rightarrow Y$ extends to a norm preserving $2\text{-}C^*$-summing
operator $\widetilde{T}: A \ra Y$.
\end{theorem}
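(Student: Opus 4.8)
The plan is to represent a $2\text{-}C^*$-summing operator as a map that is continuous for one canonical strong*-type seminorm -- namely the one induced by a single state on the algebra -- and then to exploit the fact that such seminorms arise from maps into Hilbert spaces, where extension can be performed by orthogonal projection. The point to keep in mind throughout is that a bounded map into a Hilbert space need \emph{not} extend from a subspace $B$ to the ambient algebra $A$ (Hilbert spaces are not injective as Banach spaces), so the $2\text{-}C^*$-summing hypothesis must be used essentially; it is precisely what forces the relevant Hilbert space to be the GNS space of a state defined on all of $A$, inside which $B$ sits as a genuine closed subspace.

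First I would prove a Pietsch-type domination. Writing $k$ for the $2\text{-}C^*$-summing constant of $T$ and $|a|^2 = (aa^*+a^*a)/2$, the defining inequality can be rewritten, using $\|\sum_i |a_i|^2\| = \sup_{\varphi}\varphi(\sum_i |a_i|^2)$ over the (weak*-compact, convex) state space $S(B)$, as the assertion that the convex cone of functions $\varphi \mapsto \sum_i(\|T(a_i)\|^2 - k^2\,\varphi(|a_i|^2))$ on $S(B)$ contains no strictly positive function. A Hahn--Banach separation from the open cone of strictly positive continuous functions, followed by passing to the barycenter of the resulting probability measure, then yields a single state $\varphi_0$ on $B$ with $\|T(b)\| \le k\,\varphi_0(|b|^2)^{1/2}$ for all $b\in B$. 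I would then extend $\varphi_0$, by Hahn--Banach, to a state $\widetilde\varphi$ on $A$.

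With $\widetilde\varphi$ in hand, the symmetrized sesquilinear form $\langle a,c\rangle = \tfrac12\bigl(\widetilde\varphi(c^*a)+\widetilde\varphi(ac^*)\bigr)$ is positive and Hermitian, so a GNS-type construction yields a Hilbert space $H$ and a linear map $\eta:A\to H$ with $\|\eta(a)\|^2 = \widetilde\varphi(|a|^2)$; by \Cref{2-summ-strong} this $\eta$ is exactly a strong*-norm continuous (hence $2\text{-}C^*$-summing) map, realizing concretely the abstract Hilbert-space factorization of \Cref{b2}. The domination gives $\|T(b)\|\le k\,\|\eta(b)\|$, so $T$ factors as $T = S\circ(\eta|_B)$ for a well-defined bounded $S:\overline{\eta(B)}\to Y$ with $\|S\|\le k$. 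Now let $P:H\to M:=\overline{\eta(B)}$ be the orthogonal projection and set $\widetilde T := (S\circ P)\circ\eta : A\to Y$. Since $P$ fixes $M$ pointwise, $\widetilde T|_B = T$; and for every $a\in A$ one has $\|\widetilde T(a)\|\le \|S\|\,\|\eta(a)\| \le k\,\widetilde\varphi(|a|^2)^{1/2}$, so summing over self-adjoint families recovers the $2\text{-}C^*$-summing inequality with the same constant $k$. As restriction already forces the summing constant of $\widetilde T$ to be at least that of $T$, the extension is norm preserving.

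The main obstacle is the domination step: unlike the classical $2$-summing case the form $|a|^2$ is not linear in $a$, so the separation must be run on the real space of self-adjoint elements with the affine functions $\varphi\mapsto\varphi(a^2)$, and one must verify that the symmetrized GNS form is the correct object both for controlling general (non-self-adjoint) elements and for making $\eta$ genuinely strong*-norm continuous in the sense of \Cref{2-summ-strong}. A secondary technical point is the non-unital case, where $S(B)$ and the extension of $\varphi_0$ are best handled through the unitization (or an approximate identity); once a dominating state is available on all of $A$, the projection argument producing the extension is purely formal.
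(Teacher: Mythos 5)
The paper offers no proof of this statement---it is quoted verbatim from \cite[Theorem 3.6]{puglisi}---so your proposal has to be judged on its own terms. The architecture you choose (Pietsch-type domination by a single state via Hahn--Banach separation on the weak*-compact state space, Hahn--Banach extension of that state to $A$, the symmetrized GNS construction with $\|\eta(a)\|^2=\widetilde\varphi(|a|^2)$, factorization $T=S\circ(\eta|_B)$, and extension by composing with the orthogonal projection onto $\overline{\eta(B)}$) is the standard and essentially correct route, and your remarks about where the $2$-$C^*$-summing hypothesis is genuinely used are on target.

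There is, however, one step that fails as written: the claim that $\|S\|\le k$ on all of $\overline{\eta(B)}$. The domination produced by your separation argument reads $\|T(b)\|\le k\,\varphi_0(|b|^2)^{1/2}$ only for \emph{self-adjoint} $b$ (as you note yourself, the separation runs over self-adjoint families); for general $b=b_1+ib_2$, the triangle inequality together with $|b|^2=b_1^2+b_2^2$ yields only $\|T(b)\|\le\sqrt{2}\,k\,\varphi_0(|b|^2)^{1/2}$, and the factor $\sqrt{2}$ is attained: take $B=\mathbb{C}^2$ and $T(x,y)=(x+iy)/\sqrt{2}$, for which $k=1$, the unique dominating state is $\varphi_0=\frac12(\delta_1+\delta_2)$, and $b=(1,-i)$ gives $\|T(b)\|=\sqrt{2}=\sqrt{2}\,\varphi_0(|b|^2)^{1/2}$. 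Hence $\|S\|$ can equal $\sqrt{2}k$, your chain $\|\widetilde T(a)\|\le\|S\|\,\|\eta(a)\|\le k\,\widetilde\varphi(|a|^2)^{1/2}$ is unjustified, and the argument as written only produces an extension with summing constant at most $\sqrt{2}k$, which is not norm preserving. The gap is repairable inside your own framework: since $\widetilde\varphi$ is Hermitian, $\langle\eta(a),\eta(c)\rangle$ is real for self-adjoint $a,c$, so $\eta(A_{sa})$ and $i\,\eta(A_{sa})$ are orthogonal for the real inner product $\mathrm{Re}\langle\cdot,\cdot\rangle$; consequently the orthogonal projection $P$ onto the complex subspace $\overline{\eta(B)}=\overline{\eta(B_{sa})}\oplus i\,\overline{\eta(B_{sa})}$ carries $\eta(A_{sa})$ into $\overline{\eta(B_{sa})}$, where $S$ genuinely has norm $\le k$. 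Because the $2$-$C^*$-summing inequality quantifies only over self-adjoint families, this recovers the constant $k$ for $\widetilde T=S\circ P\circ\eta$. Finally, be explicit that the ``norm'' preserved is the summing constant: in the example above $\|T\|=\sqrt{2}>k$, so preserving the summing constant does not by itself preserve the operator norm, which is the quantity actually consumed in the proof of \Cref{obp-injective}.
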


Now, with these results in hand, we are ready to prove our result
regarding the injectivity of Banach space projective norm in the $C^*$ set-up.

\begin{theorem} \label{obp-injective}
Let $A_1$ and $B_1$ be  $C^*$-subalgebras of $C^*$-algebras $A$
and $B$, respectively. Then the identity map on $A_1\ot B_1$ extends
to an isometric $*$-algebra map from $A_1\obp B_1$ onto the closed
$*$-subalgebra $ \ol{A_1 \ot B_1} \subseteq A\obp B$.
\end{theorem}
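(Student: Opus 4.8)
The plan is to reduce the isometry to a Hahn--Banach-type extension of bilinear forms via duality, and then to read off the algebraic and surjectivity statements by a density argument. Since the inclusions $A_1 \hookrightarrow A$ and $B_1 \hookrightarrow B$ are isometries, functoriality of $\obp$ immediately makes the extension $J : A_1 \obp B_1 \ra A \obp B$ of the identity on $A_1 \ot B_1$ contractive, so the only substantial point is the reverse inequality $\|u\|_{A_1 \obp B_1} \leq \|J(u)\|_{A \obp B}$ for $u \in A_1 \ot B_1$.

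To obtain this, I would use the standard identifications $(A_1 \obp B_1)^* \cong \B(A_1, B_1^*)$ and $(A \obp B)^* \cong \B(A, B^*)$ (bounded bilinear forms, viewed as operators into the dual). Then $\|u\|_{A_1 \obp B_1}$ is the supremum of $|\phi(u)|$ over bilinear forms $\phi$ on $A_1 \times B_1$ with $\|\phi\| \leq 1$, and the reverse inequality follows once one knows that \emph{every} bounded bilinear form $\phi$ on $A_1 \times B_1$ extends to a bounded bilinear form $\Phi$ on $A \times B$ with $\|\Phi\| = \|\phi\|$: indeed $|\phi(u)| = |\Phi(J(u))| \leq \|J(u)\|_{A \obp B}$, and taking the supremum over $\phi$ gives the claim.

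The hard part will be precisely this norm-preserving bilinear extension, and it is exactly the failure of such an extension for general Banach spaces that accounts for the non-injectivity of $\obp$; the $C^*$-structure is what rescues the situation. I would carry it out in two symmetric steps using the factorization machinery collected above. Regarding $\phi$ as an operator $T : A_1 \ra B_1^*$ with $\|T\| = \|\phi\|$, \Cref{b3} factors $T$ through a Hilbert space, whence \Cref{b2} shows $T$ is strong$^*$-norm continuous and \Cref{2-summ-strong} shows $T$ is $2\text{-}C^*$-summing. Applying \Cref{b1} to the pair $A_1 \seq A$ with target $B_1^*$ yields a norm-preserving extension $\widetilde T : A \ra B_1^*$, i.e.\ a bounded bilinear form on $A \times B_1$ of the same norm extending $\phi$. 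Viewing this form dually as an operator $S : B_1 \ra A^*$ and repeating the same three results for the pair $B_1 \seq B$ with target $A^*$ produces $\widetilde S : B \ra A^*$, hence a bounded bilinear form $\Phi$ on $A \times B$ with $\|\Phi\| = \|\phi\|$ restricting to $\phi$ on $A_1 \times B_1$. This two-sided factor-and-extend step is where all the work sits.

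Granting the isometry, the rest is routine. The map $J$ is the identity on the dense $*$-subalgebra $A_1 \ot B_1$, on which the multiplication and involution coincide with those inherited from $A \obp B$; since $J$ is isometric and these operations are continuous on the Banach $*$-algebras $A_1 \obp B_1$ and $A \obp B$, a density argument upgrades $J$ to a $*$-algebra homomorphism. Being isometric, $J$ has complete, hence closed, range, which contains $A_1 \ot B_1$ and is contained in $\ol{A_1 \ot B_1}$; therefore it equals $\ol{A_1 \ot B_1} \seq A \obp B$, giving the asserted isometric $*$-isomorphism onto the product $*$-subalgebra.
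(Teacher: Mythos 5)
Your proposal is correct and follows essentially the same route as the paper: the paper invokes Ryan's duality criterion (every bounded operator $A_1\ra B_1^*$ must extend norm-preservingly to $A$) and then runs exactly your chain \Cref{b3} $\Rightarrow$ \Cref{b2} $\Rightarrow$ \Cref{2-summ-strong} $\Rightarrow$ \Cref{b1}, handling the second variable by symmetry just as you do. The only difference is cosmetic: you unpack the duality criterion and the final density argument for the $*$-homomorphism property, which the paper leaves implicit.
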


\begin{proof}
By \cite[Corollary 2.12]{ryan}, $A_1 \obp B_1$ is a subspace of $A
\obp B_1$ if and only if every bounded operator from $A_1$ into
$B_1^*$ extends to an operator of the same norm from $A$ into
$B_1^*$. Let $T$ be one such operator from $A_1$ into $B_1^*$. Then,
by Theorem \ref{b3}, $T$ can be factored through a Hilbert space,
which, by Theorem \ref{b2}, is strong*-norm continuous. By
\Cref{2-summ-strong}, $T$ is $2\text{-}C^*$-summing, therefore, by
Theorem \ref{b1}, $T$ extends to a norm preserving
$2\text{-}C^*$-summing operator $\tilde{T}: A \ra B_1^*$. Thus $A_1
\obp B_1$ is a closed subspace of $A \obp B_1$.  Since $\obp$ is symmetric,
it also follows that $A \obp B_1$ is a closed subspace of
$A \obp B$.
\end{proof}

\section{Ideal structure of $A \obp B$}
 If $\alpha$ is either the Haagerup tensor product or the operator
 space projective tensor product, $A$ and $B$ are $C^*$-algebras with
 $A$ topologically simple, then by \cite[Proposition 5.2]{ass}, and by
 \cite[Theorem 3.8]{JK-edin}, it is known that every closed ideal of
 the Banach algebra $A \ot^\alpha B$ is a product ideal of the form $
 A \ot^\alpha J$ for some closed ideal $J$ in $B$. For $\omin$, we
 obtained, in \cite{GJ}, the following analogue of above results:

\begin{theorem}\cite{GJ}\label{simple-ideal}
 Let $A$ and $B$ be $C^*$-algebras where $A$ is topologically
 simple. If either $A$ is exact or $B$ is nuclear, then every closed
 ideal of the $C^*$-algebra $A \omin B$ is a product ideal of the form
 $ A \omin J$ for some closed ideal $J$ in $B$.
 \end{theorem}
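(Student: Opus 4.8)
The plan is to realize the ideal via slice maps and then exploit topological simplicity. Write $R_\phi\colon A\omin B\to B$, $R_\phi(a\ot b)=\phi(a)b$, for the right slice map associated with $\phi\in A^*$. Let $K$ be a nonzero closed ideal of the $C^*$-algebra $A\omin B$ and set
$$ J:=\ol{\operatorname{span}}\{R_\phi(k): \phi\in A^*,\ k\in K\}\subseteq B. $$
First I would check that $J$ is a closed two-sided ideal of $B$: this is a short computation using $R_\phi\big((1\ot b')k\big)=b'R_\phi(k)$ (and the analogous identity on the right), together with the fact that $(1\ot b')k\in K$, where $1$ is taken from an approximate unit of $B$ in the non-unital case. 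Since $\omin$ is injective, $A\omin J$ sits isometrically inside $A\omin B$ as the product ideal $\ol{A\ot J}$, and the goal becomes the identity $K=A\omin J$.

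For the inclusion $K\seq A\omin J$ I would invoke the exactness-type property \Cref{kernel-phi-ot-psi}, which is available precisely because $A$ is exact or $B$ is nuclear: it identifies $A\omin J$ with the Fubini product $\{x\in A\omin B: R_\phi(x)\in J \text{ for all } \phi\in A^*\}$. As $R_\phi(k)\in J$ for every $k\in K$ and every $\phi$ by the very definition of $J$, each $k$ lies in this Fubini product, giving $K\seq A\omin J$ at once.

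The reverse inclusion $A\omin J\seq K$ is where topological simplicity enters, and it reduces to showing $a\ot c\in K$ for every $a\in A$ and every generator $c=R_\phi(k)$ of $J$. Here I would fix such a $c$ and consider $E_c:=\{a\in A: a\ot c\in K\}$. A one-line check shows $E_c$ is a closed two-sided ideal of $A$ (it is clearly linear and norm-closed, and $(x\ot 1)(a\ot c)(y\ot 1)=xay\ot c\in K$), so by topological simplicity $E_c$ is either $0$ or $A$; it therefore suffices to produce a single nonzero element of $E_c$. To this end I would take $\phi$ to be a pure state---reducing the general functional to this case by weak$^*$-density of pure states together with the weak$^*$-to-weak continuity of $\phi\mapsto R_\phi(k)$ and the weak-closedness of $J$---pass to the (faithful, since $A$ is simple) irreducible GNS representation of $\phi$ and a faithful representation of $B$, and use Kadison's transitivity theorem to approximate the rank-one ``slicing'' map $x\mapsto\phi(x)a$ by inner elementary operators $z\mapsto\sum_i(x_i\ot 1)z(y_i\ot 1)$ with $x_i,y_i\in A$; applied to $k$, these land arbitrarily close to $a\ot c$ inside the closed ideal $K$.

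I expect the last step to be the main obstacle. Approximating the compression/slicing map by inner elementary operators built from $A$ is exactly the point at which \emph{primeness} of $A$ (a consequence of topological simplicity) is indispensable, and the delicate issue is upgrading the transitivity argument from strong-operator to norm approximation, which I would handle by decomposing $K$ as an intersection of primitive ideals of $A\omin B$ and arguing one primitive quotient at a time. The simplicity of $A$ serves only to force the relevant ideal of $A$ to be all of $A$, while the hypothesis that $A$ is exact or $B$ is nuclear is what legitimizes the Fubini identity used above; dropping it invalidates the identification of $A\omin J$ with the Fubini product and the whole scheme collapses.
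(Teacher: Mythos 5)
Your outline is essentially correct, but it is \emph{not} the route taken here: the paper imports this statement from \cite{GJ}, and the argument there (mirrored in this paper's proof of the $\obp$-analogue, \Cref{obp-ideal}) is a Zorn's-lemma argument --- one considers the family of closed ideals $J$ of $B$ with $A\omin J\subseteq K$, shows it is nonempty because $K$ contains a nonzero elementary tensor, takes a maximal element, passes to $A\omin(B/J)$, and derives a contradiction from any surviving elementary tensor in the image. You instead \emph{construct} $J$ explicitly as the closed span of the slices $R_\phi(K)$ and prove the two inclusions separately, which removes Zorn's lemma and makes the ideal concrete. Both routes rest on the same two pillars: (i) the identification of $\ker(\mathrm{id}\ot\pi_J)$ --- equivalently your Fubini product $\{x\colon R_\phi(x)\in J\ \forall\,\phi\}$ --- with $A\omin J$, which is exactly where exactness of $A$ or nuclearity of $B$ is consumed; note, however, that \Cref{kernel-phi-ot-psi} as stated concerns $\obp$ and holds unconditionally for Banach-space quotient maps, so for $\omin$ you must instead cite the slice-map property (Wassermann/Kirchberg), not that proposition; and (ii) a Kadison-transitivity lemma in the style of \cite[Lemma 4.3]{ass}: for a pure state $\phi$, whose irreducible GNS representation is automatically faithful by simplicity, and finitely many $a_j\in A$, there is $y\in A$ with $\|y\|\le 1$, $\phi(y^*y)$ near $1$, and $\|y^*a_jy-\phi(a_j)y^*y\|<\epsilon$; applying $(y^*\ot 1)(\cdot)(y\ot 1)$ to a finite-rank approximant of $k$ places $y^*y\ot R_\phi(k)$ in $K$, and simplicity then forces your ideal $E_c$ to be all of $A$. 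Consequently the ``main obstacle'' you flag is already settled by the standard finite-set, norm-level form of transitivity; the proposed detour through a primitive-ideal decomposition of $K$ and an SOT-to-norm upgrade is unnecessary, and what that step really uses is faithfulness of the irreducible representation (i.e., simplicity) rather than primeness per se. In sum, your argument is a legitimate and somewhat more constructive alternative, at the cost of invoking the Fubini-product machinery explicitly where the paper only needs exactness of the quotient sequence.
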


It turns out that the ideal structure of the Banach space projective
tensor product $A \obp B$ also follows on similar lines as in
\Cref{simple-ideal}, provided its ingredients can be established - see
\Cref{obp-ideal} below.  The steps involved are very much on the lines
of \cite{ass} and \cite{JK-edin}, and we avoid mentioning them at
every instance.

Since $\| \cdot \|_{\min}$ and $\|\cdot\|_{{h}}$ are cross-norms and
$\| \cdot \|_{\lambda} \leq \|\cdot \|_{\min} \leq \| \cdot \|_{{h}}
\leq \| \cdot \|_{\gamma}$, where $\| \cdot \|_{\lambda}$ is the
Banach space injective tensor norm, by the Remark on Page $97$ of
\cite{haag}, we have the following crucial embeddings.

\begin{prop}\label{obp-min}\cite{haag}
 Let $A$ and $B$ be $C^*$-algebras. Then the identity map on $A \ot B$
 extends to a contractive injective $*$-homomorphism $i_{\min} : A \obp B \ra A
 \omin B$ and injective homomorphisms $i_{h}: A \obp B \ra A \oh B$
 and $j_{\min}: A \oh B \ra A \omin B$. Also, the  diagram
 \begin{equation}\label{min-h-min}
   \xymatrix{
     A \obp B \ar[dr]^{i_{\min}} \ar[rr]^{i_h}& & A \oh B\ar[dl]^{j_{\min}} \\
     & A \omin B &
     }
   \end{equation}
commutes.
\end{prop}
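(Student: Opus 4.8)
The plan is to obtain all three maps by extending the identity on $A \ot B$, and then to reduce the whole statement to a single injectivity assertion. Since $\|\cdot\|_{\min}$ and $\|\cdot\|_{h}$ are cross norms and $\|\cdot\|_{\lambda} \le \|\cdot\|_{\min} \le \|\cdot\|_{h} \le \|\cdot\|_{\gamma}$ on $A \ot B$, the identity on $A \ot B$ is contractive as a map $(A \ot B, \|\cdot\|_{\gamma}) \ra (A \ot B, \|\cdot\|_{\min})$, as a map $(A \ot B, \|\cdot\|_{\gamma}) \ra (A \ot B, \|\cdot\|_{h})$, and as a map $(A \ot B, \|\cdot\|_{h}) \ra (A \ot B, \|\cdot\|_{\min})$. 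By density and completeness each extends uniquely to a contraction between the completions, yielding $i_{\min}$, $i_{h}$ and $j_{\min}$.

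Next I would record the algebraic features, all routine consequences of density. Each of the three maps restricts to the identity on the dense subalgebra $A \ot B$, and multiplication is jointly continuous in each of $A \obp B$, $A \oh B$ and $A \omin B$, so the extensions are Banach algebra homomorphisms. The involution is isometric on $A \obp B$ (a Banach $*$-algebra) and on $A \omin B$ (a $C^*$-algebra), whence $i_{\min}$ also intertwines the involutions and is a $*$-homomorphism; since the involution is not isometric for $\|\cdot\|_{h}$ and $A \oh B$ is merely a Banach algebra, $i_{h}$ and $j_{\min}$ are asserted only as homomorphisms. Finally, \eqref{min-h-min} commutes because $i_{\min}$ and $j_{\min} \circ i_{h}$ are continuous and agree on the dense subspace $A \ot B$.

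The substantive point is injectivity, where the norm comparison alone does not suffice: a contraction between two completions that restricts to the identity on a common dense subspace may well have nontrivial kernel. I would first invoke the standard injectivity of $j_{\min} : A \oh B \ra A \omin B$, i.e. that the Haagerup tensor product embeds into the minimal $C^*$-tensor product; concretely, for faithful representations $A \seq \B(H)$ and $B \seq \B(K)$ both tensor products realise $\sum a_i \ot b_i$ as the same operator on $H \ot K$, and the abstract-to-concrete map on $A \oh B$ is injective (see \cite{ERbook, ass}). Granting this, the commuting triangle reduces everything to injectivity of $i_{h}$: injectivity of $j_{\min}$ forces $\ker i_{\min} = \ker i_{h}$, so that $\ker i_{h} = \{0\}$ yields injectivity of both $i_{h}$ and $i_{\min}$ simultaneously.

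It thus remains to show that $i_{h}$ is injective, equivalently that the $\|\cdot\|_{h}$-continuous functionals separate the points of $A \obp B$; this is the content of the Remark on p.~97 of \cite{haag} and is the step I expect to be the main obstacle. One cannot simply transport the full dual $(A \obp B)^* = \B(A, B^*)$ into $(A \oh B)^*$: even a bilinear form factoring through a Hilbert space need not be $\|\cdot\|_{h}$-continuous, because such a factorisation controls the form by the row and column quantities $\|\sum_i a_i a_i^*\|$ and $\|\sum_i a_i^* a_i\|$ together, rather than by the one-sided combination built into $\|\cdot\|_{h}$. The delicate task, which I would carry out following \cite{haag} by combining the factorisation of bounded bilinear forms on $A \times B$ through a Hilbert space (\Cref{b3}) with the Grothendieck-type estimates available for $C^*$-algebras, is to produce for each nonzero $u \in A \obp B$ a genuinely $\|\cdot\|_{h}$-continuous functional that does not annihilate $u$; this is exactly what forces $\ker i_{h} = \{0\}$ and completes the proof.
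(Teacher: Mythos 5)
Your proposal is correct and follows essentially the same route as the paper, which itself offers no proof beyond observing the norm inequalities $\|\cdot\|_{\lambda}\le\|\cdot\|_{\min}\le\|\cdot\|_{h}\le\|\cdot\|_{\gamma}$ and citing the Remark on p.~97 of \cite{haag} for the substantive injectivity; you supply the routine density arguments in more detail and correctly isolate the same hard step, deferring it to \cite{haag} just as the paper does. The only (harmless) difference is the direction of your reduction: you aim to prove $i_h$ injective and deduce injectivity of $i_{\min}$ via $j_{\min}\circ i_h=i_{\min}$, whereas Haagerup's remark yields injectivity of $i_{\min}$ directly, from which injectivity of $i_h$ is immediate from the same factorization --- the two are equivalent once $j_{\min}$ is known to be injective.
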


For a closed ideal $I$ in $A \obp B$, let $I_{\min} := \ol{i_{\min}(I)}
\subseteq A \omin B$. On the lines of \cite[Lemma 4.2]{ass}, Kumar and
Rajpal   \cite{kr-1} proved the following useful result.
\begin{prop}\label{1-tensor-1}\cite{kr-1}
Let $M$ and $N$ be von Neumann algebras and let $I$ be a closed ideal
in $M \obp N$. If $1 \ot 1 \in I_{\min} \subseteq M \omin
N$, then $1 \ot 1 \in I$, and, in particular, $I$ equals $M \obp N$.
  \end{prop}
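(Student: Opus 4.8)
The plan is to convert the hypothesis into a statement about the $C^*$-algebra $M\omin N$, and then to exploit the abundance of unitaries in the von Neumann algebras $M$ and $N$ to promote an \emph{approximate} identity for $I$ (in the minimal norm) to the genuine identity $1\ot 1$ (in the projective norm). First I would record that $I_{\min}=\ol{i_{\min}(I)}$ is a closed two-sided ideal of the $C^*$-algebra $M\omin N$: since $i_{\min}$ is a $*$-homomorphism with dense range (\Cref{obp-min}), for $a\in M\omin N$ and $x\in I_{\min}$ one approximates $a$ by $i_{\min}(w_j)$ and $x$ by $i_{\min}(z_k)$ with $z_k\in I$, and $w_j z_k\in I$ forces $ax,\,xa\in I_{\min}$. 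Consequently $1\ot 1\in I_{\min}$ is equivalent to $I_{\min}=M\omin N$, i.e.\ to $i_{\min}(I)$ being \emph{dense} in the minimal norm. Thus for every $\epsilon\in(0,1)$ there is $z\in I$ with $\|i_{\min}(z)-1\ot 1\|_{\min}<\epsilon$; in particular $i_{\min}(z)$ is invertible in $M\omin N$.

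Next I would set up an averaging argument. For $u\in\mathcal U(M)$ and $v\in\mathcal U(N)$ the element $u\ot v$ is a unitary of the Banach $*$-algebra $M\obp N$ with $\|u\ot v\|_\gamma=\|u^*\ot v^*\|_\gamma=1$, so conjugation $w\mapsto (u\ot v)\,w\,(u^*\ot v^*)$ is an isometric inner automorphism of $M\obp N$ that fixes $1\ot 1$ and carries the ideal $I$ onto itself. Because $i_{\min}$ intertwines this conjugation with unitary conjugation in $M\omin N$, every convex combination $z'=\sum_k\lambda_k\,(u_k\ot v_k)\,z\,(u_k^*\ot v_k^*)\in I$ again satisfies $\|i_{\min}(z')-1\ot 1\|_{\min}<\epsilon$. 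I would then run a Dixmier-type averaging over the unitaries $\{u\ot v\}$, whose relative commutant in the von Neumann tensor product $M\,\overline{\otimes}\,N$ is $\Z(M)\,\overline{\otimes}\,\Z(N)$: averaging first over $\mathcal U(M)$ and then over $\mathcal U(N)$ drives $i_{\min}(z)$ towards the center while, by the previous sentence, never leaving the minimal $\epsilon$-ball around $1\ot 1$. This produces elements of $I$ whose minimal images are simultaneously (approximately) central and arbitrarily close to the invertible element $1\ot 1$.

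The step I expect to be the genuine obstacle is the last one: passing from ``$i_{\min}(z')$ is close to $1\ot 1$ in the \emph{minimal} norm'' to ``$1\ot 1$ actually lies in $I$,'' which is closed for the much larger \emph{projective} norm. Mere closeness, or even invertibility, in $M\omin N$ does not transfer back along the contractive injection $i_{\min}$, since the spectrum of $z$ computed in the dense image algebra $M\obp N$ is in general strictly larger; this is precisely why the naive Neumann-series argument fails and why the conclusion needs a genuine extra input. The von Neumann hypothesis is exactly what supplies it: the averaging above concentrates the problem on the central, hence commutative, part, where the isometric embedding of \Cref{obp-injective}, the comparison of the three norms in \Cref{obp-min}, and the factorization-through-a-Hilbert-space of bounded maps into a dual (\Cref{b3}) can be combined to control those functionals $F\in(M\obp B)^{*}$ that vanish on $I$ and to force $F(1\ot 1)=0$ for every such $F$; by Hahn--Banach this yields $1\ot 1\in I$. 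Once this is established, $M\obp N=(1\ot 1)(M\obp N)\subseteq I$ gives $I=M\obp N$, completing the proof.
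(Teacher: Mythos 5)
Your preparatory steps are sound: $I_{\min}$ is indeed a closed ideal of $M\omin N$, so the hypothesis is equivalent to $\|\cdot\|_{\min}$-density of $i_{\min}(I)$, and conjugation by $u\ot v$ is a $\gamma$-isometric automorphism of $M\obp N$ preserving $I$ and the minimal $\epsilon$-ball around $1\ot 1$. But the proof then stops exactly where you say the ``genuine obstacle'' is, and the two devices you offer to overcome it do not work as described. First, the Dixmier averaging: Dixmier's theorem produces convex combinations $z'=\sum_k\lambda_k(u_k\ot v_k)z(u_k\ot v_k)^*$ that converge to a central element $c\in\Z(M)\,\ol{\ot}\,\Z(N)$ \emph{in the operator norm of $M\,\ol{\ot}\,N$}, i.e.\ in $\|\cdot\|_{\min}$. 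The elements $z'$ lie in $I$ but are only approximately central; the limit $c$ is central but there is no reason for it to lie in $M\obp N$ at all, let alone in $I$, since $I$ is closed only for the strictly finer norm $\|\cdot\|_\gamma$ and a $\gamma$-bounded, $\min$-convergent net need not $\gamma$-converge. So the intended reduction to the ideal $I\cap\big(\Z(M)\obp\Z(N)\big)$ is never achieved (that intersection could a priori be $(0)$), and you have merely relocated the min-versus-$\gamma$ obstruction rather than removed it.

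Second, the concluding sentence --- that \Cref{obp-injective}, \Cref{obp-min} and \Cref{b3} ``can be combined to control those functionals $F\in(M\obp N)^*$ that vanish on $I$ and to force $F(1\ot 1)=0$'' --- is an assertion, not an argument; none of those results is brought to bear on a specific functional, and no mechanism is given for why min-norm closeness of $i_{\min}(z')$ to $1\ot 1$ should kill $F(1\ot 1)$. (Even in the genuinely commutative situation the passage is nontrivial: min-density does not give $\gamma$-density, since on commutative $C^*$-algebras Grothendieck's inequality only dominates $\|\cdot\|_\gamma$ by the Haagerup norm, not by $\|\cdot\|_{\min}$; what actually closes the commutative unital case is a character argument --- an element of a proper closed ideal of a commutative unital Banach algebra must vanish at some character, whereas $|z'|\geq 1-\epsilon>0$ everywhere --- which is not what you invoke.) Note also that the paper itself does not prove this proposition; it is imported from \cite{kr-1}, where the argument follows \cite[Lemma 4.2]{ass}, so your sketch would have to stand on its own, and in its present form the decisive noncommutative step is missing.
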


Analogous to \cite[Theorem 4.4]{ass}, we now prove a theorem that,
along with \Cref{obp-injective}, turns out to be the main
ingredient in the study of ideal structure of Banach space projective
tensor product of $C^*$-algebras.

\begin{theorem}\label{a-ot-b}
  Let $A$ and $B$ be $C^*$-algebras and let $I$ be a closed ideal in
  $A \obp B$. If an elementary tensor $a \ot b \in I_{\min}$, then $a
  \ot b \in I$.
  \end{theorem}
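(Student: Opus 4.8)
The plan is to follow the proof of \cite[Theorem 4.4]{ass}, reducing the statement to the unital von Neumann algebra case recorded in \Cref{1-tensor-1}. Since $i_{\min}$ is a contractive $*$-homomorphism with dense range, $I_{\min}=\ol{i_{\min}(I)}$ is a closed two-sided ideal of the $C^*$-algebra $A\omin B$; in particular it is $*$-closed and hereditary, so continuous functional calculus is available inside it. The first reduction is to the case $a,b\ge 0$. Indeed, $a\ot b\in I_{\min}$ forces $(a\ot b)^*(a\ot b)=(a^*a)\ot(b^*b)\in I_{\min}$, and taking the square root of this positive element inside the ideal yields $|a|\ot|b|\in I_{\min}$, where $|a|=(a^*a)^{1/2}\in A$ and $|b|=(b^*b)^{1/2}\in B$. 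Conversely, setting $c_\epsilon:=a\,|a|\,(|a|^2+\epsilon)^{-1}\in A$ and $d_\delta:=b\,|b|\,(|b|^2+\delta)^{-1}\in B$, a short functional-calculus estimate gives $c_\epsilon|a|\to a$ and $d_\delta|b|\to b$ in norm, whence
\[
(c_\epsilon\ot d_\delta)\,(|a|\ot|b|)=(c_\epsilon|a|)\ot(d_\delta|b|)\longrightarrow a\ot b
\]
in the $\gamma$-norm as $\epsilon,\delta\to 0$ (recall $\|\cdot\|_\gamma$ is a cross norm). Hence, once it is known that $|a|\ot|b|\in I$, the ideal property together with the $\gamma$-closedness of $I$ forces $a\ot b\in I$, and it suffices to treat positive $a,b$.

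For positive $a,b$ I would pass to the enveloping von Neumann algebras $M=A^{**}$ and $N=B^{**}$ and, for $\epsilon,\delta>0$, introduce the spectral projections $e_\epsilon=\chi_{[\epsilon,\infty)}(a)$ and $f_\delta=\chi_{[\delta,\infty)}(b)$. In the corner von Neumann algebras $M_\epsilon=e_\epsilon M e_\epsilon$ and $N_\delta=f_\delta N f_\delta$ the compressions of $a$ and $b$ are \emph{invertible}, and $e_\epsilon,f_\delta$ are the respective units. By \Cref{obp-injective} the projective tensor products of all the $C^*$-subalgebras in sight embed isometrically, and by injectivity of $\omin$ the element $a\ot b$ and the ideal $I_{\min}$ can be tracked inside $M_\epsilon\omin N_\delta$; compressing by $e_\epsilon\ot f_\delta$ (a composition of two $\gamma$-contractive multiplications) sends $a\ot b$ to $(ae_\epsilon)\ot(bf_\delta)$, which is invertible in $M_\epsilon\omin N_\delta$. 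An ideal containing an invertible element contains the unit, so the compressed ideal min-contains $e_\epsilon\ot f_\delta=1_{M_\epsilon}\ot 1_{N_\delta}$, and \Cref{1-tensor-1} then forces the compressed ideal to be all of $M_\epsilon\obp N_\delta$. Since $a e_\epsilon\to a$ and $b f_\delta\to b$ in norm, so that $a e_\epsilon\ot b f_\delta\to a\ot b$ in the $\gamma$-norm, it remains to transport the conclusion ``compressed ideal $=$ everything'' back down to the statement $a e_\epsilon\ot b f_\delta\in I$.

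This last descent is the heart of the matter and the main obstacle. Two gaps must be bridged simultaneously: membership in $I_{\min}$ only supplies approximants drawn from $I$ in the \emph{minimal} norm, whereas the desired conclusion $a\ot b\in I$ demands approximants converging in the \emph{projective} norm; and the spectral projections $e_\epsilon,f_\delta$ that manufacture invertibility live in the biduals rather than in $A$ and $B$, so a naive compression escapes $A\obp B$. Following \cite{ass}, the way through is to perform every compression and approximation using functions of $a$ and $b$ themselves — which are genuine elements of $A$ and $B$ — and to keep all the tensor products isometrically identified via the partial injectivity of \Cref{obp-injective}, so that one never leaves the original ideal $I$; the minimal-norm data is then converted into projective-norm data by an exactness-type control of $I_{\min}$ of the kind that underlies the ideal-structure results to follow. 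Carrying out this descent faithfully for $\obp$, where the good behaviour on $C^*$-algebras is exactly what \Cref{obp-injective} provides, is where the real work lies.
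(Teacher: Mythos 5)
Your reduction to positive $a,b$ is fine (it is essentially the last step of \cite[Theorem 4.4]{ass} that the paper also invokes), and your corner-algebra mechanism --- compress by spectral projections $e_\epsilon\ot f_\delta$ in the biduals, observe that the compressed elementary tensor is invertible in $e_\epsilon M e_\epsilon\omin f_\delta N f_\delta$ so that the min-closure of the compressed ideal contains the unit, then invoke \Cref{1-tensor-1} --- is exactly the one the paper uses. But you have stopped at the step you yourself call ``the heart of the matter'': the descent from the bidual corners back to membership in $I$. That step is not a routine verification, and the route you gesture at (performing the compressions with functions of $a$ and $b$ lying in $A$ and $B$, and ``converting minimal-norm data into projective-norm data by an exactness-type control'') is not how the argument goes and would not obviously succeed: there is no general device for upgrading approximation of $a\ot b$ by elements of $I$ in $\|\cdot\|_{\min}$ to approximation in $\|\cdot\|_{\gamma}$, and that is precisely the difficulty the theorem must overcome.

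What the paper actually does is a duality argument. Assuming $a\ot b\notin I$, Hahn--Banach gives $\varphi\in(A\obp B)^*$ with $\varphi(I)=(0)$ and $\varphi(a\ot b)\neq 0$; identifying $\varphi$ with $\Phi\in B(A,B^*)$ and passing to $\Phi^{**}$, one extends $\varphi$ to a functional $\tilde\varphi$ on $A^{**}\obp B^{**}$. The crucial and nontrivial lemma is that $\tilde\varphi$ annihilates the closed ideal $\tilde I$ \emph{generated} by $I$ in $A^{**}\obp B^{**}$: for $z=\sum_i a_i\ot b_i\in I$ one checks that the functionals $x\mapsto\sum_{i\le n}\Phi^{**}(xa_iv)(sb_it)$ are normal, form a Cauchy sequence in $M^*$, vanish on $A$, and hence vanish on $M=\ol{A}^{w^*}$ by the bicommutant theorem. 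Your corner argument then places $e_\epsilon a\ot f_\delta b$ in $\tilde I$ (no descent to $I$ is ever performed for these compressions), and the separate $w^*$-continuity of $\tilde\varphi$ in each leg gives $\tilde\varphi(a\ot b)=\lim\tilde\varphi(e_\epsilon a\ot f_\delta b)=0$, the desired contradiction. This Hahn--Banach/bidual mechanism --- the separating functional, its extension, and the annihilation of $\tilde I$ --- is entirely absent from your proposal, so as written the proof is incomplete at its central point.
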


\begin{proof}
We first prove for $a, b \geq 0$.  Suppose $a \ot b \in I_{\min}$ and
is not in $I$. By Hahn-Banach Theorem, there exists a $\varphi \in (A
\obp B)^*$ such that $\varphi (I) = (0)$ and $\varphi (a \ot b ) \neq
0$. It is well known that $(A \obp B)^*$ can be identified canonically
with $B(A, B^*)$, the space of bounded linear maps from $A$ into $B^*$
- see \cite[$\S 2$, page 24]{ryan}. In particular, there exists a
$\Phi \in B(A, B^*)$ such that $\varphi (x \ot y) = \Phi(x)(y)$ for
all $x \in A$ and $ y \in B$. Note that $\Phi^{**}: A^{**} \ra
B^{***}$ is $w^*$-$w^*$continuous and satisfies $\| \Phi^{**} \| =
\|\Phi\| = \| \varphi \|$. Further, the association $A^{**} \ot B^{**}
\ni u \ot v \mapsto \Phi^{**}(u)(v) \in \C$ extends linearly to a
continuous functional on $A^{**} \obp B^{**}$, say,
$\tilde{\varphi}$. Since $A \obp B \subseteq A^{**} \obp B^{**}$
(\cite[Corollary 2.14]{ryan} or \Cref{obp-injective}) ,
$\tilde{\varphi}$ extends $\varphi$ and also $\| \tilde{\varphi}\| =
\| \varphi\|$.

Now, consider the enveloping von Neumann algebras $M := A^{**}, N :=
B^{**}$, and let $ \tilde{I}$ be the closed ideal in $M \obp N$
generated by $I$. We claim that $\tilde{\varphi}(\tilde{I}) = (0)$ as
well. For this, it is enough to show that
\begin{equation}\label{verify}
  \tilde{\varphi} ( (u \ot s) z (v \ot t)) = 0   
\end{equation}
$ \mathrm{for\ all}\ z \in I, u, v \in M\ \mathrm{ and} \ s, t \in N$.
To begin with, let $z \in I \subseteq A \obp B$, $u \in M, v \in A$
and $s, t \in B$. By \cite[Proposition 2.8]{ryan}, there exist bounded
sequences $\{a_i\} \subset A$ and $\{ b_i\} \subset B$ such that $z =
\sum_{i=1}^\infty a_i \ot b_i$. For each $n \geq 1$, consider
$\omega_n \in M^*$ given by $$\omega_n (x) = \sum_{i = 1}^n \Phi^{**}
(x a_i v) ( s b_i t),\ x \in M.$$ Since $\Phi^{**}$ is $w^*$-$w^*$
continuous, $\omega_n$ is $w^*$-continuous, i.e., $\omega_n \in M_*$,
the predual of $M$, for all $n \geq 1$. Also, for $m < n$, we observe that
  \begin{eqnarray*}
  \|\omega_n (x) - \omega_m(x)\| & = & \left\| \tilde{\varphi} \left((x \ot s)
  \left(\sum_{i= m+1}^n a_i \ot b_i\right)(v \ot t)\right)\right\| \\
  & \leq & \|\varphi\| \|x\| \|s\| \left\| \sum_{i=
    m+1}^n a_i \ot b_i\right\|_\gamma  \|v\| \|t\|
\end{eqnarray*}
for all $x \in M$. In particular, $\{\omega_n\}$ is a Cauchy sequence
in $M^*$ with a limit, say, $\omega \in M^*$. Given the actions of
$\omega_n$'s on $M$, the obvious candidate for $\omega$ is given by
$\omega (x) = \sum_{i = 1}^\infty \Phi^{**} (x a_i v) ( s b_i t)$ for
all $ x \in M$. In particular, $\omega \in M_*$. Since, $\varphi(I) =
0$ and $z \in I$, we easily see that $ \sum_{i= 1}^\infty{\varphi} (x
a_iv \ot sb_i t) = 0$ for all $ x \in A$. This implies that $A
\subseteq \ker(\omega)$ and, since $\omega$ is $w^*$- (equivalently,
$\sigma$-weakly) continuous, by von Neumann's Bicommutant Theorem,
we obtain $M = A^{**} = \ol{A}^{w^*} = \ker (\omega)$, i.e.,
\Cref{verify} holds for all $u \in M, v \in A$ and $s, t \in B$.
Repeating the argument by letting $v, s$ and $t$ vary successively, we
conclude that \Cref{verify} holds $ \mathrm{for\ all}\ z \in I, u, v
\in M\ \mathrm{ and} \ s, t \in N$.

With this observation at our disposal, we now show that $a \ot b$ can
be approximated appropriately by elements of $\tilde{I}$ and deduce
that it is annihilated by $\tilde{\varphi}$ to obtain a contradiction.

For each $\epsilon, \nu > 0$, let $p_\epsilon, q_\nu$ be the spectral
projections in $M$ and $N$ associated to $a$ and $b$ for the closed
intervals $[\epsilon, \infty)$ and $[\nu, \infty)$, respectively. Then
    $p_\epsilon M p_\epsilon $ and $q_\nu N q_\nu$ are von Neumann
    subalgebras of $M$ and $N$ with units $p_\epsilon$ and $q_\nu$,
    respectively. In view of the embedding given in \Cref{obp-injective},
    consider the closed ideal $\tilde{I}_{\epsilon, \nu}:= \tilde{I}
    \cap (p_\epsilon M p_\epsilon \obp q_\nu N q_\nu)$. We claim that
    ${(\tilde{I}_{\epsilon, \nu})}_{\min} \subseteq p_\epsilon M
    p_\epsilon \omin q_\nu N q_\nu$ contains the unit $p_\epsilon \ot
    q_\nu$. This will then, by \Cref{1-tensor-1}, yield
    $\tilde{I}_{\epsilon, \nu} = p_\epsilon M p_\epsilon \obp q_\nu N
    q_\nu$ implying that $p_\epsilon a \ot q_\nu b \in
    \tilde{I}_{\epsilon, \nu}$. In particular, $p_\epsilon a \ot q_\nu
    b \in \tilde{I}$ for all $\epsilon, \nu > 0$. And since $
    p_\epsilon a \stackrel{w^*}{\ra} a$ in $M$ and $q_\nu b
    \stackrel{w^*}{\ra} b$ in $N$, we will obtain $\tilde{\varphi} (a
    \ot b) = \lim_{\epsilon \ra 0} \lim_{\nu \ra 0} \tilde{\varphi}
    (p_\epsilon a \ot q_\nu b) = 0$, giving the desired contradiction.

    Towards the claim, note that, by bounded functional calculus,
    $p_\epsilon a $ and $q_\nu b$ are invertible in $p_\epsilon M
    p_\epsilon $ and $q_\nu N q_\nu$, respectively. If $\{ z_n\} $ is
    a sequence in $I$ such that $\{i(z_n)\}$ converges to $a \ot b$ in
    $I_{\min} \subseteq A \omin B \subset M \omin N$, then, again by
    \Cref{obp-injective}, the sequence $\{(p_\epsilon \ot q_\nu) z_n
    (p_\epsilon \ot q_\nu)\}$ is contained in ${\tilde{I}_{\epsilon,
        \nu}}$ and $ j((p_\epsilon \ot q_\nu) z_n (p_\epsilon \ot
    q_\nu)) \ra p_\epsilon a \ot q_\nu b$ in $ p_\epsilon M p_\epsilon
    \omin q_\nu N q_\nu \subseteq M \omin N$, where $j$ is the
    injective homomorphism from $ p_\epsilon M p_\epsilon \obp q_\nu N
    q_\nu$ into $ p_\epsilon M p_\epsilon \omin q_\nu N q_\nu$
    guaranteed by \Cref{obp-min}. This shows that the invertible
    element $ p_\epsilon a \ot q_\nu b$ belongs to
    ${(\tilde{I}_{\epsilon, \nu})}_{\min}$ and hence the unit $
    p_\epsilon \ot q_\nu$ also belongs to $ {(\tilde{I}_{\epsilon, \nu})}_{\min}$.

    Finally, for arbitrary $a$ and $b$, if $ a \ot b \in I_{\min}$
    then, using above positive case, on the lines of last part of
    proof of \cite[Theorem 4.4]{ass}, it can be shown that $ a\ot b
    \in I$.
\end{proof}
\begin{cor}\label{a-ot-b-h}
  Let $A$ and $B$ be $C^*$-algebras and let $I$ be a closed ideal in
  $A \obp B$. If an elementary tensor $a \ot b \in
  I_{h}:=\overline{i_h(I)}^h \subset A \oh B$, then $a \ot b \in I$.
  \end{cor}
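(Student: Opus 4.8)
The plan is to reduce this to \Cref{a-ot-b} by showing that the elementary tensor $a \ot b$ already lies in $I_{\min} = \ol{i_{\min}(I)} \subseteq A \omin B$. The bridge between the Haagerup and minimal pictures is the commuting diagram \eqref{min-h-min} of \Cref{obp-min}, which gives $i_{\min} = j_{\min} \circ i_h$, where $j_{\min} : A \oh B \ra A \omin B$ is the injective homomorphism extending the identity map on $A \ot B$.

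First I would record two elementary properties of $j_{\min}$. Since $\| \cdot \|_{\min} \leq \| \cdot \|_{h}$, the map $j_{\min}$ is norm-decreasing, hence continuous; and, extending the identity on $A \ot B$, it fixes every elementary tensor, so $j_{\min}(a \ot b) = a \ot b$. Being continuous, $j_{\min}$ carries the $h$-closure of any subset of $A \oh B$ into the $\min$-closure of its image. Applying this to $i_h(I)$ and using $j_{\min}(i_h(I)) = i_{\min}(I)$ from the diagram, I obtain
\[
  j_{\min}(I_{h}) = j_{\min}\big( \ol{i_h(I)}^h \big) \subseteq \ol{j_{\min}(i_h(I))}^{\min} = \ol{i_{\min}(I)}^{\min} = I_{\min}.
\]

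Now the hypothesis $a \ot b \in I_{h}$ yields $a \ot b = j_{\min}(a \ot b) \in j_{\min}(I_{h}) \subseteq I_{\min}$, that is, the elementary tensor lies in $I_{\min}$. An appeal to \Cref{a-ot-b} then gives $a \ot b \in I$, completing the argument. I do not anticipate any genuine obstacle here: the only point that needs care is the compatibility of the two closure operations under $j_{\min}$, which is immediate from its continuity, so the corollary is essentially a formal consequence of \Cref{a-ot-b} together with the commutativity of \eqref{min-h-min}.
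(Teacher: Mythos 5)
Your argument is correct and is essentially the paper's own proof: both reduce to \Cref{a-ot-b} by pushing $a\ot b$ from $I_h$ into $I_{\min}$ via $j_{\min}$ and the commuting diagram \eqref{min-h-min}. You only spell out the continuity of $j_{\min}$ and the inclusion $j_{\min}(I_h)\subseteq I_{\min}$ in slightly more detail than the paper, which states the closure identity $\ol{j_{\min}(I_h)}^{\min}=I_{\min}$ outright.
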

\begin{proof}
By \Cref{min-h-min}, we have $\overline{j_{\min}(I_h)}^{\min} =
I_{\min}$ in $A \omin B$. Since $a \ot b \in I_h$, $a \ot b = j_{\min} (a \ot b) \in
I_{\min}$ as well and, therefore, by \Cref{a-ot-b}, $a \ot b \in I$.
  \end{proof}

\Cref{a-ot-b} also allows us to deduce the following, which will be crucial
in the proof of \Cref{obp-ideal}.

\begin{cor}\label{elementary-tensor}
Let $A$ and $B$ be $C^*$-algebras and $I$ be a non-zero closed ideal
in $A \obp B$. Then $I$ contains a non-zero elementary tensor and a product ideal.
\end{cor}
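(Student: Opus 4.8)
The plan is to reduce the existence of a non-zero elementary tensor in $I$ to the corresponding, essentially $C^*$-algebraic, statement inside the minimal tensor product (where positivity is available), and then to promote such a tensor to a whole product ideal by a routine generation argument. Since $I\neq(0)$ and, by \Cref{obp-min}, the homomorphism $i_{\min}\colon A\obp B\ra A\omin B$ is injective, the closure $I_{\min}=\ol{i_{\min}(I)}$ is a non-zero closed ideal of the $C^*$-algebra $A\omin B$. By \Cref{a-ot-b}, any non-zero elementary tensor lying in $I_{\min}$ automatically lies in $I$; hence it suffices to prove that a non-zero closed ideal $J:=I_{\min}$ of $A\omin B$ contains a non-zero elementary tensor. (One could run the same reduction through $I_h$ and \Cref{a-ot-b-h}, which is closer to the Haagerup treatment of \cite{ass}.)

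To produce such a tensor I would work entirely inside the $C^*$-algebra $A\omin B\seq\B(H_A\ot H_B)$. Picking $0\neq z\in I$, the element $i_{\min}(z^*z)=i_{\min}(z)^*\,i_{\min}(z)$ is a non-zero positive member of $J$, so I may start from some $0\le x\in J$. The products of (even vector) functionals separate the points of $A\omin B$, so there is a pure state $\phi$ on $A$ whose right slice $b:=R_\phi(x)=(\phi\ot\mathrm{id})(x)$ is a non-zero positive element of $B$. Writing $(\pi_\phi,H_\phi,\xi)$ for the GNS data of $\phi$ and using Kadison's transitivity theorem, I would choose $a_n\in A$ with $\|a_n\|\le 1$ and $\pi_\phi(a_n)\ra |\xi\rangle\langle\xi|$ in the strong-$*$ topology; the compressions $(a_n\ot 1)\,x\,(a_n^*\ot 1)$ then lie in $J$ and, upon applying $\pi_\phi\ot\mathrm{id}$, converge to the elementary tensor $|\xi\rangle\langle\xi|\ot b\neq 0$. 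I expect the \emph{main obstacle} to sit precisely here: this convergence is only strong-$*$, so the natural elementary tensor is manufactured in the enveloping von Neumann algebra (the bidual) rather than in the norm-closed ideal $J$, and the genuinely $C^*$-theoretic work is to upgrade it to a norm statement that deposits an honest non-zero $a\ot b$ inside $J$ — exactly the step carried out along the lines of \cite{ass}. (Equivalently, one invokes the known fact that every non-zero closed ideal of a minimal, or Haagerup, tensor product of $C^*$-algebras contains a non-zero elementary tensor.)

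Granting a non-zero elementary tensor $a\ot b\in I$, the passage to a product ideal is the clean part. Since $I$ is an ideal of the $*$-algebra $A\obp B$, the element $(a^*\ot b^*)(a\ot b)=a^*a\ot b^*b$ is again a non-zero elementary tensor in $I$, now with positive factors $c_1:=a^*a$ and $c_2:=b^*b$. Let $K=\ol{Ac_1A}$ and $L=\ol{Bc_2B}$ be the non-zero closed ideals of $A$ and $B$ generated by $c_1$ and $c_2$. For $x_1,y_1\in A$ and $x_2,y_2\in B$ one has $x_1c_1y_1\ot x_2c_2y_2=(x_1\ot x_2)(c_1\ot c_2)(y_1\ot y_2)\in I$; approximating arbitrary $k\in K$ and $l\in L$ by finite sums of such $x_ic_1y_i$ and $x_jc_2y_j$ and using that $\|\cdot\|_\gamma$ is a cross norm (so that $k_n\ot l_m\ra k\ot l$ in $A\obp B$), the closedness of $I$ yields $k\ot l\in I$ for all $k\in K$, $l\in L$. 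Taking the $\gamma$-closure of $K\ot L$ and identifying it, via the isometric embedding of \Cref{obp-injective}, with the product ideal $\ol{K\ot L}\cong K\obp L\seq A\obp B$, I conclude $\ol{K\ot L}\seq I$. As it contains $c_1\ot c_2\neq 0$, this is a non-zero product ideal inside $I$, which completes the proof.
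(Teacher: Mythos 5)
Your argument is correct and follows essentially the same route as the paper: pass to the non-zero closed ideal $I_{\min}\subseteq A\omin B$ via \Cref{obp-min}, invoke the known fact (the paper cites \cite[Proposition 4.5]{ass}) that a non-zero closed ideal of $A\omin B$ contains a non-zero elementary tensor, pull it back into $I$ by \Cref{a-ot-b}, and then generate a product ideal contained in $I$. Your explicit generation argument for $K\obp L\subseteq I$ merely fills in details the paper leaves implicit, and your admitted gap in re-deriving the minimal-tensor-product fact is immaterial since you (like the paper) may simply cite it.
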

\begin{proof}
  From the Diagram \ref{obp-min}, we see that $I_{\min}$ is a non-zero closed ideal in $A \omin
  B$. So, by \cite[Proposition 4.5]{ass}, $I_{\min}$ contains a
  non-zero elementary tensor, say, $a \ot b$, and then, by
  \Cref{a-ot-b}, $a \ot b \in I$. Also, if $J$ and $K$ are the closed
  ideals of $A$ and $B$, generated by $a$ and $b$, respectively, then
  the product ideal $J \obp K$ is contained in $I$.
  \end{proof}
 This immediately yields the following analogue of
\cite[Corollary 4.7]{ass}.
\begin{cor}\label{obp-faithful}
Let $A$ and $B$ be $C^*$-algebras and $D$ be a Banach algebra. If $\pi
: A \obp B \ra D$ is a bounded homomorphism whose restriction to $A
\ot B$ is faithful, then so is $\pi$.
  \end{cor}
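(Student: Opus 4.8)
The plan is to run a short argument by contradiction centered on the kernel of $\pi$. Since $\pi : A \obp B \ra D$ is a bounded homomorphism, its kernel $I := \ker \pi$ is a closed ideal of $A \obp B$. To conclude that $\pi$ is faithful, it suffices to show that $I = (0)$.

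First I would suppose, to the contrary, that $I \neq (0)$. Then, by \Cref{elementary-tensor}, the non-zero closed ideal $I$ must contain a non-zero elementary tensor $a \ot b$. But $a \ot b$ is a non-zero element of the algebraic tensor product $A \ot B$, while $a \ot b \in I = \ker \pi$ forces $\pi(a \ot b) = 0$. This contradicts the hypothesis that the restriction of $\pi$ to $A \ot B$ is faithful. Hence $I = (0)$, that is, $\pi$ is faithful.

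The essential content of the argument is supplied entirely by \Cref{elementary-tensor}, whose force is precisely that a non-zero closed ideal of $A \obp B$ cannot avoid the algebraic tensor product $A \ot B$. Once that structural fact is granted, there is no genuine obstacle to overcome: faithfulness on $A \ot B$ propagates to all of $A \obp B$ automatically, and the whole proof reduces to the above one-line contradiction.
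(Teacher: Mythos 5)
Your argument is correct and is exactly the one the paper intends: the paper derives \Cref{obp-faithful} as an immediate consequence of \Cref{elementary-tensor}, and your contradiction via a non-zero elementary tensor in the closed ideal $\ker\pi$ is precisely that deduction, spelled out.
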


\begin{prop}\label{sum-obp}
Let $A$ and $B$ be $C^*$-algebras. Then a finite sum of closed
product ideals in $A \obp B$ is closed.
   \end{prop}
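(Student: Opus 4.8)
The plan is to reduce the statement to a uniform decomposition estimate and to extract that estimate from the central structure of the biduals, using the isometric embedding $A \obp B \subseteq A^{**} \obp B^{**}$ supplied by \Cref{obp-injective}. Write $W = \sum_{i=1}^n J_i \obp K_i$, where each $J_i$ is a closed ideal in $A$ and each $K_i$ a closed ideal in $B$, so that, by \Cref{obp-injective}, every summand $J_i \obp K_i = \overline{J_i \ot K_i}$ is a closed $*$-subalgebra (indeed ideal) of $A \obp B$. Since the natural map $\bigoplus_{i=1}^n (J_i \obp K_i) \ra A \obp B$, $(x_1, \dots, x_n) \mapsto \sum_i x_i$, from the external direct sum with norm $\sum_i \|\cdot\|_\gamma$ has image $W$, the open mapping theorem shows that $W$ is closed precisely when there is a constant $c > 0$ such that every $w \in W$ admits a decomposition $w = \sum_{i=1}^n w_i$ with $w_i \in J_i \obp K_i$ and $\sum_i \|w_i\|_\gamma \le c \|w\|_\gamma$. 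Thus the entire problem is to produce such uniformly bounded decompositions.

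To organise the blocks, set $M := A^{**}$, $N := B^{**}$, and let $e_i \in \Z(M)$ and $f_i \in \Z(N)$ be the central support projections of $J_i$ and $K_i$, so that $J_i^{**} = e_i M$, $K_i^{**} = f_i N$, and $J_i \obp K_i \subseteq e_i M \obp f_i N$ inside $M \obp N$. Let $p_1, \dots, p_r$ (resp. $q_1, \dots, q_s$) be the atoms of the finite Boolean algebra of central projections generated by $\{e_i\}$ in $\Z(M)$ (resp. by $\{f_i\}$ in $\Z(N)$); these are mutually orthogonal central projections summing to $1$. Since $\obp$ is compatible with finite direct sums, multiplication by $p_\alpha \ot q_\beta$ defines mutually orthogonal contractive idempotents $P_{\alpha\beta}$ on $M \obp N$ with $\sum_{\alpha,\beta} P_{\alpha\beta} = \mathrm{id}$, and $P_{\alpha\beta}(J_i \obp K_i) = 0$ unless $p_\alpha \le e_i$ and $q_\beta \le f_i$. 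This exhibits the closure of $W$, computed inside $M \obp N$, as an internal direct sum over the finitely many blocks $(\alpha,\beta)$, and reduces the required estimate to a block-by-block statement: for each block one must recognise $\sum_{i}(p_\alpha J_i)\obp(q_\beta K_i)$ as a single product ideal of $p_\alpha M \obp q_\beta N$ and bound the decomposition there. The essential structural input at this stage is the intersection identity $(J \obp K)\cap(J'\obp K') = (J\cap J')\obp(K\cap K')$, which follows from $(J\cap J')^{**} = e_J e_{J'} M$ together with \Cref{obp-injective}.

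The final and hardest step is to transport these decompositions from the bidual $M \obp N$ back into $A \obp B$ itself: the contractive projections $P_{\alpha\beta}$ are built from central projections of the biduals and need not preserve the subspace $A \obp B$, so a bounded decomposition of $w$ in $M \obp N$ does not automatically live in $A \obp B$. I expect this reflection step to be the main obstacle. The plan to overcome it is to exploit that each product ideal $J_i \obp K_i$, being the projective tensor product of $C^*$-algebraic ideals, carries the contractive two-sided approximate identity $\{u_\lambda \ot v_\mu\}$ coming from approximate identities of $J_i$ and $K_i$; Cohen factorisation then replaces the abstract bidual components by genuine elements of $J_i \obp K_i \subseteq A \obp B$ at a controlled cost, while \Cref{obp-injective} guarantees that the $\gamma$-norms computed in $A \obp B$ and in $M \obp N$ coincide. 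Combined with the block reduction and the intersection identity, this yields the uniform constant $c$ and hence the closedness of $W$. A softer route, already sufficient for the two-summand case, is to pass via the contractive injection $i_{\min}$ of \Cref{obp-min} to the $C^*$-algebra $A \omin B$, where finite sums of closed ideals are automatically closed, and then to reflect back using \Cref{a-ot-b}; the same difficulty reappears there as the need to upgrade the reflection of \Cref{a-ot-b} from elementary tensors to arbitrary elements of the sum.
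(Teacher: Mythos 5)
Your reduction to a uniform decomposition estimate via the open mapping theorem is sound, and the block decomposition of $M \obp N = A^{**}\obp B^{**}$ by atoms of the central supports is plausible as far as it goes. But the proposal has a genuine gap exactly where you flag it: the contractive idempotents $P_{\alpha\beta}$ are built from central projections of the \emph{biduals}, so they send an element $w \in A\obp B$ to elements of $p_\alpha M \obp q_\beta N$ that in general do not lie in $A\obp B$, let alone in the relevant $J_i\obp K_i$. The appeal to Cohen factorisation does not repair this: Cohen factorisation factors an element already known to belong to the essential part of a module over an algebra with a bounded approximate identity; it provides no mechanism for replacing a bidual component $P_{\alpha\beta}(w)$ by an element of $J_i\obp K_i$ with controlled norm, and one would first have to know that this component lies in $A\obp B$ at all. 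Since closedness of $\sum_i (e_iM)\obp(f_iN)$ in $M\obp N$ says nothing directly about closedness of $\sum_i J_i\obp K_i$ in $A\obp B$, this reflection step is precisely the content of the proposition, and the argument as written is a plan rather than a proof. The ``softer route'' through $A\omin B$ fails for the reason you yourself note: \Cref{a-ot-b} reflects only elementary tensors.

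The missing idea is much softer and is what the paper uses: by a result of Dixon (\cite[Proposition 2.4]{dixon}), if $I$ and $J$ are closed ideals in a Banach algebra and $I$ possesses a bounded approximate identity, then $I+J$ is closed. Each product ideal $J_i\obp K_i$ does possess a bounded approximate identity (of the form $\{u_\lambda\ot v_\mu\}$, since closed ideals of $C^*$-algebras do; see \cite[Lemma 3.1]{JK-edin}), so the two-summand case is immediate and the general case follows by induction on the number of summands. The approximate identities you invoke do appear in the correct proof, but as the direct input to Dixon's closedness criterion rather than as an ingredient of a bidual reflection argument.
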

 \begin{proof}
It is enough to consider the sum of two product ideals.  Let
$J_i, K_i$ be closed ideals in $A$ and $B$, respectively, for $i =1,
2$. By \cite[Proposition 2.4]{dixon}, it is enough to show that $J_1
\obp K_1$ has a bounded  approximate identity. Since every closed
ideal in a $C^*$-algebra possesses a bounded approximate identity, by
\cite[Lemma 3.1]{JK-edin}, $J_1 \obp K_1$ possesses a bounded
approximate identity.
 \end{proof}

From \Cref{obp-injective}, \Cref{sum-obp} and the fact that a finite
sum of closed ideals in a $C^*$-algebra is closed, we easily deduce
the following.

 \begin{cor}\label{finite-sum}
   Let $\{J_i\}_{i = 1}^n$ and $\{ K_j\}_{j=1}^m$ be closed ideals in
   $C^*$-algebras $A$ and $B$, respectively. Then,
   \begin{enumerate}
   \item $(\sum_i J_i) \obp B = \sum_i J_i \obp B$, and
     \item $A \obp (\sum_j K_j) = \sum_j A \obp K_j$.
     \end{enumerate}
   \end{cor}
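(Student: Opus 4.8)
The plan is to prove part (1); part (2) then follows verbatim by the symmetry of $\obp$. Throughout, I would view all the tensor products as sitting inside $A \obp B$ via the isometric identifications furnished by \Cref{obp-injective}: for a closed ideal $J$ of $A$ (which is in particular a $C^*$-subalgebra), $J \obp B$ is identified with the closed product ideal $\ol{J \ot B} \seq A \obp B$, and likewise for each $J_i$.

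First I would record that $J := \sum_i J_i$ is again a \emph{closed} ideal of $A$, using the quoted fact that a finite sum of closed ideals in a $C^*$-algebra is closed. This legitimises writing $(\sum_i J_i) \obp B = J \obp B = \ol{J \ot B}$ inside $A \obp B$. At the purely algebraic level one has $J \ot B = (\sum_i J_i) \ot B = \sum_i (J_i \ot B)$, so the two sides of (1) differ only in the location of the closures.

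For the inclusion $\sum_i (J_i \obp B) \seq J \obp B$, I would simply observe that $J_i \seq J$ gives $J_i \ot B \seq J \ot B \seq \ol{J \ot B}$, whence $\ol{J_i \ot B} \seq \ol{J \ot B}$ for each $i$; since $\ol{J \ot B}$ is a closed subspace, the finite sum $\sum_i \ol{J_i \ot B}$ lands inside it. For the reverse inclusion, the decisive input is \Cref{sum-obp}: the finite sum $\sum_i (J_i \obp B) = \sum_i \ol{J_i \ot B}$ of closed product ideals is itself closed. As this closed set already contains $\sum_i (J_i \ot B) = J \ot B$, it must contain the closure $\ol{J \ot B} = J \obp B$. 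The two inclusions together give (1).

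I do not expect a genuine obstacle here: the argument is essentially bookkeeping of closures, and the single nontrivial ingredient—that the right-hand side is already closed—is precisely \Cref{sum-obp}. The only point that calls for a little care is to keep straight the two readings of the symbol on the right (the abstract projective tensor product of the summed ideal versus the sum of product ideals inside $A \obp B$), and to check that the identifications of \Cref{obp-injective} applied to $J$ and to each $J_i$ are mutually compatible; they are, since each such isometry is induced by the common identity map on the algebraic tensor product.
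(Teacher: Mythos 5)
Your argument is correct and uses exactly the ingredients the paper itself cites (the embeddings of \Cref{obp-injective}, the closedness from \Cref{sum-obp}, and the closedness of a finite sum of closed ideals in a $C^*$-algebra); the paper leaves the deduction to the reader, and your bookkeeping of closures is precisely that intended deduction.
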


 Recall that a map $\pi : X \ra Y$ between two Banach spaces is said
 to be a {\em quotient map} if it maps the open unit ball of $X$ onto
 that of $Y$. In particular, a quotient map is surjective. For two
 quotient maps $\varphi_i: X_i \ra Y_i$, $\varphi_1 \ot \varphi_2$
 extends to a quotient map $\varphi_1 \obp \varphi_2 : X_1 \obp X_2
 \ra Y_1 \obp Y_2$ - see \cite[Proposition 2.5]{ryan}.  Anologous to
 \cite[Theorem 2.4]{ass}, \cite[Proposition 7.1.7]{ERbook} and
 \cite[Proposition 3.3]{kr-2}, we obtain the following essential
 result:

 \begin{prop}\label{kernel-phi-ot-psi}
Let $X_i$ and $Y_i$ be Banach spaces and $\varphi_i : X_i \ra Y_i$, $i
= 1, 2$ be quotient maps. If $E_1$ and $E_2$ are closed subspaces of
$Y_1$ and $Y_2$, respectively, then
\[
(\varphi_1 \obp \varphi_2)^{-1} \Big(\ol{E_1 \ot E_2 }\Big) =\ol{
  \ker(\varphi_1)\ot X_2 + \varphi_1^{-1}(E_1) \ot \varphi_2^{-1}(E_2)
  + X_1 \ot \ker(\varphi_2)}.
  \]
In particular, we have
\[
 \ker (\varphi_1 \obp \varphi_2) = \ol{\ker(\varphi_1)\ot X_2 + X_1 \ot
  \ker(\varphi_2)}.
  \]
 \end{prop}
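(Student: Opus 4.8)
The plan is to prove the displayed identity of closed subspaces by establishing the two inclusions separately, and then to read off the ``in particular'' assertion as the special case $E_1 = E_2 = (0)$. Indeed, for $E_1 = E_2 = (0)$ one has $\ol{E_1 \ot E_2} = (0)$, so the left-hand side becomes $\ker(\varphi_1 \obp \varphi_2)$, while $\varphi_i^{-1}(E_i) = \ker(\varphi_i)$ and the middle term $\ker(\varphi_1) \ot \ker(\varphi_2)$ is absorbed into $\ker(\varphi_1) \ot X_2$; hence the right-hand side collapses to $\ol{\ker(\varphi_1) \ot X_2 + X_1 \ot \ker(\varphi_2)}$. Throughout, write $F$ for the closed subspace appearing on the right.

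For the inclusion $F \seq (\varphi_1 \obp \varphi_2)^{-1}\big(\ol{E_1 \ot E_2}\big)$, I would use that the target is closed, being the preimage of a closed set under the bounded map $\varphi_1 \obp \varphi_2$, so it suffices to check that it contains the three algebraic generators of $F$. Since $(\varphi_1 \obp \varphi_2)(x \ot x') = \varphi_1(x) \ot \varphi_2(x')$, any tensor drawn from $\ker(\varphi_1) \ot X_2$ or from $X_1 \ot \ker(\varphi_2)$ is sent to $0$, whereas a tensor drawn from $\varphi_1^{-1}(E_1) \ot \varphi_2^{-1}(E_2)$ lands in $E_1 \ot E_2 \seq \ol{E_1 \ot E_2}$. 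This disposes of the easy inclusion.

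The reverse inclusion is the crux, and I would establish it by duality. By the Hahn-Banach theorem it is enough to show that every $\psi \in (X_1 \obp X_2)^*$ that annihilates $F$ also annihilates the left-hand side. Using the canonical identification $(X_1 \obp X_2)^* = B(X_1, X_2^*)$, represent $\psi$ by a bounded operator $\Psi : X_1 \ra X_2^*$ with $\psi(x \ot x') = \Psi(x)(x')$. Vanishing of $\psi$ on $\ker(\varphi_1) \ot X_2$ forces $\ker(\varphi_1) \seq \ker(\Psi)$, and vanishing on $X_1 \ot \ker(\varphi_2)$ forces each functional $\Psi(x)$ to kill $\ker(\varphi_2)$. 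Because $\varphi_1$ and $\varphi_2$ are quotient maps, the induced isomorphisms $X_i/\ker(\varphi_i) \cong Y_i$ are isometric, so $\Psi$ descends to a bounded operator $\Theta : Y_1 \ra Y_2^*$ determined by $\Theta(\varphi_1(x))(\varphi_2(x')) = \Psi(x)(x')$; equivalently, $\psi = \theta \circ (\varphi_1 \obp \varphi_2)$ for the corresponding $\theta \in (Y_1 \obp Y_2)^* = B(Y_1, Y_2^*)$.

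It remains to feed in the last hypothesis. Vanishing of $\psi$ on $\varphi_1^{-1}(E_1) \ot \varphi_2^{-1}(E_2)$ says $\theta\big(\varphi_1(u) \ot \varphi_2(v)\big) = 0$ for all $u \in \varphi_1^{-1}(E_1)$ and $v \in \varphi_2^{-1}(E_2)$; since $\varphi_i\big(\varphi_i^{-1}(E_i)\big) = E_i$ by surjectivity, this forces $\theta$ to vanish on $E_1 \ot E_2$ and hence, by continuity, on $\ol{E_1 \ot E_2}$. Thus if $w$ belongs to the left-hand side, so that $(\varphi_1 \obp \varphi_2)(w) \in \ol{E_1 \ot E_2}$, then $\psi(w) = \theta\big((\varphi_1 \obp \varphi_2)(w)\big) = 0$, completing the argument. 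I expect the main obstacle to be the factorization $\psi = \theta \circ (\varphi_1 \obp \varphi_2)$: this is exactly where the quotient-map hypothesis is indispensable, both to obtain the isometric identifications $X_i/\ker(\varphi_i) \cong Y_i$ and to ensure that the descended $\theta$ is genuinely bounded, and it requires moving carefully between the operator and bilinear-functional descriptions of the projective tensor dual.
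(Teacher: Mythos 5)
Your argument is correct and follows essentially the same route as the paper: both prove the easy inclusion directly and obtain the reverse inclusion by duality, using the quotient-map hypothesis to factor an annihilating functional as $\psi=\theta\circ(\varphi_1\obp\varphi_2)$ (the paper phrases this with bounded bilinear forms rather than $B(X_1,X_2^*)$, which is the same identification). The only real difference is your final step, where you conclude by noting that $\theta$ vanishes on $\ol{E_1\ot E_2}$ by continuity; the paper instead invokes Ryan's representation of $(\varphi_1\obp\varphi_2)(x)$ as an absolutely convergent series of elementary tensors from $E_1\ot E_2$ — your version is marginally cleaner but not a different proof.
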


 \begin{proof}
 Set $Z = \ol{ \ker(\varphi_1)\ot X_2 + \varphi_1^{-1}(E_1) \ot
   \varphi_2^{-1}(E_2) + X_1 \ot \ker(\varphi_2)}$.  Recall that, for
 any subspace $W$ of a Banach space $X$, $W^{\perp \perp} = \ol{W}$,
 where $W^\perp := \{ \Phi \in X^*: \Phi(W) = (0)\} $ (Bipolar
 Theorem). So, it suffices to show that $\Big( (\varphi_1 \obp
 \varphi_2)^{-1} \big(\, \ol{E_1 \ot E_2 }\, \big) \Big)^{\perp}= Z^{\perp}$.

Clearly, $ \Big( (\varphi_1 \obp \varphi_2)^{-1} \big(\, \ol{E_1 \ot
  E_2 }\, \big) \Big)^{\perp} \subseteq Z^{\perp} $. For the reverse
inclusion, let $f \in Z^\perp$. Since $(X_1 \obp X_2)^*$ can be
identified with the space of bounded bilinear forms on $X_1 \times
X_2$ (\cite[$\S 2.2$]{ryan}), there exists a bounded bilinear map
$\tilde{f}: X_1 \times X_2 \to \mathbb{C}$ such that $f(a_1 \ot a_2) =
\tilde{f}(a_1,a_2)$ for $a_i \in X_i$. Define $g: Y_1 \times Y_2 \to
\mathbb{C}$ as $g(b_1,b_2) = \tilde{f}(a_1,a_2)$, where
$\varphi_i(a_i) = b_i, i =1,2$. Since $f|_Z =0$, $g$ is well defined,
and it is also a bounded bilinear map. Thus $g$ can be identified with
a unique element in $ (Y_1 \obp Y_2)^*$, say, $\tilde{g}$. It can be
seen that $ f= \tilde{g} \circ (\varphi_1 \obp \varphi_2).$

 Now, take any $x$ in $(\varphi_1 \obp \varphi_2)^{-1}(\, \ol{E_1 \ot
 E_2}\, )$. Then, by \cite[Proposition 2.8]{ryan}, there exist bounded
 sequences $\{r_n\}$ and $\{s_n\}$ in $E_1$ and $E_2$, respectively,
 such that $$
 (\varphi_1 \obp \varphi_2)(x) = \sum_{n=1}^{\infty} r_n \otimes
 s_n.$$ By surjectivity of $\varphi_i$'s, for each $n \in \N$, fix
 $x_n \in \varphi_1^{-1}(r_n)$ and $y_n \in
 \varphi_2^{-1}(s_n)$. Then,
 $$f(x) = \tilde{g}\Big(\sum_n r_n \ot s_n\Big) = \sum_n
 \tilde{g}\big(r_n \ot s_n\big) = \sum_n f( x_n \ot y_n) = 0, $$ which
 implies that $ Z^{\perp} \subseteq
\Big( (\varphi_1 \obp
  \varphi_2)^{-1} \big(\, \ol{E_1 \ot E_2 }\, \big) \Big)^{\perp}
  $.
  \end{proof}
  
We'll have instances ahead to appeal to the following useful
consequence, wherein  $i_h$ is as in \Cref{obp-min}.

\begin{cor}\label{h-gamma}
  Let $I$ and $J$ be closed ideals in  $C^*$-algebras $A$ and $B$,
  respectively. Then, we have
  \[
  i_h(A \obp J + I \obp B)  =  (A \oh J + I \oh B) \cap i_h(A \obp B).
  \]
  \end{cor}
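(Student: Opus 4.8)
The plan is to realize each of the two ideals in the statement as the kernel of the map induced by quotienting modulo $I$ and $J$, and then to finish with a short diagram chase. First I would introduce the canonical quotient $*$-homomorphisms $\varphi_1 : A \ra A/I$ and $\varphi_2 : B \ra B/J$; since the quotient of a $C^*$-algebra by a closed ideal carries the quotient norm, each $\varphi_i$ is a Banach-space quotient map, so \Cref{kernel-phi-ot-psi} applies with $X_1 = A$, $X_2 = B$, $Y_1 = A/I$, $Y_2 = B/J$ and $E_1 = E_2 = (0)$. This yields
$$\ker(\varphi_1 \obp \varphi_2) = \ol{\ker(\varphi_1) \ot X_2 + X_1 \ot \ker(\varphi_2)} = \ol{I \ot B + A \ot J}.$$
By \Cref{obp-injective} the closures $\ol{I \ot B}$ and $\ol{A \ot J}$ are exactly the product ideals $I \obp B$ and $A \obp J$ sitting inside $A \obp B$, and by \Cref{sum-obp} their sum $I \obp B + A \obp J$ is already closed; hence $\ol{I \ot B + A \ot J} = A \obp J + I \obp B$, giving the clean identity $\ker(\varphi_1 \obp \varphi_2) = A \obp J + I \obp B$.

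Next I would record the parallel statement at the Haagerup level. The $\varphi_i$ being quotient maps, the Haagerup analogue of \Cref{kernel-phi-ot-psi}, namely \cite[Theorem 2.4]{ass}, gives
$$\ker(\varphi_1 \oh \varphi_2) = \ol{I \ot B + A \ot J}^h = A \oh J + I \oh B,$$
where now $\varphi_1 \oh \varphi_2 : A \oh B \ra (A/I) \oh (B/J)$. Thus each ideal appearing in the corollary is precisely the kernel of the relevant induced map.

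Finally I would assemble these. Let $i_h' : (A/I) \obp (B/J) \ra (A/I) \oh (B/J)$ be the injective homomorphism from \Cref{obp-min} for the quotient algebras. On elementary tensors one checks $(\varphi_1 \oh \varphi_2)(i_h(a \ot b)) = \varphi_1(a) \ot \varphi_2(b) = i_h'((\varphi_1 \obp \varphi_2)(a \ot b))$, so by continuity and density
$$(\varphi_1 \oh \varphi_2) \circ i_h = i_h' \circ (\varphi_1 \obp \varphi_2).$$
For the forward inclusion, if $w \in A \obp J + I \obp B = \ker(\varphi_1 \obp \varphi_2)$, then $(\varphi_1 \oh \varphi_2)(i_h(w)) = i_h'(0) = 0$, so $i_h(w) \in \ker(\varphi_1 \oh \varphi_2) = A \oh J + I \oh B$, while trivially $i_h(w) \in i_h(A \obp B)$. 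For the reverse inclusion, if $\xi = i_h(w) \in (A \oh J + I \oh B) \cap i_h(A \obp B)$, then $i_h'((\varphi_1 \obp \varphi_2)(w)) = (\varphi_1 \oh \varphi_2)(\xi) = 0$, and injectivity of $i_h'$ forces $(\varphi_1 \obp \varphi_2)(w) = 0$; hence $w \in A \obp J + I \obp B$ and $\xi \in i_h(A \obp J + I \obp B)$.

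The computational heart is handed to us by \Cref{kernel-phi-ot-psi} and its Haagerup counterpart, so I do not expect a serious obstacle; the points requiring genuine care are (i) identifying $\ol{I \ot B + A \ot J}$ with the honest algebraic sum $A \obp J + I \obp B$, where the closedness from \Cref{sum-obp} together with the embedding \Cref{obp-injective} is essential, and (ii) the injectivity of the quotient-level map $i_h'$, which is exactly what makes the reverse inclusion go through. Everything else is a formal diagram chase, once I verify that $\varphi_1 \obp \varphi_2$ and $\varphi_1 \oh \varphi_2$ are genuinely the maps induced by the $\varphi_i$ and that the relating square commutes.
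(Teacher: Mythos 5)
Your proposal is correct and follows essentially the same route as the paper: both identify $A \obp J + I \obp B$ and $A \oh J + I \oh B$ as the kernels of $\pi_I \obp \pi_J$ and $\pi_I \oh \pi_J$ (via \Cref{kernel-phi-ot-psi} and the result of Allen--Sinclair--Smith, respectively) and then conclude by the commutativity of the square relating $i_h$ to these quotient maps together with the injectivity of $i_h$ at the quotient level. The paper merely compresses the final diagram chase that you write out explicitly.
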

\begin{proof}
By \Cref{kernel-phi-ot-psi}, we have $\ker(\pi_I \obp \pi_J) = A \obp J +
I \obp B$ and also, by \cite[Corollary 2.6]{ass}, $\ker(\pi_I \oh
\pi_J) = A \oh J + I \oh B$. Now, the  diagram \[
\xymatrix{ A
  \obp B \ar@{^{(}->}[r]^{i_h} \ar[d]^{\pi_I \obp \pi_J} & A \oh B\ar[d]^{\pi_I \oh
    \pi_J} \\ A/I \obp B/J \ar@{^{(}->}[r]^{i_h} & A/I \oh B/J  }
\] is easily
seen to be commutative and we are done.
    \end{proof}

The following folklore result will be required in the proof of
\Cref{obp-ideal}. (Note that, a part of it also follows from
\Cref{kernel-phi-ot-psi}.)

 \begin{lemma}\label{obp-kernel}
Let $A$ and $B$ be $C^*$-algebras, $J$ be a closed ideal in $B$ and
$\pi : B \ra B/J$ be the natural quotient map. Then, $\ker
(\mathrm{Id}\obp \pi) = A \obp J$ and $ (\mathrm{Id}\obp \pi) (Z)$ is
closed for any closed subspace $Z$ in $A \obp B$. Additionally, if $Z$
contains $A \obp J$, then $$ Z = (\mathrm{Id}\obp
\pi)^{-1}((\mathrm{Id}\obp \pi)(Z)). $$
   \end{lemma}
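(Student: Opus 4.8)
The plan is to recognize $\mathrm{Id}\obp\pi$ as a metric surjection with kernel $A\obp J$, after which all three assertions reduce to the universal property of a Banach-space quotient map; the only genuinely delicate point is the closedness claim. Since both $\mathrm{Id}:A\ra A$ and $\pi:B\ra B/J$ are quotient maps (the identity trivially, and $\pi$ because the $C^*$-quotient norm is the quotient norm), the remark preceding \Cref{kernel-phi-ot-psi}, i.e. \cite[Proposition 2.5]{ryan}, shows that $q:=\mathrm{Id}\obp\pi:A\obp B\ra A\obp(B/J)$ is again a quotient map. Writing $K:=\ker q$, the induced map $(A\obp B)/K\ra A\obp(B/J)$ is then an isometric isomorphism, and $q$ is thereby identified with the canonical quotient map of $A\obp B$ onto $(A\obp B)/K$. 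I would record this identification at the outset, since it governs everything that follows.

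For the kernel I would apply \Cref{kernel-phi-ot-psi} with $\varphi_1=\mathrm{Id}:A\ra A$ and $\varphi_2=\pi:B\ra B/J$, so that $\ker(\varphi_1)=(0)$ and $\ker(\varphi_2)=J$. The second, more special identity of that proposition then gives $K=\ol{\ker(\mathrm{Id})\ot B+A\ot\ker(\pi)}=\ol{A\ot J}$, and by \Cref{obp-injective} this closure, computed inside $A\obp B$, is precisely the product ideal $A\obp J$. This settles the first assertion. The reconstruction identity is then purely formal: for any subspace $S$ one has $q^{-1}(q(S))=S+K$, so whenever $Z$ is a subspace with $Z\supseteq A\obp J=K$ we obtain $q^{-1}(q(Z))=Z+K=Z$, which is the final assertion (and here closedness of $Z$ is not even needed).

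The substance of the lemma is therefore the closedness of $q(Z)$. Because $q$ is a quotient map (hence continuous, open and surjective, so a topological quotient map), a subset $S\subseteq A\obp(B/J)$ is closed exactly when $q^{-1}(S)$ is closed in $A\obp B$; taking $S=q(Z)$ and using $q^{-1}(q(Z))=Z+K$ reduces the claim to the statement that $Z+A\obp J$ is norm-closed in $A\obp B$. When $Z\supseteq A\obp J$ this is immediate, as then $Z+A\obp J=Z$, and this is the only case actually invoked in the proof of \Cref{obp-ideal}, so one clean route is to prove the closedness in exactly this restricted form.

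I expect the general case — that the sum of an \emph{arbitrary} closed subspace $Z$ with the kernel ideal $A\obp J$ is again closed — to be the main obstacle, precisely because images of closed subspaces under a Banach-space quotient map need not be closed without extra structure (the failure being exactly that $q|_Z$ need not be bounded below modulo $Z\cap(A\obp J)$). The structural feature I would try to exploit is the $C^*$-nature of the ideal $J$: bringing in an approximate identity $(e_\lambda)$ of $J$ together with the standard fact that $\lim_\lambda\|b-be_\lambda\|=\|\pi(b)\|$ for $b\in B$, and the circumstance that closed ideals of $C^*$-algebras sit as $M$-ideals, I would attempt to show that $q$ restricted to $Z$ has closed range, equivalently that $Z+A\obp J$ is closed. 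This last estimate is where I anticipate the real work, and it is the step I would isolate and scrutinize most carefully.
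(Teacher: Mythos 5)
Your handling of the kernel identity and of the reconstruction identity $Z=(\mathrm{Id}\obp\pi)^{-1}((\mathrm{Id}\obp\pi)(Z))$ is correct, and for the kernel you take precisely the alternative route the paper itself signals in the parenthetical remark before the lemma: apply \Cref{kernel-phi-ot-psi} with $\varphi_1=\mathrm{Id}$ and $\varphi_2=\pi$ to get $\ker(\mathrm{Id}\obp\pi)=\ol{A\ot J}$, then identify this closure with the product ideal $A\obp J$ via \Cref{obp-injective}. The paper instead argues directly, approximating an element of the kernel by elements of $A\ot J$ using the representation $\sum_n a_n\ot b_n$ from \cite[Proposition 2.8]{ryan}; the two arguments are interchangeable, and yours is the cleaner one given that \Cref{kernel-phi-ot-psi} is already available.

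The one claim you do not prove --- closedness of $(\mathrm{Id}\obp\pi)(Z)$ for an \emph{arbitrary} closed subspace $Z$ --- is the one the paper does not prove either: its entire justification is that $\mathrm{Id}\obp\pi$ is a contraction ``and hence $(\mathrm{Id}\obp \pi)(Z)$ is closed'', which is a non sequitur, since bounded maps, and even metric quotient maps, do not send closed subspaces to closed sets. In fact your instinct that this step needs ``extra structure'' understates the problem: the claim is false as stated. Take $A=\C$, so that $\mathrm{Id}\obp\pi$ is simply $\pi:B\ra B/J$; let $B=c_0\oplus c_0$ (the $C^*$-direct sum), $J=c_0\oplus\{0\}$, and let $Z=\{(x,Tx):x\in c_0\}$ be the graph of the bounded operator $T(x)_n=x_n/n$. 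Then $Z$ is closed, but $\pi(Z)$ corresponds to $T(c_0)=\{y\in c_0:(ny_n)_n\in c_0\}$, a dense proper subspace of $B/J\cong c_0$; equivalently, $Z+J$ is not closed. So the $M$-ideal/approximate-identity strategy you propose for the general case cannot succeed, and you should not pursue it. The restricted statement you do prove --- $(\mathrm{Id}\obp\pi)(Z)$ is closed whenever the closed subspace $Z$ contains the kernel $A\obp J$, because then the preimage of the image is $Z$ itself and a quotient map is a topological quotient map --- is exactly the form in which the lemma is invoked in \Cref{obp-ideal} and \Cref{ideals-finite}, so your version is the one that should be recorded.
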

 \begin{proof}
 Clearly, $A \obp J \subseteq \ker (\mathrm{Id}\obp \pi)$, where
the inclusion is meaningful because of \Cref{obp-injective}. Let $z \in
 \ker (\mathrm{Id}\obp \pi)$ and $\epsilon > 0$. Then, by
 \cite[Proposition 2.8]{ryan}, there exist bounded sequences $\{ a_n\}
 \subset A$ and $\{b_n +J \} \subset B/J$ such that $ \sum_n a_n \ot
 (b_n + J) = 0 + J$ and $\sum_n \|a_n\| \|b_n + J\| <
 \epsilon$. Choose a sequence $\{ x_n\} \subset J$ such that $\sum_n
 \|a_n\| \|b_n - x_n \| < \epsilon$. Then, it is easily seen that
 $\sum_n a_n \ot x_n$ is absosutely convergent in $A \obp I$ and that
 $\|\sum_n a_n \ot b_n - \sum_n a_n \ot x_n \|_{\gamma} < \epsilon$,
 which implies that $A \obp I$ is dense in $ \ker (\mathrm{Id}\obp
 \pi)$.
\vspace*{1mm}

Finally, by \cite[Proposition 2.3]{ryan}, $\|\mathrm{Id}\obp \pi \| = \|
\mathrm{Id}\|\, \| \pi\| = 1$ so that $\mathrm{Id}\obp \pi $ is a
contraction and hence $(\mathrm{Id}\obp \pi) (Z)$ is closed.   \end{proof}
 
 Note that \Cref{obp-kernel} holds for Banach spaces and closed
 subspace as well. The same proof works in this generality.

With this, all ingredients are available to adapt the steps of
\cite[Theorem 3.1]{GJ} to obtain a proof of the following ideal
structure:
 
 \begin{theorem}\label{obp-ideal}
 Let $A$ and $B$ be $C^*$-algebras where $A$ is topologically
 simple. Then every closed ideal of the Banach $*$-algebra $A \obp B$
 is a product ideal of the form $ A \obp J$ for some closed ideal $J$
 in $B$. In particular, every closed ideal in $A \obp B$ is a
 $*$-ideal.
   \end{theorem}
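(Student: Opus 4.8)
The plan is to follow the scheme of \cite[Theorem 3.1]{GJ} and produce the ideal $J$ of $B$ directly from $I$ by ``slicing off'' the simple factor $A$. Concretely, I would set
\[
J := \{ b \in B : a \ot b \in I \text{ for all } a \in A \}.
\]
First I would check that $J$ is a closed two-sided ideal of $B$: closedness and linearity are immediate, while for $b \in J$ and $b' \in B$ one multiplies $a \ot b \in I$ by $e_\lambda \ot b'$ (with $(e_\lambda)$ an approximate identity of $A$) and passes to the limit to obtain $a \ot b'b \in I$ for every $a$, and symmetrically on the other side, so $b'b, bb' \in J$. Since $a \ot b \in I$ for every $a \in A$ and every $b \in J$, the algebraic tensor product $A \ot J$ lies in $I$; as $I$ is closed and, by \Cref{obp-injective}, $A \obp J$ sits isometrically in $A \obp B$ as $\ol{A \ot J}$, this gives the easy inclusion $A \obp J \subseteq I$.

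The substance of the proof is the reverse inclusion $I \subseteq A \obp J$, which I would obtain by passing to the quotient. Let $\pi : B \to B/J$ be the quotient map and consider $\mathrm{Id} \obp \pi : A \obp B \to A \obp (B/J)$. By \Cref{obp-kernel}, $\ker(\mathrm{Id}\obp\pi) = A \obp J$, the image $\ol I := (\mathrm{Id}\obp\pi)(I)$ is a closed ideal of $A \obp (B/J)$, and, since $I \supseteq A \obp J$, one has $I = (\mathrm{Id}\obp\pi)^{-1}(\ol I)$. Hence it suffices to prove $\ol I = (0)$, for then $I = \ker(\mathrm{Id}\obp\pi) = A \obp J$. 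I would argue by contradiction, assuming $\ol I \neq (0)$.

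Here the topological simplicity of $A$ enters, and this is the step I expect to be the crux. Since $\ol I$ is a non-zero closed ideal of $A \obp (B/J)$ and $A$ is still topologically simple, \Cref{elementary-tensor} furnishes a non-zero elementary tensor $a_1 \ot \ol{b}_1 \in \ol I$ with $a_1 \neq 0$ and $\ol{b}_1 \neq 0$. Because $A$ is topologically simple, the closed ideal generated by $a_1$ is all of $A$, i.e. $\ol{\mathrm{span}}\{x a_1 y : x,y \in A\} = A$; multiplying $a_1 \ot \ol b_1$ on both sides by elements $x \ot e_\lambda$ and $y \ot e_\mu$ (with $(e_\lambda)$ an approximate identity of $B/J$) and taking limits keeps $\ol b_1$ fixed and yields $x a_1 y \ot \ol b_1 \in \ol I$; passing to the closed linear span then gives $a \ot \ol b_1 \in \ol I$ for every $a \in A$. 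Finally I would lift: choosing $b_1 \in B$ with $\pi(b_1) = \ol b_1$, for each $a$ there is $x_a \in I$ with $(\mathrm{Id}\obp\pi)(x_a) = a \ot \ol b_1 = (\mathrm{Id}\obp\pi)(a \ot b_1)$, so $a \ot b_1 - x_a \in \ker(\mathrm{Id}\obp\pi) = A \obp J \subseteq I$ and hence $a \ot b_1 \in I$ for all $a$; this says $b_1 \in J$, i.e. $\ol b_1 = 0$, contradicting $\ol b_1 \neq 0$. Thus $\ol I = (0)$ and $I = A \obp J$. The main obstacle is precisely the promotion of the \emph{single} elementary tensor supplied by \Cref{elementary-tensor} to the \emph{full} slice $A \ot \ol b_1 \subseteq \ol I$, which is what forces the essential use of simplicity. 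For the last assertion, $J$ is a closed ideal of the $C^*$-algebra $B$, hence self-adjoint; since the involution on the Banach $*$-algebra $A \obp B$ satisfies $(a \ot b)^* = a^* \ot b^*$ with $A^* = A$ and $J^* = J$, we get $(A \obp J)^* = A \obp J$, so $I$ is a $*$-ideal.
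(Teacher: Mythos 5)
Your proof is correct, and it follows the same overall strategy as the paper: produce a closed ideal $J$ of $B$ with $A\obp J\subseteq I$, pass to the quotient $A\obp(B/J)$ via \Cref{obp-kernel}, and use \Cref{elementary-tensor} to show the image of $I$ must vanish. The one genuine difference is how $J$ is obtained. The paper runs a Zorn's Lemma argument on the family $\mcal{F}=\{J : A\obp J\subseteq I\}$ (using \Cref{finite-sum} to verify that chains have upper bounds) and then contradicts maximality of the resulting $J$; you instead exhibit the maximum of that family explicitly as the slice ideal $J=\{b\in B: a\ot b\in I\ \text{for all}\ a\in A\}$ and contradict $\ol b_1\neq 0$ directly. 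This removes the appeal to Zorn's Lemma and to the closedness of finite sums of product ideals, at the small cost of having to verify by hand (with approximate identities) that $J$ is a closed ideal and that the single elementary tensor $a_1\ot\ol b_1$ promotes to the full slice $A\ot\ol b_1\subseteq\ol I$ --- a promotion the paper instead delegates to the second assertion of \Cref{elementary-tensor} (the closed ideal generated by $b$ and $J$ giving a strictly larger member of $\mcal{F}$). Both arguments are sound; yours is marginally more self-contained and constructive.
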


 \begin{proof}
We will appeal to the usual Zorn's Lemma approach. Let $I$ be a
non-zero closed ideal in $A \obp B$.  Consider the collection $$
\mcal{F} := \{ J \subseteq B: J
\ \mathrm{is\ a\ closed\ ideal\ in}\ B\ \mathrm{and}\ A \obp J
\subseteq I\},$$ where the inclusion $A \obp J \subseteq I$ makes
sense by \Cref{obp-injective}.  By \Cref{elementary-tensor}, $I$
contains a non-zero elementary tensor, say, $a \ot b$. If $K$ and $J$
are the non-zero closed ideals in $A$ and $B$ generated by $a$ and
$b$, respectively, then by simplicity of $A$, we have $K = A$ and $A
\obp J \subseteq I$. In particular, $\mcal{F} \neq \emptyset$.

We saw in \Cref{finite-sum} that $A \obp (\sum_i J_i) = \sum_i (A \obp
J_i)$ for any finite collection of closed ideals $\{J_i\}$ in $B$.
So, with respect to the partial order given by set inclusion, every
chain $\{ J_i : i \in \Lambda\}$ in $\mcal{F}$ has an upper bound in
$\mcal{F}$, namely, the closure of the ideal
$\big\{\sum_{\mathrm{finite}} x_i : x_i \in J_i\big\}$, implying
thereby that there exists a maximal element, say $J$, in $\mcal{F}$.

The obvious thing to do now is to try to show that $A \obp J =
I$. Consider the canonical map $\mathrm{Id} \obp \pi : A \obp B \ra A
\obp (B / J)$. By \Cref{obp-kernel}, we have $\ker(\mathrm{Id}\obp
\pi) = A \obp J$ and that $\widetilde{I}:=(\mathrm{Id} \obp \pi)(I)$
is a closed ideal in $A \obp (B / J)$. It now suffices to show that
$\widetilde{I} = (0)$. If $\widetilde{I} \neq (0)$, then, again by
\Cref{elementary-tensor}, $\widetilde{I}$ contains a non-zero
elementary tensor, say, $a \ot (b +J)$.  Observe that $b \notin J$
and $$a \ot b \in (\mathrm{Id}\obp \pi)^{-1}(a \ot (b + J)) \in
(\mathrm{Id}\obp \pi)^{-1} \big ((\mathrm{Id}\obp \pi) (I)\big) = I $$
by \Cref{obp-kernel}. Let $K$ denote the closed ideal in $B$ generated
by $b$ and $J$. Note that $J \subsetneq K$. Since $A$ is simple, it
equals the closed ideal generated by $a$ and we obtain $A \obp K
\subseteq I$, i.e., $K \in \mcal{F}$ contradicting the maximality of
$J$ in $\mcal{F}$. 
\end{proof}

 \noindent In view of Theorems \ref{obp-injective}, \ref{obp-ideal}
 and \Cref{obp-kernel}, we obtain the following analogue of
 \cite[Corollary 4.21]{Tak}, \cite[Theorem 5.1]{ass} and of
 \cite[Theorem 3.7]{JK-edin}.

 \begin{cor}\label{obp-simple}
 Let $A$ and $B$ be  $C^*$-algebras. Then the
 Banach $*$-algebra $A \obp B$ is topologically simple if and only if
 $A$ and $B$ are both topologically simple.
   \end{cor}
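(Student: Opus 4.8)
The plan is to prove the two implications separately. For the implication that simplicity of $A$ and $B$ forces that of $A \obp B$, I would invoke \Cref{obp-ideal} directly. Since $A \obp B$ is nonzero and $A$ is topologically simple, \Cref{obp-ideal} says that every closed ideal of $A \obp B$ has the form $A \obp J$ for some closed ideal $J \subseteq B$; as $B$ is topologically simple the only choices are $J = (0)$ and $J = B$, producing only the trivial closed ideals $A \obp (0) = (0)$ and $A \obp B$. Hence $A \obp B$ is topologically simple.

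For the converse I would argue the contrapositive and use the symmetry of $\obp$. Assume, say, that $A$ is not topologically simple and pick a closed ideal $I$ in $A$ with $(0) \neq I \neq A$. By \Cref{obp-injective}, $I \obp B$ sits inside $A \obp B$ as the closed $*$-subalgebra $\overline{I \ot B}$; and since $I \ot B$ is an algebraic two-sided ideal of $A \ot B$, its closure is a closed ideal of the Banach algebra $A \obp B$. This ideal is nonzero because the cross-norm property gives $\|a \ot b\|_\gamma = \|a\|\,\|b\| \neq 0$ for any $0 \neq a \in I$ and $0 \neq b \in B$.

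The one point that needs a short argument is that $I \obp B$ is a \emph{proper} ideal, and this is where I would appeal to \Cref{kernel-phi-ot-psi}. Writing $\pi_I : A \ra A/I$ for the quotient map and viewing $\mathrm{Id} : B \ra B$ as a quotient map, the ``in particular'' clause of \Cref{kernel-phi-ot-psi} gives $\ker(\pi_I \obp \mathrm{Id}) = \overline{I \ot B + A \ot (0)} = I \obp B$. Since $\pi_I \obp \mathrm{Id}$ is a quotient map onto $(A/I) \obp B$, which is nonzero (again by the cross-norm property, as $A/I \neq (0)$ and $B \neq (0)$), it is not the zero map, so its kernel $I \obp B$ is a proper subspace of $A \obp B$. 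Thus $I \obp B$ is a nontrivial closed ideal, contradicting the topological simplicity of $A \obp B$; interchanging the roles of $A$ and $B$ handles the case where $B$ fails to be simple. The genuinely delicate step is this verification of properness of $I \obp B$; the rest reduces to \Cref{obp-ideal}, \Cref{obp-injective}, \Cref{kernel-phi-ot-psi}, and the cross-norm property, all already available.
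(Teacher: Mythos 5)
Your proof is correct and takes essentially the same route the paper indicates for this corollary: \Cref{obp-ideal} handles the implication from simplicity of $A$ and $B$ to simplicity of $A \obp B$, while the converse rests on the embedding of \Cref{obp-injective} together with identifying $I \obp B$ as the (proper) kernel of $\pi_I \obp \mathrm{Id}$. The only cosmetic difference is that you cite \Cref{kernel-phi-ot-psi} for that kernel identification where the paper points to \Cref{obp-kernel}, which is just the special case you need.
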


If $A$ contains only finitely many closed ideals and $\alpha$ is
either the Haagerup or the operator space projective tensor product,
then every closed ideal in the Banach algebra $A \ot^{\alpha} B$ is a
finite sum of product ideals, - see \cite{ass, kr-2}.  We now make
another use of \Cref{obp-ideal} to prove its analogue for $A \obp
B$. We'll use the following useful observation made in the proof of
\cite[Theorem 5.3]{ass}.
\begin{lemma}\label{ann-result}
  Let $A$ and $B$ be $C^*$-algebras and $I$ be a  simple closed  ideal
  in $A$.  If $K$ is a closed ideal in the Banach algebra $A \oh B$, then $K \cap (I \oh
  B) = I \oh J$ for some closed ideal $J$ in $ B$, and, 
  \[
K \subseteq A \oh J +  M \oh B,
\]
where $M$ is the closed ideal $ann(I) :=\{ x \in A: x
  I = I x = (0)\}$.
  \end{lemma}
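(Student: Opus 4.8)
The plan is to establish the two assertions in turn, the second resting on the first together with an essentiality argument.

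\emph{The ideal $J$.} Since $\oh$ is injective (\cite{ass}), the product ideal $I \oh B$ embeds in $A \oh B$ as a closed ideal. Hence $K \cap (I \oh B)$ is a closed ideal of the Banach algebra $I \oh B$: indeed, for $w \in K \cap (I \oh B)$ and $v \in I \oh B$, the products $wv$ and $vw$ lie in $K$ (as $K$ is an ideal of $A \oh B$) and in $I \oh B$ (as the latter is a subalgebra). Because $I$ is topologically simple, \cite[Proposition 5.2]{ass} identifies every closed ideal of $I \oh B$ as a product ideal, giving $K \cap (I \oh B) = I \oh J$ for some closed ideal $J$ in $B$.

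\emph{Reduction of the second assertion.} Write $\pi_M : A \ra A/M$ and $\pi_J : B \ra B/J$ for the quotient maps, put $C := A/M$, $\ol I := \pi_M(I)$, and set $\Theta := \pi_M \oh \pi_J$. By the exactness of $\oh$, \cite[Corollary 2.6]{ass} yields $\ker \Theta = A \oh J + M \oh B$, so it is enough to prove $\Theta(K) = (0)$. I first record two facts. As $I \cap M = (0)$ (if $y \in I \cap M$ then, since $y^* \in I$ and $y$ annihilates $I$, $y y^* = 0$, so $y = 0$), the map $\pi_M|_I$ is an isometric isomorphism onto $\ol I$, and $\ol I$ is \emph{essential} in $C$, i.e.\ $\mathrm{ann}_C(\ol I) = (0)$: if $\pi_M(x) \in \mathrm{ann}_C(\ol I)$ then $xI, Ix \subseteq I \cap M = (0)$, whence $x \in M$. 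Moreover $\Theta(I \oh J) = \pi_M(I) \oh \pi_J(J) = (0)$ since $\pi_J(J) = (0)$, while $\Theta$ carries $I \oh B$ onto the product ideal $\ol I \oh (B/J)$, the Haagerup tensor product of the quotient maps $\pi_M|_I$ and $\pi_J$ being a quotient map.

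\emph{Pushing $K$ into the annihilator.} The identity $K \cap (I \oh B) = I \oh J$ controls how $K$ multiplies $I \oh B$: for $k \in K$ and $u \in I \oh B$ one has $ku, uk \in K \cap (I \oh B) = I \oh J$. Applying $\Theta$ and using $\Theta(I \oh J) = (0)$ together with the surjectivity of $\Theta|_{I \oh B}$ onto $\ol I \oh (B/J)$, every $w \in \ol I \oh (B/J)$ satisfies $\Theta(k)\, w = w\, \Theta(k) = 0$; that is, $\Theta(K) \subseteq \mathrm{ann}_{C \oh (B/J)}\big(\ol I \oh (B/J)\big)$.

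\emph{The crux.} It remains to show this annihilator is trivial, i.e.\ that $\ol I \oh (B/J)$ is essential in $C \oh (B/J)$; this is the step I expect to be the main obstacle. I would pass to the minimal tensor product via the injective homomorphism $j_{\min} : C \oh (B/J) \ra C \omin (B/J)$ of \Cref{obp-min}. If $z$ annihilates $\ol I \oh (B/J)$, then $j_{\min}(z)$ annihilates the dense set $\ol I \ot (B/J)$, hence all of $\ol I \omin (B/J)$, so it suffices to prove that $\ol I \omin (B/J)$ is essential in the $C^*$-algebra $C \omin (B/J)$. For this I would choose the representation carefully, since an essential ideal need not act nondegenerately in an arbitrary faithful representation: starting from a faithful nondegenerate representation of $\ol I$, I would extend it to the multiplier algebra and restrict to $C$, obtaining a faithful representation of $C$ on a Hilbert space $H$ on which $\ol I$ acts nondegenerately. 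Tensoring with a faithful nondegenerate representation of $B/J$ on $\mcal H$ and using the defining property of $\omin$, the ideal $\ol I \omin (B/J)$ acts nondegenerately on $H \ot \mcal H$, so any operator annihilating it vanishes. Injectivity of $j_{\min}$ then gives $z = 0$, whence $\Theta(K) = (0)$ and $K \subseteq A \oh J + M \oh B$, as required.
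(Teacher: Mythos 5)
The paper does not actually prove this lemma: it is stated as ``the following useful observation made in the proof of \cite[Theorem 5.3]{ass}'' and left entirely to that reference, so there is no in-paper argument to compare yours against. Taken on its own, your proof is correct and self-contained. The first assertion is exactly the intended application of \cite[Proposition 5.2]{ass}: injectivity of $\oh$ realizes $I \oh B$ as a closed ideal of $A \oh B$, so $K \cap (I \oh B)$ is a closed ideal of $I \oh B$, and topological simplicity of $I$ forces it to be $I \oh J$. For the second assertion your reduction is sound: $\ker(\pi_M \oh \pi_J) = A \oh J + M \oh B$ by \cite[Corollary 2.6]{ass}, so it suffices to show $\Theta(K) = (0)$; the identity $K\cdot(I\oh B) + (I \oh B)\cdot K \subseteq I \oh J$ together with the surjectivity of $\Theta|_{I\oh B}$ onto $\ol{I}\oh (B/J)$ places $\Theta(K)$ in the annihilator of $\ol{I}\oh(B/J)$; and the supporting facts ($I\cap M=(0)$ via $yy^*=0$, essentiality of $\pi_M(I)$ in $A/M$, and the passage through $j_{\min}$ to reduce essentiality of $\ol{I}\oh(B/J)$ to the $C^*$-statement that an essential ideal stays essential under $\omin$ with a fixed algebra, proved by choosing a representation in which the ideal acts nondegenerately) are all handled correctly. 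You were right to flag the last step as the crux and to note that nondegeneracy must be arranged, not assumed; the multiplier-algebra construction you give does arrange it. Whether this coincides with the internal argument of \cite[Theorem 5.3]{ass} cannot be judged from the present paper, but as a free-standing proof it holds up.
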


 \begin{theorem}\label{ideals-finite}
Let $A$ and $B$ be $C^*$-algebras and suppose $A$ contains only
finitely many closed ideals. Then every closed ideal in the Banach
$*$-algebra $A \obp B$ is a finite sum of product ideals.
 \end{theorem}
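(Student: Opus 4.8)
The plan is to argue by induction on the number $n(A)$ of closed ideals of $A$, which is finite by hypothesis, and to bottom out the induction at the topologically simple case already settled in \Cref{obp-ideal}. For the base case $n(A) = 2$ the algebra $A$ is topologically simple, so \Cref{obp-ideal} asserts that every closed ideal of $A \obp B$ is a single product ideal $A \obp J$, and there is nothing more to do. For the inductive step I would fix a closed ideal $I$ in $A \obp B$ and choose a minimal non-zero closed ideal $I_0$ of $A$; such an $I_0$ exists because the ideal lattice of $A$ is finite, and it is automatically topologically simple.

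The first substantial task is to transport the Haagerup-level information of \Cref{ann-result} down to $\obp$. Concretely, I would form the closed ideal $I_h = \overline{i_h(I)}^h$ in $A \oh B$, with $i_h$ as in \Cref{obp-min}, and apply \Cref{ann-result} with the simple ideal $I_0$ playing the role of its ``$I$'', obtaining a closed ideal $J$ in $B$ with $I_h \cap (I_0 \oh B) = I_0 \oh J$ and $I_h \subseteq A \oh J + M \oh B$, where $M = \mathrm{ann}(I_0)$. Pulling these back through $i_h$ should yield, at the level of $\obp$, the three facts I want: that $I_0 \obp J \subseteq I$, that $I \cap (I_0 \obp B) = I_0 \obp J$, and that $I \subseteq A \obp J + M \obp B$. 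The containment $I_0 \obp J \subseteq I$ would come from \Cref{a-ot-b-h}, since every elementary tensor $a \ot b$ with $a \in I_0$ and $b \in J$ lies in $I_0 \oh J \subseteq I_h$ and hence in $I$, and $I_0 \obp J$ is the closed linear span of such tensors by \Cref{obp-injective}. The equality $I \cap (I_0 \obp B) = I_0 \obp J$ would follow by chasing the commuting square that relates $i_h$ to the quotient maps $\mathrm{Id} \obp \pi_J$ and $\mathrm{Id} \oh \pi_J$, using injectivity of $i_h$ together with the kernel computation of \Cref{obp-kernel}. Finally $I \subseteq A \obp J + M \obp B$ would follow from \Cref{h-gamma}, which identifies $i_h(A \obp J + M \obp B)$ with $(A \oh J + M \oh B) \cap i_h(A \obp B)$, again invoking injectivity of $i_h$.

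With these ingredients in hand, I would run the reduction through the quotient by $I_0$. Since $I_0 \neq (0)$, the algebra $A/I_0$ has strictly fewer closed ideals than $A$, so the inductive hypothesis applies to the closed ideal $(\pi_{I_0} \obp \mathrm{Id})(I)$ of $(A/I_0) \obp B$, expressing it as a finite sum of product ideals. The kernel of $\pi_{I_0} \obp \mathrm{Id}$ is $I_0 \obp B$ by \Cref{kernel-phi-ot-psi}, and the same proposition computes preimages of product ideals of $(A/I_0) \obp B$ as product ideals of $A \obp B$ together with $I_0 \obp B$; combining this with the closedness of finite sums of product ideals (\Cref{sum-obp} and \Cref{finite-sum}) would produce a finite sum $L$ of product ideals with $I + I_0 \obp B = L + I_0 \obp B$. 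It then remains to recover $I$ itself, and here the data $I \cap (I_0 \obp B) = I_0 \obp J$ and $I \subseteq A \obp J + M \obp B$ are used to splice the product ideal $I_0 \obp J$ back in, through a modular-law type manipulation, so as to realize $I$ as a finite sum of product ideals.

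I expect this last step --- reconstructing $I$ exactly, rather than merely $I$ modulo $I_0 \obp B$ --- to be the main obstacle, exactly as in \cite{ass}. The difficulty is that passing to $A/I_0$ determines $I$ only up to the kernel $I_0 \obp B$, and the individual product-ideal summands handed back by the inductive hypothesis need not already lie inside $I$; one must intersect and recombine them, controlling their $B$-factors against $J$, and then verify that the resulting expression is simultaneously contained in $I$ and exhausts it. This bookkeeping is tightest precisely when $I_0$ is an essential ideal, so that $M = \mathrm{ann}(I_0)$ is small and the quotient-and-intersect recovery leaves the least slack; handling that case carefully, using the kernel identities of \Cref{obp-kernel} and \Cref{kernel-phi-ot-psi} and the closedness afforded by \Cref{sum-obp}, is where the real work of the proof lies.
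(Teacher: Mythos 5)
Your setup matches the paper's: induction on the number of closed ideals, base case via \Cref{obp-ideal}, choice of a minimal (hence simple) closed ideal $I_0$, transport of \Cref{ann-result} through $i_h$ using \Cref{a-ot-b-h} and \Cref{h-gamma} to get $I \cap (I_0 \obp B) = I_0 \obp J$ and $I \subseteq A \obp J + M \obp B$ with $M = \mathrm{ann}(I_0)$. (One small caveat there: the paper obtains $I \cap (I_0\obp B)= I_0\obp J$ directly from \Cref{obp-ideal} applied to the simple algebra $I_0$, and separately gets a Haagerup-level ideal $J_1$ from \Cref{ann-result}, proving only $J_1 \subseteq J$ via a slice-map argument; your claim that the \emph{same} $J$ works at both levels is not justified and is not needed.)

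The genuine gap is in the reconstruction step, and it is exactly where you say "the real work lies" --- but your proposed route does not close it, and the paper takes a different one. Quotienting all of $A \obp B$ by $I_0 \obp B$ only recovers $I + I_0 \obp B$, because $I$ does not contain the kernel $I_0 \obp B$ (it only contains $I_0 \obp J$), so \Cref{obp-kernel} does not let you write $I$ as the preimage of its image. Knowing in addition that $I \cap (I_0\obp B)= I_0\obp J$ does not rescue this: the modular law would give $I = L + I_0\obp J$ only if the finite sum of product ideals $L$ produced by pulling back were already contained in $I$, and the preimages $\pi_{I_0}^{-1}(I_r)\obp J_r$ handed back by \Cref{kernel-phi-ot-psi} need not lie in $I$. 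The paper avoids this entirely by first proving the decomposition $K = K\cap(A\obp J) + K\cap(M\obp B)$ --- this is where the containment $K \subseteq A\obp J + M\obp B$ is actually consumed, and its proof requires a quasi-central approximate identity of the form $f_\lambda + g_\lambda - f_\lambda g_\lambda$ together with the closedness from \Cref{sum-obp} --- and then applies the induction hypothesis to each summand separately: to $K\cap(M\obp B)$ directly (since $\nu(M)\leq n-1$), and to $K\cap(A\obp J)$ via the quotient map $\pi_{I_0}\obp \mathrm{Id}$ restricted to $A\obp J$, whose kernel $I_0\obp J$ \emph{is} contained in $K\cap(A\obp J)$, so that the preimage recovery of \Cref{obp-kernel} applies without slack. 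You would need to supply this splitting (or an equivalent device) to complete the argument.
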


 \begin{proof}
The proof is given by induction on the number of closed ideals in $A$,
call it $\nu(A)$. If $\nu(A) = 2$, then we are done by
\Cref{obp-ideal}. Let $n > 2$ and suppose that the assertion holds
for all $C^*$-algebras with less than $ n$ closed ideals, and let $A$
be a $C^*$-algebra with $\nu(A) = n$.

 Pick  a minimal non-zero closed ideal, say $I$, in $A$, which is
 clearly simple.  Let $K$ be a closed ideal in $A \obp B$, then $I
 \obp B \subseteq A \obp B$ by \Cref{obp-injective}, so that $K\cap (I
 \obp B)$ is a closed ideal in $I \obp B$. By \Cref{obp-ideal}, it is then
 equal to $I \obp J$ for some closed ideal $J$ in $B$.

Consider the closed ideal $K_h:=\overline{i_h(K)}^h$ in the Banach
algebra $ A \oh B$ (where $i_h$ is as in \Cref{obp-min}). By
\Cref{ann-result}, $K_h \cap (I \oh B) = I \oh J_1$ and $K_h \subseteq
A \oh J_1 + M\oh B$ for some closed ideal $J_1$ in $B$, where $M =
ann(I)$.

We wish to show, in fact, that $ K \subseteq A \obp J + M \obp B$ as
well. Note that $K \subseteq i_h^{-1}(K_h) \subseteq i_h^{-1}(A \oh
J_1 + M \oh B)$ and, by \Cref{h-gamma}, $i_h^{-1}(A \oh J_1 + M \oh B)
= A \obp J_1 + M \obp B$. So, it suffices to show
that $J_1 \subset J$. Note that, if $y \in J_1$, then $x \ot y \in I
\oh J_1 \subset K_h $ for any fixed $0 \neq x \in I$, so that, by
\Cref{a-ot-b-h}, $x \ot y \in K$ implying further that $ x \ot y \in K
\cap (I \obp B) = I \obp J$. Choose a $\varphi \in A^*$ such that
$\varphi(x) \neq 0$, then $R_\varphi(x\ot y) = \varphi(x) y \in J$,
where $R_{\varphi}:A \otimes B \rightarrow B$ is the right slice map
given by $ R_{\varphi}\left(\sum_1^n a_i \otimes
b_i\right)=\sum_{1}^{n} \varphi (a_i) b_i$. Hence $y \in J$.

 We now claim that $K \cap (A \obp J +M \obp B)=K\cap
 (A \obp J)+ K \cap (M \obp B)$ and show that each of
 the two closed ideals appearing in the sum on the right hand side, by induction
 hypothesis, are finite sums of closed ideals, which will then complete the proof.

 We first prove that $L := K \cap (A \obp J)$ is a finite sum of closed ideals. 
 Clearly $L$ contains $I\obp J$. 
 
 Corresponding to the complete quotient map $\pi_I: A \to A/I$, we
 have a quotient map $\pi_I \obp Id: A \obp J \to A/I \obp J$. By
 \Cref{obp-kernel}, $\ker (\pi_I \obp Id) = I \obp J$ and $(\pi_I \obp Id)(L)$
 is a closed ideal in $A/I \obp J$. Since $\nu (A/I )\leq n-1$, by
 induction hypothesis, $(\pi_I \obp Id)(L)= \sum_{r=1}^k I_r \obp J_r$,
 where $I_r$ and $J_r$ are closed ideals in $A/I $ and $J$,
 respectively. Thus, by \Cref{obp-kernel}  and \Cref{kernel-phi-ot-psi},
 $ L = \sum_{r=1}^k \pi_I^{-1}(I_r)\obp J_r +  I \obp J$.
 
Further, since $M$ cannot contain $I$, we have $\nu(M)\leq n - 1$.  So,
by induction hypothesis, the closed ideal $K \cap (M \obp B)$ is a
finite sum of product ideals.

Finally, it is easy to see that $ K\cap (A \obp J)+ K \cap (M \obp B)
\subseteq K \cap (A \obp J + M \obp B) $.  Let $z\in K \cap (A \obp J
+ M \obp B)$. By \cite[Proposition 4.11]{GJ}, the closed ideal $A \obp
J + M \obp B$ possesses a quasi-central approximate identity, say $\{
e_{\lambda}\}$, which as in \cite[Lemma 3.3]{ass}, can be taken to be
of the form $e_{\lambda} = f_{\lambda} + g_{\lambda} - f_{\lambda}
g_{\lambda}$ for some quasi-central approximate identities
$\{f_{\lambda}\}$ and $\{g_{\lambda}\}$ in $A \obp J$ and $M \obp B$,
respectively. Then, $e_{\lambda} z \ra z$ and $ze_{\lambda} \in K\cap
(A \obp J)+ K \cap (M \obp B)$ for every $\lambda$. This implies that
$K\cap (A \obp J)+ K \cap (M \obp B)$ is dense in $K \cap (A \obp J +
M \obp B) $ and, by \Cref{sum-obp}, we know that $K\cap (A \obp J)+ K
\cap (M \obp B)$, being a finite sum of product ideals, is closed.
\end{proof}

Based on above discussion, we easily deduce the following: 
\begin{cor}\label{consequences}
Let $A$ and $B$ be $C^*$-algebras and suppose $A$ contains only
finitely many closed ideals. Then, the following hold:
\begin{enumerate}
\item A finite sum of closed ideals in $A
\obp B$ is also a closed ideal.
\item Every closed ideal of $A \obp B$ contains a bounded approximate unit.
  \item Every closed ideal of $A \obp B$ is $*$-closed.
\end{enumerate}
\end{cor}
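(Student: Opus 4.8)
The plan is to reduce all three assertions to \Cref{ideals-finite}, which under the standing hypothesis presents every closed ideal of $A \obp B$ as a finite sum of product ideals, and then to invoke \Cref{sum-obp} together with the elementary behaviour of individual product ideals.

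For (1), let $K_1, \dots, K_r$ be closed ideals in $A \obp B$. By \Cref{ideals-finite} each $K_t$ is itself a finite sum of product ideals, so $\sum_{t=1}^r K_t$ is again a finite sum of product ideals. Being a sum of ideals it is manifestly an ideal, and by \Cref{sum-obp} a finite sum of closed product ideals is closed; hence $\sum_t K_t$ is a closed ideal. This sharpens \Cref{sum-obp}, which handled only sums of product ideals, and requires no work beyond citing the two results.

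For (2), fix a closed ideal $K$ and, using \Cref{ideals-finite}, write $K = \sum_{i=1}^n J_i \obp L_i$ with $J_i$ and $L_i$ closed ideals in $A$ and $B$ respectively. As recorded in the proof of \Cref{sum-obp}, each product ideal $J_i \obp L_i$ carries a bounded approximate identity, since every closed ideal of a $C^*$-algebra has one and \cite[Lemma 3.1]{JK-edin} transports this to the projective tensor product. It then remains to amalgamate these into a single bounded approximate identity for the finite sum, which I would do by induction on $n$: given a bounded approximate identity $\{f_\lambda\}$ for the closed ideal $J_1 \obp L_1 + \cdots + J_{n-1} \obp L_{n-1}$ (closed by \Cref{sum-obp}) and one $\{g_\mu\}$ for $J_n \obp L_n$, the elements $e_{\lambda,\mu} = f_\lambda + g_\mu - f_\lambda g_\mu$, indexed by the product directed set exactly as in \cite[Lemma 3.3]{ass}, furnish a bounded approximate identity for the sum. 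This amalgamation is where I expect the only genuine care to be needed: one must check that $\{e_{\lambda,\mu}\}$ acts approximately as a unit on each summand and remains uniformly bounded, which is the standard two-ideal computation but must be run in the Banach-algebra rather than the $C^*$ setting.

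For (3), start again from $K = \sum_{i=1}^n J_i \obp L_i$. Since closed ideals of $C^*$-algebras are automatically self-adjoint, each $J_i$ and $L_i$ is $*$-closed. The involution on $A \obp B$ is the isometric, hence continuous, extension of $a \ot b \mapsto a^* \ot b^*$, so on the dense subspace $J_i \ot L_i$ it takes values in $J_i^* \ot L_i^* = J_i \ot L_i$ and therefore preserves the closure $\overline{J_i \ot L_i} = J_i \obp L_i$, the identification of this closure as a subspace of $A \obp B$ being supplied by \Cref{obp-injective}. Thus each product ideal is $*$-closed, and a finite sum of self-adjoint subspaces is self-adjoint, whence $K = K^*$. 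The only subtlety here is the appeal to \Cref{obp-injective} to make sense of $J_i \obp L_i$ as a $*$-closed subspace sitting inside $A \obp B$, after which the conclusion is immediate.
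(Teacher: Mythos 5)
Your proposal is correct and follows exactly the route the paper intends: the paper offers no written proof beyond ``Based on above discussion, we easily deduce the following,'' and the intended deduction is precisely your reduction of all three claims to the decomposition of \Cref{ideals-finite} combined with \Cref{sum-obp}, the bounded approximate identities of product ideals from \cite[Lemma 3.1]{JK-edin}, and the self-adjointness of product ideals via \Cref{obp-injective}. The amalgamation $e_{\lambda,\mu}=f_\lambda+g_\mu-f_\lambda g_\mu$ you flag as the one point needing care is the same device the paper itself uses (citing \cite[Lemma 3.3]{ass}) in the proof of \Cref{ideals-finite}, so no new issues arise.
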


\subsection*{Some Examples}
We can now reap some immediate fruits of \Cref{ideals-finite} and
\Cref{consequences}.
\begin{enumerate}
  \item For any separable Hilbert space $H$, analogous to the
    structure of closed ideals of $B(H) \ota B(H)$ for $\ota = \oh$
    and $\oop$ (see \cite{ass,jk11}), $B(H)\obp B(H)$ contains only
    four non-trivial closed ideals, namely, $ B(H) \obp K(H) + K(H)
    \obp B(H), B(H) \obp K(H), K(H) \obp B(H)$ and $ K(H) \obp K(H)$.
    In particular, $B(H) \obp B(H)$ has a unique maximal ideal,
    namely, $B(H) \obp K(H) + K(H) \obp B(H) $.

\item For a locally compact Hausdorff space $X$ and a separable
  Hilbert space $H$, the closed ideals of $B(H) \obp C_0(X)$ are given
  by $\sum_{i=1}^n I_i\obp I(E_i)$, for closed subsets $E_i$ of $X$,
  where $I_i=K(H) $ or $B(H)$, and $$I(E_i) := \{ f\in C_0(X): f(x)=0
  \ \text{for all} \,x\in E_i\}.$$

  Note that, by \Cref{maximal-ideals}, $B(H) \obp I(\{x\}) + K(H) \obp
  C_0(X)$ is a maximal ideal in $B(H) \obp C_0(X)$ for each $x \in
  C_0(X)$.
  \end{enumerate}

\subsection{Minimal and maximal ideals of $A \obp B$}

The structure of closed minimal ideals of $A \obp
B$ turns out to be an immediate consequence of \Cref{elementary-tensor}.
\begin{prop}
  Let $A$ and $B$ be $C^*$-algebras. Then, a closed ideal $J$ in $A
  \obp B$ is minimal if and only if it is a product ideal of the form
  $J = K \obp L$ for some minimal closed ideals $K$ and $L$ in $A$ and
  $B$, respectively.
\end{prop}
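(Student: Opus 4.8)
The plan is to prove both implications directly, letting the elementary-tensor and simplicity machinery already established do most of the work.

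For the forward implication, suppose $J$ is a minimal closed ideal in $A \obp B$; in particular $J \neq (0)$. By \Cref{elementary-tensor}, $J$ contains a non-zero elementary tensor $a \ot b$ together with the associated product ideal $K_0 \obp L_0$, where $K_0$ and $L_0$ are the non-zero closed ideals of $A$ and $B$ generated by $a$ and $b$. Since $K_0 \obp L_0$ is a non-zero closed ideal of $A \obp B$ sitting inside $J$ (the inclusion being meaningful by \Cref{obp-injective}), minimality of $J$ forces $J = K_0 \obp L_0$; it then remains only to show that $K_0$ and $L_0$ are themselves minimal.

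To see that $K_0$ is minimal I would argue by contradiction: if $K$ is a non-zero closed ideal of $A$ with $K \subsetneq K_0$, then $K \obp L_0$ is a non-zero closed ideal of $A \obp B$ contained in $K_0 \obp L_0 = J$, and I claim the containment is strict. Pick $a_0 \in K_0 \setminus K$ and, by Hahn--Banach, a functional $\varphi \in A^*$ with $\varphi|_K = 0$ and $\varphi(a_0) \neq 0$; pick $0 \neq c \in L_0$ and $\psi \in B^*$ with $\psi(c) \neq 0$. The bilinear form $(x,y) \mapsto \varphi(x)\psi(y)$ defines a functional in $(A \obp B)^*$ (recall that $(A \obp B)^*$ is the space of bounded bilinear forms, as used in \Cref{kernel-phi-ot-psi}) which annihilates $K \ot L_0$, hence all of $K \obp L_0 = \ol{K \ot L_0}$, yet does not vanish on $a_0 \ot c \in J$. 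Thus $a_0 \ot c \in J \setminus K \obp L_0$, contradicting minimality of $J$. Hence $K_0$ is minimal, and symmetrically so is $L_0$.

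For the reverse implication, let $K$ and $L$ be minimal closed ideals of $A$ and $B$. The key preliminary observation is that a minimal closed ideal of a $C^*$-algebra is topologically simple: a closed ideal of the closed ideal $K$ is again a closed ideal of $A$ (one uses an approximate identity of $K$ to absorb multiplication by elements of $A$), so minimality of $K$ leaves it with no non-trivial closed ideals, and likewise for $L$. Consequently, by \Cref{obp-simple}, the Banach $*$-algebra $K \obp L$ is topologically simple. Now $K \obp L$ is a non-zero closed ideal of $A \obp B$, and any non-zero closed ideal $J \subseteq K \obp L$ is automatically a closed ideal of the subalgebra $K \obp L$, since multiplication by elements of $K \obp L \subseteq A \obp B$ keeps $J$ inside $J$; topological simplicity then forces $J = K \obp L$, so $K \obp L$ is minimal. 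The only genuinely delicate point is the strictness claim $K \obp L_0 \subsetneq K_0 \obp L_0$ in the forward direction: it is not enough that $K \subsetneq K_0$, and the separating-functional (slice-map) argument above, resting on the cross-norm property and the dual description of $\obp$, is exactly what supplies the missing element. Everything else reduces to the already-proven facts \Cref{elementary-tensor}, \Cref{obp-injective} and \Cref{obp-simple}.
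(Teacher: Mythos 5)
Your proof is correct and takes essentially the approach the paper intends: the paper merely cites the verbatim argument of \cite[Proposition 3.9]{JK-edin}, which rests on the same two pillars you use, namely \Cref{elementary-tensor} combined with minimality of $J$ for the forward direction, and the fact that a minimal closed ideal of a $C^*$-algebra is topologically simple together with \Cref{obp-simple} for the converse. Your separating-functional (slice-map) verification that $K \obp L_0 \subsetneq K_0 \obp L_0$ is the standard way to supply the strictness, so there is nothing further to add.
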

    The proof of \cite[Proposition 3.9]{JK-edin} works verbatim
    for above identification.
    \vspace*{2mm}
    
In order to analyze the structure of maximal ideals, we will use the
concept of {\em Wiener property} for Banach $*$-algebras.  Recall that
a Banach $*$-algebra $A$ said to have the Wiener property if every proper
closed ideal of $A$ is annihilated by some irreducible
$*$-representation of $A$ on some Hilbert space.  It is well known
that every $C^*$-algebra has Wiener property.

\begin{lemma}\label{wiener}
  Let $A$ and $B$ be $C^*$-algebras. Then, the Banach $\ast$-algebra
  $A\obp B$ has the Wiener property.
\end{lemma}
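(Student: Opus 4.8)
The plan is to push the problem over to the $C^*$-algebra $A \omin B$, where the Wiener property is already at hand (every $C^*$-algebra has it), using the contractive injective $*$-homomorphism $i_{\min} : A \obp B \ra A \omin B$ of \Cref{obp-min}. Concretely, I would fix an arbitrary proper closed ideal $I$ in $A \obp B$ and form $I_{\min} = \ol{i_{\min}(I)} \subseteq A \omin B$. Since $i_{\min}(A \obp B)$ contains $A \ot B$ and is therefore norm-dense in $A \omin B$, the closed subspace $I_{\min}$ is genuinely a two-sided closed ideal of $A \omin B$ (take closures in $(A\omin B)\, i_{\min}(I) \subseteq i_{\min}(I)$).

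The crux of the argument is to verify that $I_{\min}$ is a \emph{proper} ideal of $A \omin B$. Suppose to the contrary that $I_{\min} = A \omin B$. Then every elementary tensor $a \ot b$ lies in $I_{\min}$, so \Cref{a-ot-b} forces $a \ot b \in I$ for all $a \in A$, $b \in B$; hence $A \ot B \subseteq I$, and as $I$ is closed while $A \ot B$ is dense in $A \obp B$, this would give $I = A \obp B$, contradicting properness. Thus $I_{\min}$ is a proper closed ideal of the $C^*$-algebra $A \omin B$. This is the step where the partial injectivity machinery of Section 2, distilled into \Cref{a-ot-b}, is doing the real work; everything else is a formal transfer along $i_{\min}$.

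With properness secured, I would invoke the Wiener property of the $C^*$-algebra $A \omin B$ to obtain an irreducible $*$-representation $\rho$ of $A \omin B$ on a Hilbert space $H$ with $\rho(I_{\min}) = (0)$. Setting $\tilde{\rho} := \rho \circ i_{\min}$, the composite of a $*$-homomorphism with a $*$-representation is again a $*$-representation of $A \obp B$ on $H$. It is irreducible because $\tilde{\rho}(A \obp B) \supseteq \rho(A \ot B)$ is norm-dense in $\rho(A \omin B)$, so the two have the same (trivial) commutant; and it annihilates $I$ since $\tilde{\rho}(I) = \rho(i_{\min}(I)) \subseteq \rho(I_{\min}) = (0)$. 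As $I$ was an arbitrary proper closed ideal, $A \obp B$ has the Wiener property.

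I expect the only nontrivial point to be the properness of $I_{\min}$ isolated above; the remaining verifications (that $I_{\min}$ is an ideal, that $\tilde\rho$ is an irreducible $*$-representation, and that it kills $I$) are routine once the density of $i_{\min}(A\obp B)$ in $A \omin B$ and \Cref{a-ot-b} are in place.
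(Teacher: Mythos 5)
Your proposal is correct and follows essentially the same route as the paper: pass to $I_{\min}$ in $A\omin B$, use \Cref{a-ot-b} to see it is proper, invoke the Wiener property of the $C^*$-algebra $A\omin B$, and pull the irreducible representation back along $i_{\min}$, checking irreducibility via the commutant of the dense image $\rho(A\ot B)$. Your explicit justification of the properness of $I_{\min}$ is exactly the intended (but unspelled) use of \Cref{a-ot-b} in the paper's proof.
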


\begin{proof}
Consider a proper closed two-sided ideal $J$ of $A\obp B$. By Theorem
\ref{a-ot-b}, $J_{\min}$ is also a proper closed two-sided ideal of
the $C^*$-algebra $A\omin B$. Since every $C^*$-algebra has the Wiener
property, $J_{\min}$ is annihilated by an irreducible
$*$-representation, say, $\pi:A\omin B \rightarrow B(H)$.  So, we have
a $*$-representation $\hat{\pi}:= \pi \circ i$ of $A\obp B$ on $H$
with $\hat{\pi}(J)=\{0\}$, where $i : A \obp B \ra A \omin B$ is the
canonical injective $*$-homomorphism as in \Cref{obp-min}. Also, the
relation $\hat{\pi}(A\otimes B) = \pi (A\otimes B)$ yields
\[
\hat{\pi}(A\obp B)^{\prime} \subseteq \hat{\pi}(A\otimes B)' =
\pi(A\otimes B)^{\prime} = \pi(A\omin B)^{\prime}= \mathbb{C}I.
\] Thus, $\hat{\pi}$ is irreducible and $A\obp B$ has
Wiener property.
\end{proof}

\begin{lemma}\label{irr}
  Let $A$ and $B$ be $C^*$-algebras and $\pi$ be an irreducible
  $*$-representation of $A\obp B$ on a Hilbert space $H$. Then there
  exist $*$-representations $\pi_1$ and $\pi_2$ of $A$ and $B$,
  respectively, on $H$ with commuting ranges such that
$$ \pi(a\ot b) = \pi_1(a) \pi_2(b) \,\,\, \text{for all} \,\, a\in A, b\in B.$$
Moreover, $\pi_1$ and $\pi_2$ are both factor representations. 
\end{lemma}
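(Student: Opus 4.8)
The plan is to build $\pi_1$ and $\pi_2$ out of $\pi$ by ``inserting an approximate identity'' into the missing factor, which is what circumvents the absence of units in $A$ and $B$. First I would record that, since $\pi$ is irreducible (and $H \neq (0)$), it is nondegenerate, so that $\{\pi(x \ot y)\eta : x \in A,\, y \in B,\, \eta \in H\}$ has dense linear span in $H$; here one uses that $A \ot B$ is dense in $A \obp B$ and $\pi$ is norm-continuous. Fixing positive bounded approximate identities $\{e_\lambda\}$ for $A$ and $\{f_\mu\}$ for $B$, say of norm $\leq 1$, the definitions I have in mind are
\[
\pi_1(a) := \mathrm{SOT}\text{-}\lim_\mu \pi(a \ot f_\mu), \qquad \pi_2(b) := \mathrm{SOT}\text{-}\lim_\lambda \pi(e_\lambda \ot b).
\]

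To see these make sense: since $\|a \ot f_\mu\|_\gamma = \|a\|\,\|f_\mu\| \leq \|a\|$, the net $\{\pi(a \ot f_\mu)\}$ is uniformly bounded, and on a spanning vector $\pi(x \ot y)\eta$ one has $\pi(a \ot f_\mu)\pi(x \ot y)\eta = \pi(ax \ot f_\mu y)\eta \to \pi(ax \ot y)\eta$, because $f_\mu y \to y$ forces $ax \ot f_\mu y \to ax \ot y$ in $A \obp B$ and $\pi$ is continuous. A uniformly bounded net converging on a dense subspace converges in the strong operator topology to a bounded operator, so $\pi_1(a)$ (and symmetrically $\pi_2(b)$) is well defined and obeys the key identity $\pi_1(a)\pi(x \ot y)\eta = \pi(ax \ot y)\eta$ (respectively $\pi_2(b)\pi(x \ot y)\eta = \pi(x \ot by)\eta$).

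With these identities the algebraic verifications reduce to bookkeeping on the dense subspace: linearity is immediate, multiplicativity follows from $\pi_1(a)\pi_1(a')\pi(x \ot y)\eta = \pi_1(a)\pi(a'x \ot y)\eta = \pi(aa'x \ot y)\eta = \pi_1(aa')\pi(x \ot y)\eta$, and the product formula from $\pi_1(a)\pi_2(b)\pi(x \ot y)\eta = \pi_1(a)\pi(x \ot by)\eta = \pi(ax \ot by)\eta = \pi(a \ot b)\pi(x \ot y)\eta$; running the same computation with the two factors interchanged gives $\pi_2(b)\pi_1(a) = \pi_1(a)\pi_2(b)$, i.e.\ commuting ranges. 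The one delicate point, which I expect to be the main obstacle, is the $*$-property, since the adjoint is not strongly continuous: choosing $f_\mu = f_\mu^*$ I have $\pi(a \ot f_\mu)^* = \pi(a^* \ot f_\mu)$, and passing to weak-operator limits (under which the adjoint \emph{is} continuous) identifies $\pi_1(a)^* = \pi_1(a^*)$. This weak-limit device, together with the self-adjointness of the chosen approximate identity, is precisely what handles the non-unital case.

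For the factor assertion, set $\mathcal{M} := \pi_1(A)''$ and $\mathcal{N} := \pi_2(B)''$. Commutativity of the ranges gives $\mathcal{N} \subseteq \mathcal{M}'$ and $\mathcal{M} \subseteq \mathcal{N}'$. Since $\pi(a \ot b) = \pi_1(a)\pi_2(b)$, the von Neumann algebra generated by $\mathcal{M} \cup \mathcal{N}$ contains $\pi(A \ot B)$, whence, by density of $A \ot B$ in $A \obp B$ and irreducibility of $\pi$, it equals $\pi(A \obp B)'' = B(H)$. Now the center $Z(\mathcal{M}) = \mathcal{M} \cap \mathcal{M}'$ commutes with $\mathcal{M}$ by definition and with $\mathcal{N}$ because $\mathcal{N} \subseteq \mathcal{M}'$ commutes with $\mathcal{M} \supseteq Z(\mathcal{M})$; hence $Z(\mathcal{M})$ commutes with all of $B(H)$ and so $Z(\mathcal{M}) = \C I$. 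Thus $\pi_1$ is a factor representation, and the symmetric argument applied to $\mathcal{N}$ gives the same for $\pi_2$.
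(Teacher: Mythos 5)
Your proposal is correct, and its second half (the factor assertion) is essentially the paper's argument: both reduce to showing that the commutant of $\pi_1(A)\cup\pi_2(B)$ is $\C I$ via irreducibility of $\pi$ and density of $A\ot B$ in $A\obp B$; you phrase it as ``$Z(\mathcal{M})$ commutes with a generating set of $B(H)$'' while the paper writes the equivalent chain $\mathcal{M}\cap\mathcal{M}'=(\pi_1(A)'\cup\pi_1(A))'\subseteq(\pi_2(B)\cup\pi_1(A))'=\C I$. The real difference is in the first half: the paper obtains the commuting pair $(\pi_1,\pi_2)$ by citing Takesaki's Lemma IV.4.1, whereas you reconstruct it from scratch via $\pi_1(a)=\mathrm{SOT}\text{-}\lim_\mu\pi(a\ot f_\mu)$ on the dense span of $\pi(A\ot B)H$, handling the adjoint by passing to weak-operator limits with a self-adjoint approximate identity. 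This is precisely the standard proof of the cited lemma, carried out correctly (the only points worth making explicit are that a $*$-representation of the Banach $*$-algebra $A\obp B$ is automatically contractive, and that a nonzero irreducible representation is nondegenerate, both standard); so your version buys self-containedness at the cost of length, while the paper's buys brevity by outsourcing the decomposition. No gap.
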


\begin{proof}
Since $\pi$ is a $*$-representation of $A\ot B$, by \cite[Lemma
  IV.4.1]{Tak}, there exist $*$-representations $\pi_1$ and $\pi_2$ of
$A$ and $B$ on $H$ with commuting ranges such that
$$ \pi(a\otimes b) = \pi_1 (a) \pi_2 (b)\,\, \text{for all} \,a\in
A,\,b\in B. $$

Now, $\pi(A\otimes B)= \pi_1(A) \pi_2(B)$, so that $ \pi(A\obp B)
\subseteq \ol{\pi_1(A) \pi_2(B)}$. Irreducibility of $\pi$ gives
$$ \big(\pi_1(A) \pi_2(B)\big)^{\prime}=\Big( \ol{\pi_1(A)
  \pi_2(B)}\Big)^{\prime} \subseteq \pi(A \obp B)^{\prime} =
\mathbb{C} I.
$$ If $ M := \pi_1(A)^{\prime\prime}$, then we have \begin{eqnarray*}
  M \cap M^{\prime} & = & \pi_1(A)^{\prime\prime} \cap
  \pi_1(A)^{\prime}\\
  &=& \big(\pi_1(A)^{\prime} \cup
  \pi_1(A)\big)^{\prime} \\
  &\subseteq & (\pi_2(B) \cup
  \pi_1(A))^{\prime} \, \qquad \qquad ( \text{as} \, \pi_2(B) \subseteq
  \pi_1(A)')\\
  & = &\pi_1(A)^{\prime} \cap
  \pi_2(B)^{\prime}\\
  & \subseteq & \{\pi_1(A)
  \pi_2(B) \}^{\prime}\\ &=& \mathbb{C} I.
\end{eqnarray*}
Thus, $\pi_1$ (and similarly $\pi_2$) is a factor representation. \end{proof}

Analogous to \cite[Theorem 5.6]{ass},   \cite[Theorem
  3.10]{JK-edin} and \cite[Theorem 9]{jk11}, we  now obtain the
following characterizations of maximal and maximal modular
ideals.

 \begin{theorem}\label{maximal-ideals}
 Let $A$ and $B$ be $C^*$-algebras. Then, a closed ideal $J$ in $A
 \obp B$ is maximal if and only if it is of the form $J = A \obp N + M
 \obp B$ for some maximal  ideals $M$ and $N$ in $A$ and $B$,
 respectively.
   \end{theorem}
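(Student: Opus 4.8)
The plan is to prove both implications simultaneously through the quotient description $(A\obp B)/(A\obp N + M\obp B)\cong (A/M)\obp(B/N)$, combined with the simplicity criterion \Cref{obp-simple}. For sufficiency, suppose $M$ and $N$ are maximal ideals of $A$ and $B$. I would first note that $A\obp N + M\obp B$ is a closed ideal by \Cref{sum-obp}. Applying \Cref{kernel-phi-ot-psi} to the quotient maps $\pi_M\colon A\to A/M$ and $\pi_N\colon B\to B/N$ gives $\ker(\pi_M\obp\pi_N)=\ol{M\ot B+A\ot N}=A\obp N+M\obp B$, and since $\pi_M\obp\pi_N$ is a quotient map onto $(A/M)\obp(B/N)$, this yields a $*$-isomorphism $(A\obp B)/(A\obp N+M\obp B)\cong (A/M)\obp(B/N)$. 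As $M,N$ are maximal, $A/M$ and $B/N$ are topologically simple $C^*$-algebras, so by \Cref{obp-simple} the quotient is topologically simple; hence $A\obp N+M\obp B$ has no proper closed ideal strictly containing it, i.e.\ it is maximal.

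For necessity, let $J$ be a maximal closed ideal. Since $J$ is proper and $A\obp B$ has the Wiener property (\Cref{wiener}), $J$ is annihilated by some irreducible $*$-representation $\pi$ on a Hilbert space $H$; as $\ker\pi$ is then a proper closed ideal containing $J$, maximality forces $J=\ker\pi$. By \Cref{irr} there are factor representations $\pi_1,\pi_2$ of $A,B$ with commuting ranges satisfying $\pi(a\ot b)=\pi_1(a)\pi_2(b)$. I would set $M:=\ker\pi_1$ and $N:=\ker\pi_2$. For $n\in N$, $a\in A$ one has $\pi(a\ot n)=\pi_1(a)\pi_2(n)=0$, and symmetrically $\pi(m\ot b)=0$ for $m\in M$, so $A\ot N+M\ot B\subseteq\ker\pi=J$ and therefore $A\obp N+M\obp B\subseteq J$.

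The substantive task is then the reverse inclusion $J\subseteq A\obp N+M\obp B$, after which maximality of $M,N$ will come for free. Because $A\obp N+M\obp B\subseteq J$, the representation $\pi$ descends to an irreducible representation $\bar\pi$ of $(A/M)\obp(B/N)$ (via the isomorphism from the sufficiency part) with $\bar\pi((a+M)\ot(b+N))=\pi_1(a)\pi_2(b)$, whose component representations $\bar\pi_1,\bar\pi_2$ of $A/M,B/N$ are now \emph{faithful} factor representations. By \Cref{obp-faithful} it suffices to show $\bar\pi$ is faithful on the algebraic tensor product $(A/M)\ot(B/N)$, i.e.\ that the product map $x\ot y\mapsto\bar\pi_1(x)\bar\pi_2(y)$ is injective. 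Granting this, $\ker\bar\pi=(0)$ gives $J=A\obp N+M\obp B$; then \Cref{kernel-phi-ot-psi} identifies $(A\obp B)/J\cong(A/M)\obp(B/N)$, and maximality of $J$ makes this quotient topologically simple, so \Cref{obp-simple} forces $A/M$ and $B/N$ to be topologically simple, i.e.\ $M$ and $N$ are maximal.

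The hard part will be precisely this injectivity of the product map, and the essential leverage is the factor property delivered by \Cref{irr}: since $\bar\pi_1(A/M)''$ is a factor commuting with $\bar\pi_2(B/N)''$, two commuting von Neumann algebras one of which is a factor generate a copy of their von Neumann tensor product, so the product map is a faithful normal homomorphism on $\bar\pi_1(A/M)''\odot\bar\pi_2(B/N)''$ and hence injective on the (sub)algebraic tensor product $(A/M)\ot(B/N)$. \Cref{obp-faithful} then upgrades this to faithfulness on all of $(A/M)\obp(B/N)$, closing the argument; I expect this to parallel the treatment in \cite{ass} and \cite{JK-edin}.
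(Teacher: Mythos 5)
Your proposal is correct and follows essentially the same route as the paper: sufficiency via \Cref{sum-obp}, \Cref{kernel-phi-ot-psi} and \Cref{obp-simple}; necessity via the Wiener property (\Cref{wiener}), the factorization into commuting factor representations (\Cref{irr}), the inclusion $A\obp N+M\obp B\subseteq J$ from maximality, and the reverse inclusion by showing the induced representation of $(A/M)\obp(B/N)$ is faithful on the algebraic tensor product using the factor property (the injectivity of $\mathcal{R}\ot\mathcal{R}'\to B(H)$ for a factor $\mathcal{R}$) and then upgrading with \Cref{obp-faithful}. The only cosmetic difference is that the paper routes the descended representation through $\ot^{\max}$ to justify boundedness, whereas you descend directly through the quotient; both are fine.
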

   
   \begin{proof}
    Let $J = A \obp N + M \obp B$, where $M$ and $N$ are maximal
    ideals in $A$ and $B$, respectively. Note that, by \Cref{sum-obp},
    $A\obp N + M\obp B$ is a closed ideal in $A\obp B$. For the
    canonical quotient maps $\pi_1 : A \ra A/M$ and $\pi_2 : B \ra
    B/N$, we have $ J =\text{ker}(\pi_1 \otimes \pi_2)$ and there is
    an isomorphism between $(A\obp B)/J $ and $(A/M)\obp (B/N)$ by
    \Cref{kernel-phi-ot-psi}. Since $A/M$ and $B/N$ are both simple,
    so is $(A\obp B)/J$, by \Cref{obp-simple}. Thus, $J$ is maximal in
    $A\obp B$.

Conversely, let $J$ be a maximal ideal in $A\obp B$. As seen in
\Cref{wiener}, $A\obp B$ has the Wiener property; so, there exists a
non-zero irreducible $*$-representation $\pi$ of $A\obp B$ on a
Hilbert space $H$, such that $\pi(J)= (0)$. Then, by \Cref{irr}, there
exist factor $*$-representations $\pi_1$ and $\pi_2$ of $A$ and $B$ on
$H$ with commuting ranges such that $ \pi(a\otimes b) = \pi_1 (a)
\pi_2 (b)$ for all $a\in A$ and $b\in B $. Set $M= \ker \pi_1,\,N=
\ker \pi_2$ and $L= A\obp N + M\obp B$. We first show that $ L = J$.

 Clearly, $\pi(M\obp B)= (0) = \pi(A\obp N)$, which gives $\pi(J+L)=
 (0)$. Since $\pi$ is non-zero, this shows that $J+L$ is a proper
 ideal of $A\obp B$. Since $J + L$ contains $J$, by maximality of $J$,
 we have $L \subseteq J$, i.e., $ A\obp N + M \obp B \subseteq J.$

 By \Cref{kernel-phi-ot-psi} and \Cref{sum-obp} we have $\ker(\pi_M
 \obp \pi_N) = A \obp N + M \obp B$, so, for the reverse inclusion, it
 suffices to show that $J \subseteq \ker(\pi_M \obp \pi_N)$, where
 $\pi_M$ and $\pi_N$ are the natural quotient maps.  Note that the
 representations $\pi_1$ and $\pi_2$ induce faithful commuting
 representations $\tilde{\pi}_1$ of $A/M$ and $\tilde{\pi}_2$ of $B/N$
 on $H$.  Then, by a universal property of $\ot^{\max}$ (see
 \cite[Proposition IV.4.7]{Tak}), there exists a bounded
 $*$-representation $\pi_0:(A/M) \ot^{\max} (B/N) \ra B(H)$ such that
 $\pi_0(x\otimes y) = \tilde{\pi}_1 (x)\tilde{\pi}_2(y)$ for all $x\in
 A/M$ and $ y\in B/N $. Since $\|\cdot \|_{\max} \leq \| \cdot
 \|_{\gamma}$, the identity map on $(A/M) \ot (B/N)$ extends to a
 contractive $*$-homomorphism, say, $i: (A/M) \obp (B/N) \ra (A/M)
 \otimes^{\max} (B/N)$. In particular, $\theta:= \pi_0 \circ i$ is a
 $*$-representation of $ (A/M) \obp (B/N)$ on $H$.

It is easy to verify that $ \pi = \theta \circ \, (\pi_M \obp \pi_N) $
on $A\otimes B$, so by continuity we have $ \pi = \theta \circ \,
(\pi_M \obp \pi_N) $, which further gives $ \theta ( (\pi_M \obp
\pi_N)(J))=0 $. We now claim that $\theta$ is faithful on $(A/M) \obp
(B/N)$, which will yield $(\pi_M \obp \pi_N)(J)= (0)$, as was asserted
above. Note that, by \Cref{obp-faithful}, it suffices to show that
$\theta$ is faithful on $(A/M) \otimes (B/N)$. Since $\pi_1$ and
$\pi_2$ are both factor representations, so are the representations
$\tilde{\pi}_1$ and $\tilde{\pi}_2$ because
\[
\tilde{\pi}_1(A/M)^{\prime\prime} = \pi_1(A)^{\prime\prime} \quad \text{and}\quad
\tilde{\pi}_2(B/N)^{\prime\prime} = \pi_2(B)^{\prime\prime}.
\]
Now, for the factor $\mathcal{R}= \tilde{\pi}_1(A/M)^{\prime \prime}$,
the map $$\mathcal{R} \otimes \mathcal{R}^{\prime} \ni \Sigma_{i=1}^n
x_i \otimes x_i^{\prime} \stackrel{\rho}{\mapsto} \Sigma_{i=1}^n x_i
x_i^{\prime} \in B(H)$$ is an injective homomorphism, by
\cite[Proposition IV.4.20]{Tak}. Suppose $ \theta\big(\sum_{i=1}^n x_i
\otimes y_i\big) = 0$ for some $\sum_{i=1}^n x_i
\otimes y_i \in (A/M) \ot (B/N)$, which gives
$$
0 = \theta\Big(\sum_i x_i \otimes y_i\Big) = \sum_i
\tilde{\pi}_1(x_i) \tilde{\pi}_2 (y_i) = \rho\Big( \sum_i
\tilde{\pi}_1(x_i) \otimes \tilde{\pi}_2 (y_i)\Big).
$$ Note that $\tilde{\pi}_2(y_i) \in \tilde{\pi}_1(A/M)' =
\mcal{R}'''= \mcal{R}' $ for all $i$. Since $\rho$ is injective, we
obtain $$
(\tilde{\pi}_1 \ot \tilde{\pi}_2) \Big(\sum_i x_i \otimes y_i\Big) =
\sum_i \tilde{\pi}_1(x_i) \otimes \tilde{\pi}_2 (y_i) = 0.
$$ Further,
since $ \tilde{\pi}_1$ and $ \tilde{\pi}_2$ are both injective, so is $
\tilde{\pi}_1 \otimes \tilde{\pi}_2$ and hence $\sum_i x_i \otimes y_i
=0$.  This proves our claim.

Finally, since $J = \ker(\pi_M \obp \pi_N)$, $(A\obp B)/J$ is
isomorphic to $(A/M) \obp (B/N)$.  So, by \Cref{obp-simple}, it
follows that $M$ and $N$ are both maximal in $A$ and $B$,
respectively.
   \end{proof}

It is known that an ideal in a Banach algebra is maximal modular if and
only if it is maximal and modular. The above structure of maximal
ideals immediately yields the structure of maximal modular ideals as
well.
\begin{theorem}\label{maximal-modular}
  Let $A$ and $B$ be $C^*$-algebras. Then, a closed ideal $J$ of
  $A\obp B$ is maximal modular if and only if it is of the form $J = A
  \obp N + M \obp B$ for some maximal modular ideals $M$ and $N$ in
  $A$ and $B$, respectively.
   \end{theorem}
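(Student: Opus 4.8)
The plan is to invoke the characterization of maximal modular ideals as precisely those ideals that are simultaneously maximal and modular (quoted just above the statement), and to combine it with the description of maximal ideals already obtained in \Cref{maximal-ideals}. Since that theorem says a closed ideal $J$ of $A\obp B$ is maximal if and only if $J = A\obp N + M\obp B$ for maximal ideals $M$ and $N$ of $A$ and $B$, the only extra point to settle is when such a $J$ is modular, and to show that this happens exactly when $M$ and $N$ are themselves modular (equivalently maximal modular, once maximality is in hand).

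Recall that a closed ideal $J$ of a Banach algebra $E$ is modular precisely when the quotient $E/J$ is unital. So I would first record that, for $J = A\obp N + M\obp B$ with $M,N$ maximal, the proof of \Cref{maximal-ideals} (through \Cref{kernel-phi-ot-psi}) already supplies a Banach-algebra isomorphism $(A\obp B)/J \cong (A/M)\obp(B/N)$. Hence modularity of $J$ is equivalent to unitality of $(A/M)\obp(B/N)$, and the whole theorem reduces to the claim: for the simple $C^*$-algebras $C := A/M$ and $D := B/N$, the Banach $*$-algebra $C\obp D$ is unital if and only if both $C$ and $D$ are unital.

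One direction of the claim is immediate: if $C$ and $D$ have units $1_C$ and $1_D$, then $1_C\ot 1_D$ acts as a unit on every elementary tensor, hence, by density of $C\ot D$ in $C\obp D$ and joint continuity of multiplication, is a two-sided unit for $C\obp D$. For the converse I would route through the minimal tensor product. If $u$ is a unit for $C\obp D$, then its image under the contractive injective $*$-homomorphism $i_{\min}:C\obp D\ra C\omin D$ of \Cref{obp-min} is a unit for the dense subalgebra $i_{\min}(C\obp D)\supseteq C\ot D$ of $C\omin D$ and therefore, by continuity, a unit for the $C^*$-algebra $C\omin D$. It then remains to invoke the standard fact that a minimal tensor product of $C^*$-algebras can be unital only when both factors are; concretely, representing $C$ and $D$ nondegenerately and faithfully, the unit of $C\omin D$ must act as the identity operator on $H_C\ot H_D$, and compressing by a slice map $\mathrm{id}_C\ot\omega_\eta$ (with $\omega_\eta$ a vector state on $D$) produces the identity operator inside $C$, forcing $C$ to be unital; symmetry handles $D$.

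Assembling the pieces gives both implications. For the forward direction, $J$ maximal modular yields $M,N$ maximal by \Cref{maximal-ideals}, and unitality of the quotient together with the claim forces $A/M,B/N$ unital, i.e.\ $M,N$ modular, hence maximal modular. For the reverse direction, $M,N$ maximal modular make $A/M,B/N$ simple and unital, so $(A/M)\obp(B/N)$ is unital by the claim and topologically simple by \Cref{obp-simple}; thus $J = A\obp N + M\obp B$ (closed by \Cref{sum-obp}) is maximal by \Cref{maximal-ideals} and modular, i.e.\ maximal modular. The main obstacle is the converse half of the unitality claim; everything else is bookkeeping on top of results already established.
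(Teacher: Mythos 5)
Your proposal is correct, and the overall skeleton coincides with the paper's: both reduce the theorem, via \Cref{maximal-ideals} and the isomorphism $(A\obp B)/J \cong (A/M)\obp (B/N)$ from \Cref{kernel-phi-ot-psi}, to the single question of when $(A/M)\obp (B/N)$ is unital, and both dispose of the easy direction by noting that $1\ot 1$ is a unit by density. Where you genuinely diverge is the converse of the unitality claim. The paper settles it in one line by citing Loy's theorem (\cite[Theorem 1]{loy}), a general Banach-algebra result asserting that if a tensor product of Banach algebras has an identity then so do both factors. You instead push the putative unit of $C\obp D$ forward along the contractive injective $*$-homomorphism $i_{\min}$ of \Cref{obp-min}, observe that its image is a unit for the dense subalgebra containing $C\ot D$ and hence for the $C^*$-algebra $C\omin D$, and then extract units for $C$ and $D$ by slicing with vector states in a faithful nondegenerate representation. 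This is a valid, self-contained argument: the paper's route is shorter and applies verbatim to arbitrary Banach algebras, while yours stays entirely inside the $C^*$-machinery already assembled in the paper (the embedding $i_{\min}$ and slice maps) and avoids the external reference; it also does not actually need the simplicity of $C$ and $D$ that you carry along, only that $i_{\min}$ is an injective homomorphism with dense range containing $C\ot D$.
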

   \begin{proof}
Let $J = A\obp N + M \obp B$ for some maximal modular ideals $M$ and
$N$ in $A$ and $B$, respectively. Since $M$ and $N$ are both maximal
ideals, so is $J$, by \Cref{maximal-ideals}. Also, by
\Cref{kernel-phi-ot-psi}, $(A\obp B)/J$ and $\big(A/M\big) \obp
\big(B/N\big)$ are isomorphic Banach $*$-algebras. Since $A/M$ and
$B/N$ are both unital, so is $(A\obp B)/J$. In particular, $J$ is
modular.

Conversely, suppose $J$ is a maximal modular ideal in $A \obp
B$. Again, by \Cref{maximal-ideals}, $J$, being maximal, is of the
form $J = A\obp M + I\obp N$ for some maximal ideals $M$ and $N$ in
$A$ and $B$, respectively. As seen in previous paragraph, $(A\obp
B)/J$ is isomorphic to $\big(A/ M \big) \obp\big( B/N\big)$; in
particular, the latter space is unital. Therefore, by \cite[Theorem
  1]{loy}, $A/M$ and $B/N$ are both unital; so that $M$ and $N$ are
both modular as well.  \end{proof}
   
   \section{Hull-kernel topology}

As in Introduction, for any Banach algebra $A$, 
$Id(A)$ (resp., $Id'(A)$) denotes the set of closed ideals (resp.,
proper closed ideals) of $A$. And, for any algebra $A$,  $\m(A)$ (resp.,  
$\m_m(A)$) denotes the set of maximal (resp., maximal modular) ideals of $A$.

Before discussing hull-kernel topology, we briefly outline another
topology on $Id(A)$ for a Banach algebra $A$, which agrees with
hull-kernel topology on the set of maximal ideals and is called the {\it
  $\tau_w$-topology} (\cite[$\S 2$]{arc2}). A subbasis for
$\tau_w$-topology is given by the collection
$$\Big\{ U(J) :=\{ I \in Id(A): I\nsupseteq J\},\, J \in Id(A) \cup
\{\emptyset\} \Big\},$$ where $U(\emptyset) := A $. Note that $U(A) =
Id'(A)$, $U((0)) = \emptyset$ and $U(\emptyset)$ is the only subbasic
set that contains $A$. $Id(A)$ is a $T_0$ space with respect to
$\tau_w$-topology (\cite{arc2}).

 For $C^*$-algebras $A$ and $B$, consider the map $\Phi: Id(A) \times Id(B)
\to Id(A\obp B)$ defined by
\begin{equation}\label{Phi} \Phi(I, J) = 
  \ker(\pi_I \obp \pi_J) = A\obp J + I \obp B,
\end{equation}
where $\pi_I: A \to A/I$ and $\pi_J: B\to B/J$ are the canonical
quotient maps. Note that the last equality in (\ref{Phi}) follows from
\Cref{kernel-phi-ot-psi} and \Cref{sum-obp}. Also, If $(I, J) \in
Id'(A) \times Id'(B)$, then $\pi_I \obp \pi_J \neq 0$ so that $
\ker(\pi_I \obp \pi_J) $ is proper. Hence $\Phi$ maps $Id'(A) \times
Id'(B)$ into $ Id'(A\obp B)$. Analogous to \cite[Lemma 1.4]{arc3} and \cite[Lemma 2.5]{laz}, we
obtain the following:

\begin{lemma}
 Let $A$ and $B$ be $C^*$-algebras. Then, $\Phi: Id(A) \times Id(B) \ra
Id (A \obp B)$ is $\tau_w$-continuous.
  \end{lemma}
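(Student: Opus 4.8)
The plan is to verify continuity on a subbasis. Since the sets $U(L)=\{K\in Id(A\obp B):K\nsupseteq L\}$, with $L\in Id(A\obp B)\cup\{\emptyset\}$, form a subbasis for the $\tau_w$-topology on $Id(A\obp B)$, it suffices to prove that $\Phi^{-1}(U(L))$ is open for every closed ideal $L$ of $A\obp B$; equivalently, that the complementary set $C_L:=\{(I,J):L\subseteq\Phi(I,J)\}$ is closed in the product topology on $Id(A)\times Id(B)$. The case $L=\emptyset$ gives the whole space and is trivial.

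The crucial step is to transport the containment condition to the minimal tensor product, where the analogous continuity is already available. Write $\Phi^{\min}(I,J):=\ker(\pi_I\omin\pi_J)=A\omin J+I\omin B$, and recall the injective map $i_{\min}:A\obp B\ra A\omin B$ and the notation $L_{\min}=\ol{i_{\min}(L)}$ from \Cref{obp-min}. The square relating the two quotient maps through $i_{\min}$ commutes, that is, $i_{\min}\circ(\pi_I\obp\pi_J)=(\pi_I\omin\pi_J)\circ i_{\min}$, both sides sending $a\ot b\mapsto\pi_I(a)\ot\pi_J(b)$; moreover, by \Cref{obp-min} applied to the quotient $C^*$-algebras $A/I$ and $B/J$, the induced map $i_{\min}:A/I\obp B/J\ra A/I\omin B/J$ is injective. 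Consequently $(\pi_I\omin\pi_J)(i_{\min}(L))=i_{\min}\big((\pi_I\obp\pi_J)(L)\big)$ vanishes if and only if $(\pi_I\obp\pi_J)(L)$ does, which yields the equivalence
\[
L\subseteq\Phi(I,J)\iff L_{\min}\subseteq\Phi^{\min}(I,J),
\]
the right-hand kernel being closed so that replacing $i_{\min}(L)$ by its closure $L_{\min}$ is harmless.

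With this identity in hand the statement follows quickly: it says precisely that $C_L=(\Phi^{\min})^{-1}\big(\{K\in Id(A\omin B):K\supseteq L_{\min}\}\big)$. The set $\{K:K\supseteq L_{\min}\}$ is the complement of the subbasic set $U(L_{\min})$, hence $\tau_w$-closed in $Id(A\omin B)$, and the map $\Phi^{\min}:Id(A)\times Id(B)\ra Id(A\omin B)$ is $\tau_w$-continuous by the corresponding result for the minimal tensor product of $C^*$-algebras (cf.\ \cite[Lemma 1.4]{arc3}). Therefore $C_L$ is closed, and $\Phi$ is $\tau_w$-continuous.

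I expect the main obstacle to be the faithful justification of the displayed equivalence, i.e.\ checking that membership of $(\pi_I\obp\pi_J)(L)$ in the kernel is preserved and reflected after applying $i_{\min}$; this is exactly where the partial injectivity of \Cref{obp-min} and the commutativity of the quotient square are indispensable. A purely internal argument, bypassing $\omin$ and instead extracting from \Cref{elementary-tensor} a genuine elementary tensor of $L$ lying outside $\Phi(I_0,J_0)$ and then controlling it in a product neighbourhood via the closed ideals it generates in $A$ and $B$, is tempting but delicate: slicing a general element of $L$ by functionals adapted to the fixed pair $(I_0,J_0)$ need not detect non-containment as $(I,J)$ varies, so the reduction to the already-established minimal case is the cleaner route.
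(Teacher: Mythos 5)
Your proof is correct and takes essentially the same route as the paper's: both reduce to the minimal tensor product via the injective homomorphism $i_{\min}$ and the commuting quotient square (with injectivity of the induced map $A/I\obp B/J\ra A/I\omin B/J$ doing the real work), and then invoke the known $\tau_w$-continuity of $(I,J)\mapsto\ker(\pi_I\omin\pi_J)$ from Lazar's result for $\omin$. Your displayed equivalence $L\subseteq\Phi(I,J)\iff L_{\min}\subseteq\Phi^{\min}(I,J)$ is precisely the paper's factorization $\Phi=\Phi_2\circ\Phi_1$ together with its computation $\Phi_2^{-1}(U(L))=U(L_{\min})$, so the two arguments differ only in packaging.
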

  
\begin{proof}
Consider the diagram
\[
\xymatrix {  & Id(A) \times Id(B)  \ar[ld]_{\Phi_1} \ar[rd]^{\Phi} & \\
Id(A\omin B)  \ar[rr]^{\Phi_2} & & Id(A \obp B),}
\] 
where $\Phi_1(I,J) := \ker(\pi_I \omin \pi_J)$ and $\Phi_2(K) :=
i^{-1}(K) $, $i$ being the injective contractive $*$-homomorphism from
$A\obp B \to A\omin B$ (as in \Cref{obp-min}). It is known that
$\Phi_1$ is $\tau_w$-continuous - see \cite[Lemma 2.5]{laz}. So, it
suffices to show that this diagram commutes and that $\Phi_2$ is
$\tau_w$-continuous.

In order to establish commutativity of the diagram, we just need to
verify that
$$ \ker(\pi_I \obp \pi_J) = i^{-1}(\ker(\pi_I \omin \pi_J)). $$ For
$z\in A\obp B$, let $\{z_n\}$ be a sequence in $A\ot B$ such that $
\|z_n -z\|_{\gamma} \ra 0$. Let $\hat{i}: (A/I) \obp (B/J) \to (A/I)
\omin (B/J)$ be the injective continuous homomorphism. Then the
sequence $ \{\hat{i}\big((\pi_I \obp \pi_J)(z_n)\big) = (\pi_I \obp
\pi_J)(z_n)\} $ converges to $\hat{i}\big((\pi_I \obp \pi_J)(z)\big)$
in $(A/I) \omin (B/J)$.  Since $\|\cdot \|_{\min} \leq \| \cdot
\|_{\gamma}$ and $i$ is identity on $A \ot B$, $ \|z_n -i(z)\|_{\min}
\ra 0$ as well. So, $ (\pi_I \omin \pi_J)(z_n) {\longrightarrow}
(\pi_I \omin \pi_J)(i(z))$ in $(A/I) \omin (B/J)$. Since both the
mappings $\pi_I \obp \pi_J$ and $\pi_I \omin \pi_J$ agree on $A\ot B$,
by continuity, we have $$\hat{i}\big((\pi_I \obp \pi_J)(z)\big) =
(\pi_I \omin \pi_J)(i(z)).$$ The required relationship now follows from
injectivity of $\hat{i}$.

Next, we show  $ \Phi_2$ is $\tau_w$-continuous. For a subbasic
open set $U(K)$ of $Id(A\obp B)$ for some $K\in Id(A\obp B)$, we have
$\Phi_2^{-1} (U(K)) = U(K_{\min})$. Indeed, for $P \in Id(A\omin
B)$,
\begin{eqnarray*}
P \in \Phi_2^{-1} (U(K))  & \iff & i^{-1}(P) \in U(K) \\
&   \iff & i^{-1}(P)  \nsupseteq K \\
&  \iff & P \nsupseteq K_{\min} \quad (:=\ol{i(K)}) \\
& \iff & P \in U(K_{\min}).
\end{eqnarray*}
Thus, $ \Phi_2$ is $\tau_w$-continuous.  \end{proof}

\begin{lemma}{\label{ker-cont}}
 Let $A$ and $B$ be $C^*$-algebras and $(I_i, J_i) \in Id'(A)\times
 Id'(B),\, i=1,2$ be such that $ A\obp J_1 + I_1 \obp B \seq A\obp J_2
 + I_2 \obp B $. Then $I_1 \seq I_2$ and $J_1 \seq J_2$.
\end{lemma}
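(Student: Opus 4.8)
The plan is to exploit the description $\Phi(I,J)=\ker(\pi_I\obp\pi_J)=A\obp J+I\obp B$ recorded in \eqref{Phi}, together with the fact that $\gamma$ is a cross norm, so that the ordering hypothesis can be tested on elementary tensors. The argument is symmetric in the two coordinates, so I would prove $J_1\seq J_2$ and then obtain $I_1\seq I_2$ by interchanging the roles of $A$ and $B$.

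To show $J_1\seq J_2$, I would fix $b\in J_1$ and feed the elementary tensors $a\ot b$, $a\in A$, into the inclusion. Since $b\in J_1$, each $a\ot b$ lies in $A\obp J_1\seq A\obp J_1+I_1\obp B=\Phi(I_1,J_1)$, and by hypothesis this is contained in $\Phi(I_2,J_2)=\ker(\pi_{I_2}\obp\pi_{J_2})$. Applying the quotient map therefore gives $\pi_{I_2}(a)\ot\pi_{J_2}(b)=0$ in the Banach $*$-algebra $(A/I_2)\obp(B/J_2)$, for every $a\in A$.

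The final step is where the cross-norm property does the work. Because $I_2$ is a proper closed ideal, $A/I_2\neq(0)$, so I can pick $a\in A$ with $\pi_{I_2}(a)\neq 0$. Since $\|\cdot\|_\gamma$ is a cross norm, $0=\|\pi_{I_2}(a)\ot\pi_{J_2}(b)\|_\gamma=\|\pi_{I_2}(a)\|\,\|\pi_{J_2}(b)\|$, which forces $\pi_{J_2}(b)=0$, i.e.\ $b\in J_2$. Hence $J_1\seq J_2$, and the symmetric argument yields $I_1\seq I_2$.

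I do not expect a serious obstacle here. The only points requiring care are that the kernel identity in \eqref{Phi} (via \Cref{kernel-phi-ot-psi}) lets me replace the two sum-of-product ideals by kernels of concrete quotient maps, and that properness of $I_2$ (resp.\ $J_2$) guarantees the nonzero normalizing element needed to cancel the cross-norm factor.
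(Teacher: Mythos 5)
Your proposal is correct and follows essentially the same route as the paper: both pass through the identification $A\obp J+I\obp B=\ker(\pi_I\obp\pi_J)$, test the inclusion on elementary tensors with one factor in the given ideal, and use properness of the other ideal together with the cross-norm property of $\|\cdot\|_\gamma$ to force the remaining factor into the larger ideal. The only cosmetic difference is that you make the cross-norm cancellation explicit where the paper leaves it implicit.
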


\begin{proof}
 For a fixed $a\in I_1$ and any $b \in B$ we have $a \otimes b \in
 \ker (\pi_{I_1}\obp \pi_{J_1}) \seq \ker (\pi_{I_2}\obp \pi_{J_2})$,
 by the given condition. This yields $\pi_{I_2} (a) \otimes
 \pi_{J_2}(b) = 0$ for every $b\in B$. Since $J_2$ is proper,
 $\pi_{J_2}(b) \neq 0$ for some $b \in B$. So, we must have
 $\pi_{I_2}(a)=0$, that is, $a\in I_2$. Similarly, we  obtain $J_1
 \subseteq J_2$.
\end{proof}

We now obtain the following
analogue of \cite[Theorem 1.5]{arc3}, \cite[Theorem 2.6]{laz} and
\cite[Proposition 1.1(v)]{jk-13}.
\begin{theorem}
 Let $A$ and $B$ be $C^*$-algebras. Then, $\Phi$ maps $Id'(A) \times
 Id'(B)$ homeomorphically onto its image which is dense in $Id'(A \obp B)$.
\end{theorem}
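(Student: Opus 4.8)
The plan is to establish four things about the restriction of $\Phi$ to $Id'(A)\times Id'(B)$: that it is injective, continuous, and open onto its image, and that its image is dense in $Id'(A\obp B)$. Together these yield the asserted homeomorphism onto a dense subset, and I will carry them out in that order, the openness step being the only substantial one.

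Injectivity is immediate from \Cref{ker-cont}: if $\Phi(I_1,J_1)=\Phi(I_2,J_2)$, then the equality $A\obp J_1+I_1\obp B = A\obp J_2+I_2\obp B$ gives inclusions in both directions, so applying \Cref{ker-cont} twice yields $I_1=I_2$ and $J_1=J_2$. Continuity of $\Phi$ on $Id'(A)\times Id'(B)$ is inherited from the $\tau_w$-continuity of $\Phi$ on $Id(A)\times Id(B)$ proved above, since restricting the domain to a subspace and corestricting to the (subspace-topologized) image both preserve continuity.

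The crux is to show that $\Phi$ is open onto its image, equivalently that $\Phi^{-1}$ is $\tau_w$-continuous. For this I would compute the $\Phi$-image of a subbasic open set. The nontrivial subbasic sets of the product $Id'(A)\times Id'(B)$ are $U(L)\times Id'(B)$ with $L$ a nonzero proper closed ideal of $A$, together with $Id'(A)\times U(L')$ for $L'$ a proper closed ideal of $B$. The key identity I would prove is
\[
\Phi\big(U(L)\times Id'(B)\big)=U(L\obp B)\cap\Phi\big(Id'(A)\times Id'(B)\big),
\]
and symmetrically $\Phi\big(Id'(A)\times U(L')\big)=U(A\obp L')\cap\Phi\big(Id'(A)\times Id'(B)\big)$. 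The forward inclusion rests on the observation that $\Phi(I,J)\supseteq L\obp B$ forces $L\seq I$: this is precisely the slice-map argument of \Cref{ker-cont} applied to $L\obp B = A\obp(0)+L\obp B \seq A\obp J + I\obp B$, using that $J$ is proper. The reverse inclusion is the easy contrapositive, since $L\seq I$ gives $L\obp B\seq I\obp B\seq\Phi(I,J)$. Granting this identity, each subbasic open maps to a relatively open set in the image; and since $\Phi$ is a bijection onto its image it commutes with arbitrary unions and finite intersections, so it carries every open set to a relatively open one, i.e.\ $\Phi$ is open onto its image. I expect this openness step, and in particular pinning down the displayed identity via \Cref{ker-cont}, to be the main obstacle, the rest being formal.

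Finally, density is essentially free. Since $(0)$ is a proper closed ideal of each of $A$ and $B$, the pair $((0),(0))$ lies in $Id'(A)\times Id'(B)$, and $\Phi((0),(0))=A\obp(0)+(0)\obp B=(0)$, so the zero ideal belongs to the image. It then suffices to note that $(0)$ is a dense point of $Id'(A\obp B)$ in the $\tau_w$-topology: any basic neighbourhood of a proper closed ideal $P$ has the form $\bigcap_{r=1}^n U(K_r)$ with $P\nsupseteq K_r$, which forces each $K_r\neq(0)$, and hence $(0)\in U(K_r)$ for every $r$. Thus every nonempty basic open set meets the image, and the image is dense in $Id'(A\obp B)$, completing the proof.
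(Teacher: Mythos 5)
Your proof is correct. For the homeomorphism part you and the paper ultimately rest on the same computation --- namely that $\Phi(I,J)\supseteq L\obp B$ forces $L\seq I$, which is \Cref{ker-cont} applied to the pair $(L,(0))$ --- but you package it as openness of $\Phi$ onto its image by computing $\Phi$-images of subbasic sets, whereas the paper introduces a globally defined map $\Psi(K)=(K_A,K_B)$ with $K_A=\{a\in A: a\ot B\seq K\}$, proves $\Psi\circ\Phi=\mathrm{id}$ on $Id'(A)\times Id'(B)$, and shows $\Psi$ is $\tau_w$-continuous via the identity $\Psi^{-1}(U(I)\times U(J))=U(I\obp B)\cap U(A\obp J)$. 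The paper's route yields slightly more (a continuous retraction defined on all of $Id(A\obp B)$, whose components $K_A,K_B$ are reused in the density step); yours is leaner and avoids having to verify that $K_A$ and $K_B$ are closed ideals. The genuinely different step is density: you observe that $(0)=\Phi((0),(0))$ is a generic point of $Id'(A\obp B)$ in the $\tau_w$-topology, since any basic open set $\bigcap_{r}U(K_r)$ containing a proper closed ideal $P$ must have every $K_r\neq(0)$ and hence contains $(0)$; this makes density immediate. The paper instead approximates a given $K$ inside $\bigcap_i U(P_i)$ by the two image points $\Phi((0),K_B)$ and $\Phi(K_A,(0))$, using $A\obp K_B+K_A\obp B\seq K$ --- a longer argument, but one that localizes the approximating points near $K$ and would survive if the zero ideal were excluded from the ideal space under consideration. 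Both arguments are valid for the theorem as stated.
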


\begin{proof}
 For a closed ideal $K$ in $A\obp  B$, define 
 $$K_A=\{ a \in A: a\ot B \seq K\}\quad \text{and} \quad K_B = \{ b\in
 B: A \ot b \seq K\}.$$ By an easy application of continuous functional calculus, it is
 immediately seen that $K_A$ and $K_B$ are closed ideals in $A$ and $B$,
 respectively. Define $\Psi: Id(A\obp B) \to Id(A) \times Id(B)$ by $
 \Psi (K) = (K_A, K_B)$. Then $\Psi \circ \Phi$ equals {identity} on
 $Id'(A) \times Id'(B)$. To see this, consider $(I,J) \in Id'(A)
 \times Id'(B)$ and set $K= \Phi(I,J)= A\obp J + I\obp B$. Clearly
 $K_A \supseteq I$ and $K_B \supseteq J$. Also, for $a\in K_A$, if
 $I^\prime$ denotes the closed ideal generated by $a$ in $A$, then
 $I^\prime \obp B \seq K$. So, $I^\prime \in Id'(A)$ and by
 \Cref{ker-cont}, $I^{\prime} \seq I$, giving that $K_A \seq I$ and
 hence $K_A = I$. Similarly, we can see that $K_B = J$. As a
 consequence, $\Phi$ is injective on $Id'(A) \times Id'(B)$.

 It now
 suffices to show that $\Psi$ is $\tau_w$-continuous. Consider a
 subbasic open set $U(I) \times U(J)$ of $Id(A) \times Id(B)$, where
 $I \in Id(A), J \in Id(B)$. For any $K \in Id(A\obp B)$,
 \begin{eqnarray*}
  K \in \Psi^{-1}(U(I) \times U(J)) & \iff & K_A \in U(I) \quad \&
  \quad K_B \in U(J) \\ & \iff & K_A\nsupseteq I \quad \& \quad K_B
  \nsupseteq J \\ & \iff & K \nsupseteq I\obp B \quad \& \quad K
  \nsupseteq A\obp J \\
  & \iff & K \in U(I\obp B) \cap U(A\obp J) 
   \end{eqnarray*}
   Thus $\Psi^{-1} (U(I) \times U(J)) = U(I \obp B) \cap U(A\obp J)$
   and since the latter set is open in $Id(A\obp B)$, this proves our
   claim.

 We now show that $\Phi \big(Id'(A) \times Id'(B)\big)$ is dense
 in $Id'(A \obp B)$. For this, consider a $K$ in $Id'(A \obp B)$ and let
 $U$ be a basic open set in $Id'(A\obp B)$ containing $K$. Then $U =
 \cap_{i=1}^n U(P_i)$ for some $P_i \in Id'(A\obp B)$. Here, $K \in
 U(P_i)$, that is, $P_i \nsubseteq K$ for all $1 \leq i \leq n$. Now
 for each $i$, note that $A\obp K_B + K_A \obp B \seq K$; so
 $P_i\nsubseteq A \obp K_B$ and $ P_i \nsubseteq K_A \obp B$, since
 $P_i \nsubseteq K$.  This further implies that $P_i \nsubseteq
 \Phi(0,K_B)$ and $P_i\nsubseteq \Phi(K_A,0)$ so that $\Phi(0,K_B) \in
 U(P_i)$ and $\Phi(K_A,0) \in U(P_i)$ for all $1 \leq i \leq n$. Thus
 $U \cap Im(\Phi) \neq \phi$ and hence image of $\Phi$ is dense in
 $Id'(A\obp B)$.
\end{proof}

We now briefly recall  hull-kernel topology, without details.  Let
$A$ be a Banach algebra. For each $E \subseteq Prime(A)$, the set of
all proper closed prime ideals of $A$, one associates a closed ideal,
called {\it kernel} of $E$, given by $ k(E) = \bigcap_{P \in E} P.$
Also, for each $M \subseteq A$, {\it hull} of $M$ is defined as
$$ h_A(M) = \{ P \in Prime(A) : P \supseteq M \}.$$ Equip $Prime(A)$
with the {\it hull-kernel topology} (hk-topology, in short), where for
$E\subseteq Prime(A)$, its closure turns out to satisfy $\overline{E}
= h(k(E))$, which can be taken as the definition of closure for our
purpose - for details, see \cite{arc2} and references therein.

As mentioned above, it is a fact that for any Banach algebra $A$, the $\tau_w$-topology
coincides with the hull-kernel topolgy on $\m(A)$- see
\cite{arc2}. The above homeomorphism restricts well to maximal and
maximal modular ideals.

\begin{theorem}
Let $A$ and $B$ be $C^*$-algebras.  Then, the restriction of $\Phi$ to
$\m(A) \times \m(B)$ is a homeomorphism onto $\m(A \obp B)$ with
respect to the hull-kernel topology. Furthermore, $\Phi$ maps
$\m_m(A) \times \m_m(B)$ homeomorphically onto $\m_m(A
\obp B)$, as well.
\end{theorem}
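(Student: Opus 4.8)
The plan is to deduce this from the $\tau_w$-homeomorphism established in the preceding theorem, together with the structural descriptions in \Cref{maximal-ideals} and \Cref{maximal-modular}, and the already-recalled fact that the $\tau_w$-topology coincides with the hull-kernel topology on the maximal ideal space of any Banach algebra.

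First I would pin down that $\Phi$ restricts to a bijection of $\m(A) \times \m(B)$ onto $\m(A \obp B)$. Surjectivity onto $\m(A\obp B)$ together with well-definedness is exactly \Cref{maximal-ideals}: every maximal ideal of $A \obp B$ is of the form $A \obp N + M \obp B = \Phi(M,N)$ for some $M \in \m(A)$, $N \in \m(B)$, and conversely each such $\Phi(M,N)$ is maximal. Injectivity is inherited for free from the injectivity of $\Phi$ on $Id'(A) \times Id'(B)$ proved in the preceding theorem, since $\m(A) \times \m(B) \seq Id'(A) \times Id'(B)$.

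For the topological conclusion I would invoke the preceding theorem directly: it gives that $\Phi$ is a $\tau_w$-homeomorphism of $Id'(A) \times Id'(B)$ onto its image in $Id'(A \obp B)$. Restricting this homeomorphism to the subset $\m(A) \times \m(B)$ and using the surjectivity just noted, one gets a $\tau_w$-homeomorphism of $\m(A) \times \m(B)$ onto $\m(A \obp B)$. Since $\tau_w$ agrees with the hull-kernel topology on the maximal ideal space of any Banach algebra, the subspace $\tau_w$-topologies on $\m(A)$, $\m(B)$ and $\m(A \obp B)$ are precisely their hull-kernel topologies, and these identifications are compatible with forming the product topology on the left-hand factor; hence the same map is a homeomorphism for the hull-kernel topology. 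For the maximal modular statement I would run the identical argument with \Cref{maximal-modular} in place of \Cref{maximal-ideals}, which shows $\Phi$ carries $\m_m(A) \times \m_m(B)$ bijectively onto $\m_m(A \obp B)$; as $\m_m(A) \seq \m(A)$ (and likewise for $B$ and for $A \obp B$), each maximal-modular space carries the subspace topology of the corresponding maximal ideal space, so the homeomorphism already obtained simply restricts to the desired one.

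I do not anticipate a serious obstacle, as the heavy lifting sits in \Cref{maximal-ideals}, \Cref{maximal-modular}, and the preceding $\tau_w$-homeomorphism theorem. The only point needing genuine care will be confirming that restricting a $\tau_w$-homeomorphism yields a hull-kernel homeomorphism; that is, that the $\tau_w$/hull-kernel identification holds on each factor \emph{and} passes correctly through the product, so that the product of the two subspace topologies on $\m(A) \times \m(B)$ is the one the restricted $\Phi$ is continuous for.
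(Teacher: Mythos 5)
Your proposal is correct, but it takes a genuinely different route from the paper for the topological half of the statement. The bijectivity step is the same in both (via \Cref{maximal-ideals} and \Cref{maximal-modular}, with injectivity inherited from the injectivity of $\Phi$ on $Id'(A)\times Id'(B)$). For the homeomorphism itself, you restrict the $\tau_w$-homeomorphism of the preceding theorem to $\m(A)\times\m(B)$ and then translate $\tau_w$ into the hull-kernel topology on each of $\m(A)$, $\m(B)$ and $\m(A\obp B)$, checking that the identification is compatible with products and with the further restriction to $\m_m$. The paper uses the earlier material only to obtain continuity of $\Phi$ (from its $\tau_w$-continuity) and then proves the hard direction from scratch: it shows $\Phi$ is a \emph{closed} map for the hull-kernel topologies by reducing, via injectivity and the form of closed sets in a product of hull-kernel spaces, to showing that $X:=\Phi\big(F_A\times\m(B)\big)$ is hull-kernel closed for each closed $F_A\subseteq\m(A)$, and it verifies $h(k(X))\subseteq X$ directly by producing, for any $P=\Phi(I,J)\notin X$, an elementary tensor $a\ot b$ lying in $k(X)$ but not in $P$ (taking $a\in k(F_A)\setminus I$ and $b\notin J$). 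Your argument is shorter and exhibits the result as a formal corollary of the $\tau_w$-theorem together with the $\tau_w$/hull-kernel coincidence; the paper's closed-map computation is self-contained in the hull-kernel framework, does not rely on the continuity of the inverse map $\Psi$ established earlier, and mirrors the classical Gelbaum--Laursen style of argument. The one point your route genuinely needs --- that the hull-kernel topology on a subset of $Prime(A)$ is the subspace topology, so that the identifications pass correctly through products and through restriction to $\m_m$ --- is exactly the point you flag, and it is routine.
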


\begin{proof}
By \Cref{Phi}, \Cref{maximal-ideals} and \Cref{maximal-modular}, $\Phi$ maps
$\m(A) \times \m(B)$ (resp., $\m_m(A) \times \m_m(B)$) bijectively onto
$\m(A \obp B)$ (resp., $\m_m(A \obp B)$).

Since $\Phi$ is continuous (being $\tau_w$-continuous), it just remains
to show that $\Phi$ is a closed map with respect to the product
$hk$-topology on $\m(A) \times \m(B)$ and the $hk$-topology on $\m(A
\obp B)$. First of all note that any closed set in $\m(A) \times
\m(B)$ is of the form $$ \cap_\alpha \Big(\big(F_A^\alpha \times
\m(B)\big) \cup \big(\m(A) \times F_B^\alpha\big)\Big),$$ where
$F_C^\alpha$ is closed  in $\m(C)$ for $C = A,
B$. Also, since $\Phi$ is injective, we have
 $$ \Phi \Big(\cap_\alpha \Big( \big(F_A^\alpha \times \m(B)\big) \cup
\big(\m(A) \times F_B^\alpha) \big) \Big) = \cap_\alpha
\Big(\Phi\big(F_A^\alpha \times \m(B)\big) \cup \Phi\big(\m(A) \times
F_B^\alpha\big)\Big).$$
Thus, it is sufficient to prove that for any
closed set $F_A$ of $\m(A)$, $X:=\Phi\big(F_A \times \m(B)\big)$ is
closed in $\m(A \obp B)$. We have to show that $h(k(X)) \subseteq
X$. Let $P\in \m(A \obp B)$ be such that $k(X) \subseteq P$. Let, if
possible, $P\notin X$. Since $P = A\obp J + I \obp B= \ker(\pi_I \obp
\pi_J)$, for some $I \in \m(A), J \in \m(B)$, it follows that $I
\notin F_1$. But $F_A$ is closed in $hk$-topology, thus $k(F_A)
\nsubseteq I$. Let $a \in k(F_A) \setminus I$ and fix $b \notin
J$. Note that $(a+ I) \otimes (b+J) \neq 0$ which gives that $a
\otimes b \notin \ker(\pi_I \obp \pi_J) = P$. On the other hand,
consider any $K:= \Phi(L \times M) \in X$, where $ L \in F_A$ and $ M
\in \m(B)$. Since $ a\in k(F_A) \subseteq L $ we have $ (a+L) \otimes
(b+M) =0$. Thus $a\otimes b \in \ker(\pi_L \obp \pi_M) = K$ and this
is true for all $K \in X$. So, $a\otimes b \in k(X) \subseteq P$,
which gives a contradiction. \end{proof}

\section{Center of $A\obp B$}

For algebras $A$ and $B$ with centers $\mcal{Z}(A)$ and $\mcal{Z}(B)$,
respectively, one can easily check that there is a canonical algebra
isomorphism between $\mcal{Z}(A \ot B)$ and $ \mcal{Z}(A) \ot\mcal{Z}(
B)$. For any two $C^*$-algebras $A$ and $B$ and any $C^*$-norm $\|
\cdot \|_\alpha$, it is known that the above isomorphism extends to
an isometric $*$-isomorphism from $\mcal{Z}(A \ota B)$ onto
$\mcal{Z}(A) \ota \mcal{Z}( B)$ - see \cite{arc}. Making explicit use
of injectivity of $\oh$, the above natural map also extends to an
isometric algebra isomorphism from $\mcal{Z}(A \oh B)$ onto
$\mcal{Z}(A) \oh \mcal{Z}( B)$ - see \cite{ass}; and for $\oop$, it
extends to an algebraic $*$-isomorphism (not necessarily isometric)
from $\mcal{Z}(A \oop B)$ onto $\mcal{Z}(A) \oop \mcal{Z}( B)$ - see
\cite{JK}. With the kind of partial injectivity for $\obp$ established
in Section 2, their analogue for $\obp$ is quite satisfying.

\begin{theorem}
For $C^*$-algebras $A$ and $B$, $\mcal{Z}(A \obp B) = \mcal{Z}(A) \obp
\mcal{Z}( B).$
  \end{theorem}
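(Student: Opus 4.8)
The plan is to establish the two inclusions $\mcal{Z}(A) \obp \mcal{Z}(B) \subseteq \mcal{Z}(A \obp B)$ and $\mcal{Z}(A \obp B) \subseteq \mcal{Z}(A) \obp \mcal{Z}(B)$ separately, treating both as closed subspaces of $A \obp B$ via the partial injectivity of \Cref{obp-injective}. The first inclusion is the routine direction: since $\mcal{Z}(A)$ and $\mcal{Z}(B)$ are $C^*$-subalgebras of $A$ and $B$, \Cref{obp-injective} identifies $\mcal{Z}(A) \obp \mcal{Z}(B)$ isometrically with the closed $*$-subalgebra $\ol{\mcal{Z}(A) \ot \mcal{Z}(B)} \subseteq A \obp B$. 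An elementary central tensor $u \ot v$ with $u \in \mcal{Z}(A)$, $v \in \mcal{Z}(B)$ clearly commutes with every elementary tensor $a \ot b$, hence (by continuity of multiplication and density of $A \ot B$ in $A \obp B$) with all of $A \obp B$; passing to limits gives that every element of $\ol{\mcal{Z}(A) \ot \mcal{Z}(B)}$ is central.

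The reverse inclusion is where the real work lies, and the plan is to exploit slice maps together with \Cref{obp-injective}. First I would fix $z \in \mcal{Z}(A \obp B)$. Using \cite[Proposition 2.8]{ryan} write $z = \sum_{i=1}^\infty a_i \ot b_i$ as an absolutely convergent sum. The goal is to show $z \in \mcal{Z}(A) \obp \mcal{Z}(B)$. The natural strategy is: for any $\varphi \in A^*$ the right slice map $R_\varphi : A \obp B \to B$, $R_\varphi(\sum a_i \ot b_i) = \sum \varphi(a_i) b_i$, is bounded; and similarly the left slice map $L_\psi : A \obp B \to A$ for $\psi \in B^*$. Centrality of $z$ should force each slice to land in the appropriate center. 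Concretely, for $z \in \mcal{Z}(A \obp B)$ and any $b \in B$, the element $z(1 \ot b) - (1 \ot b)z = 0$ (working in the multiplier algebra or with approximate units when $A, B$ are nonunital), and applying $R_\varphi$ shows that the $B$-slice $R_\varphi(z)$ commutes with $b$; since $b$ is arbitrary, $R_\varphi(z) \in \mcal{Z}(B)$ for all $\varphi$. Symmetrically $L_\psi(z) \in \mcal{Z}(A)$ for all $\psi$. The final step is to deduce from ``all slices of $z$ are central'' that $z$ itself lies in $\ol{\mcal{Z}(A) \ot \mcal{Z}(B)}$. This is precisely a slice-map / tensor-product characterization: an element of $A \obp B$ all of whose right slices lie in a closed subspace $F \subseteq B$ and all of whose left slices lie in a closed subspace $E \subseteq A$ must lie in $\ol{E \ot F}$, which here equals $\mcal{Z}(A) \obp \mcal{Z}(B)$ by \Cref{obp-injective}.

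The main obstacle I anticipate is the nonunital case and the justification of the final slice-map characterization. In the nonunital setting the identities $1 \ot b$ and $a \ot 1$ do not live in $A \obp B$, so I would replace them by bounded approximate units $\{e_\mu\} \subset A$, $\{f_\nu\} \subset B$ and argue that $z$ commutes with $e_\mu \ot f_\nu$ for all $\mu, \nu$, then pass to appropriate limits using that slice maps are norm-continuous and that $e_\mu \stackrel{\cdot}{\to}$ act as approximate units under slicing. The genuinely delicate point is the closure assertion: that an element with all left slices in $\mcal{Z}(A)$ and all right slices in $\mcal{Z}(B)$ must belong to $\ol{\mcal{Z}(A) \ot \mcal{Z}(B)}$. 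I would prove this by a Hahn--Banach / bipolar argument in the spirit of \Cref{kernel-phi-ot-psi}: identify $(A \obp B)^* $ with bounded bilinear forms, show that any functional annihilating $\mcal{Z}(A) \ot \mcal{Z}(B)$ also annihilates $z$ by writing such a functional through the slice structure, and conclude via the bipolar theorem. This reduces the whole reverse inclusion to \Cref{obp-injective} (to make sense of $\mcal{Z}(A) \obp \mcal{Z}(B)$ as a closed subspace) plus the slice-map continuity, and the equality of the two inclusions then yields the claimed identification.
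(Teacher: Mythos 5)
Your first inclusion and the observation that every slice of a central element is central are both fine, but the final step of your reverse inclusion has a genuine gap, and it is exactly where the whole difficulty of the theorem sits. The assertion that an element $z\in A \obp B$ all of whose left slices lie in $\Z(A)$ and all of whose right slices lie in $\Z(B)$ must belong to $\ol{\Z(A) \ot \Z(B)}$ is a slice-map (Fubini product) property, and it does not follow from the bipolar argument you sketch. To run that argument you would need: every $f \in (A \obp B)^*$ vanishing on $\Z(A) \ot \Z(B)$ also vanishes on $z$. But such an $f$ corresponds to an arbitrary bounded operator $T : A \to B^*$, whereas the hypothesis on the slices of $z$ only controls the values $(\varphi \ot \psi)(z)$ of product functionals (equivalently, of rank-one $T$); the annihilator of $\Z(A)\ot\Z(B)$ is far larger than the span of product functionals $\varphi\ot\psi$ with $\varphi\perp\Z(A)$ or $\psi\perp\Z(B)$. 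Passing from these to a general $T$ is precisely an approximation-property issue, and there is no bounded net of finite-rank maps $A \to \Z(A)$ converging to the identity that would let you push $z$ into $\Z(A)\ot\Z(B)$; writing $z = \sum_i a_i \ot b_i$ does not help because the $a_i, b_i$ need not be central. For the minimal tensor product the analogous Fubini-product statement is known to fail in general, so this step cannot be treated as routine.

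The paper sidesteps this entirely: it maps $z$ into $A \oh B$ via the injective homomorphism $i_h$ of \Cref{obp-min}, invokes the known identification $\Z(A \oh B) = \Z(A) \oh \Z(B)$ from \cite{ass} (whose proof is itself nontrivial and uses completely bounded multiplier techniques), and then returns to $\obp$ by means of Grothendieck's inequality: since $\Z(A)$ and $\Z(B)$ are commutative, $\|x\|_\gamma \leq K_G \|x\|_h$ on $\Z(A)\ot\Z(B)$, so the canonical map $\Z(A)\obp\Z(B) \to \Z(A)\oh\Z(B)$ is surjective and $z$ is hit by an element of $\Z(A)\obp\Z(B)$. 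Some substitute for this norm-equivalence step (or for the slice-map property you assert) is indispensable; as written, your proof is incomplete at its crucial point.
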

\begin{proof}

Since $\Z(A \ot B) \subseteq \Z(A \obp B)$, consider the identity
function from $ \Z(A) \ot \Z(B)$ into $ \Z(A \obp B)$. By Theorem
\ref{obp-injective}, $\Z(A) \obp \Z(B)$ can be considered as a
$*$-subalgebra of $A\obp B$, so that for any $u\in \Z(A) \ot \Z(B)$, $
\|u\|_{\Z(A) \obp \Z(B)} = \|u\|_{A\obp B} $. Thus, the identity
function extends uniquely to an isometric $*$-homomorphism, say,
$\theta$ from $\Z(A) \obp \Z(B)$ into $\Z(A\obp B)$. It only remains
to show that $\theta $ is surjective.

Let $z \in \Z(A \obp B)$.  Consider $i: A\obp B \ra A\oh B$, the
canonical injective homomorphism as in \Cref{obp-min}. It is easily
seen that $i(z)x = x i(z)$ for all $x\in \Z(A) \ot \Z(B)$; so that $
i(z) \in \Z(A) \oh \Z(B)$, and by \cite{ass}, we have $ \Z(A\oh B) =
\Z(A) \oh \Z(B)$.  Now, let $i^\prime: \Z(A) \obp \Z(B) \ra \Z(A) \oh
\Z(B)$ be the canonical injective homomorphism (like the map $i$). Then, the
following diagram
$$
\xymatrix{
\Z(A)\obp \Z(B) \ar[rr]^\theta \ar[rd]_{i^\prime} && \Z(A\obp B) \ar[ld]^{i}\\
 & \Z(A)\oh \Z(B)} 
$$ commutates because $i \circ \theta = i^\prime $ on $\Z(A)
\ot \Z(B)$ and all the three maps are continuous.

Note that, the
map $i^\prime$ is surjective as well. To see this, consider an element
$z^\prime \in \Z(A) \oh \Z(B)$ and fix a sequence $\{z_n\} \seq
\Z(A)\ot \Z(B)$ such that $\|z_n -z^\prime\|_h \ra 0 $. By
Grothendieck inequality for commutative $C^*$-algebras (see
\cite{pis}), we have
$$
\|x\|_\gamma \leq K_G \|x\|_h \ \text{for all}\ x\in \Z(A)\ot
\Z(B),
$$ $K_G$ being the Grothendieck constant. Thus, the sequence $\{z_n\}$
is Cauchy with respect to $\|\cdot\|_{\gamma}$ and converges to some
$z^{\prime\prime}$ in $\Z(A) \obp \Z(B)$. This shows that $\{z_n =
i'(z_n)\}$ converges to $z'$ as well as to $z''$ in $\Z(A) \oh
\Z(B)$. So, $i^\prime(z^{\prime\prime}) = z^\prime$ and
$i^\prime$ is surjective.

Thus, for above $i(z)$ in  $\Z(A) \oh \Z(B)$,
there exists some $w \in \Z(A) \obp \Z(B)$ such that $i(z) =
i^\prime(w) = i( \theta(w))$. Since $i$ is injective, $z= \theta(w)$,
so that $\theta $ is surjective and we are done. 
\end{proof}

It would be interesting to provide an answer to the following:
\vspace*{1mm}

\noindent{\sc Question.} 
  Is there any relationship, as above, between the center of $A \obp B$ and
  $\Z(A) \obp \Z(B)$ for Banach algebras $A$ and $B$?



\begin{thebibliography}{99}

\bibitem{ass} S. D. Allen, A. M. Sinclair and R. R. Smith, The ideal
  structure of the Haagerup tensor product of $C^*$-algebras, J. Reine
  Angew. Math., 442, 1993, 111 - 148.
  
\bibitem{arc} R. J. Archbold, On the centre of a tensor product of
  $C^*$-algebras, J. London Math. Soc., 10, 1975, 257 - 262.
  
  \bibitem{arc2} $\ldots \ldots \ldots \ldots $, Topologies for primal
    ideals, J. London Math. Soc.(2), 37, 1987, 524 - 542.
  
  \bibitem{arc3} R. J. Archbold, D. W. B. Somerset, E. Kaniuth and
    G. Schlichting, Ideal spaces of the Haagerup tensor product of
    $C^*$-algebras, International J. Math., 8 (1), 1997, 1 - 29.

   
\bibitem{bk} E. Blanchard and E. Kirchberg, Non-simple purely infinite
  $C^*$-algebras: the Hausdorff case, J. Funct. Anal., 207, 2004,  461 - 513.
   

\bibitem{puglisi} J. Diestel, A. M. Peralta and D. Puglisi, Sequential
  $w$-right continuity and summing operators, {Math. Nachr.}, {284},
  2011, 664 - 680.

  
\bibitem{dixon} P. G. Dixon,  Non-closed sums of closed ideals in
  Banach algebras, Proc. Amer. Math. Soc., 128 (12), 2000, 3647 -
   3654.

\bibitem{ERbook} E. G. Effors and Z.-J. Ruan, Operator Spaces, LMS
  Monographs New Series 23, Oxford University Press, 2000.

  
  \bibitem{gelbaum1} B. R. Gelbaum, Tensor products of Banach
    algebras, Canad. J. Math, 11, 1959, 297 - 310.

  \bibitem{gelbaum2} $\ldots \ldots \ldots \ldots $, Note on the
    tensor product of Banach algebras, Proc. Amer. Math. Soc., 12 (5),
    1961, 750 - 757.

  \bibitem{gelbaum3} $\ldots \ldots \ldots \ldots $, Tensor products
    of Banach algebras II, Proc. Amer. Math. Soc., 25(3), 1970, 470 - 474.

\bibitem{GJ} V. P. Gupta and R. Jain, On closed Lie ideals of certain
  tensor products of $C^*$-algebras, Math. Nachr., 2017. DOI:
  10.1002/mana.201700009.

\bibitem{haag} U. Haagerup, The Grothendieck inequality for bilinear
  forms on $C^*$-algebras, Adv. Math., { 56}, 1985, 93 - 116.

\bibitem{jain} R. Jain, Operator space tensor products of
  $C^*$-algebras and their ideal structures, Doctoral Thesis,
  University of Delhi, 2012.

\bibitem{JK} R. Jain and A. Kumar, Operator space tensor products of
  $C^*$-algebras, Math. Z., 260, 2008, 805 - 811.
  \bibitem{jk11} $\ldots \ldots \ldots \ldots $, Ideals in operator
    space projective tensor product of $ C^*$-algebras,
    J. Aust. Math. Soc., 91, 2011, 275 - 288.
\bibitem{JK-edin} $\ldots \ldots \ldots \ldots $, The operator space projective
  tensor product: Embedding into the second dual and ideal structure,
  Proc. Edin. Math. Soc., 57, 2014, 505 - 519.

\bibitem{jk-13}  $\ldots \ldots \ldots \ldots $, Spectral synthesis for
  the operator space projective tensor product of $C^*$-algebras,
  Bull. Malays. Math. Scie. Soc. (2), 36 (4), 2013, 855 - 864.

\bibitem{Kum01} A. Kumar, Operator space projective tensor product of
  $C^*$-algebras, Math. Z., 237, 2001, 211 - 217.
\bibitem{kr-1} A. Kumar and V. Rajpal, Symmetry and quasi-centrality
  of operator space projective tensor product, Arch. Math., 99, 2012, 519 - 529.

\bibitem{kr-2} $\ldots \ldots \ldots \ldots $, Projective tensor products of
  $C^*$-algebras, Adv. Pure Math., 4, 2014, 176 - 188.

\bibitem{KS} S. Kaijser and A. M. Sinclair, Projective tensor products
  of $C^*$-algebras, Math. Scand., 55, 1984, 161 - 187.
  
\bibitem{laur} K. B. Laursen, Maximal two sided ideals in tensor
    products of Banach algebras, Proc. Amer. Math. Soc., 25 (3), 1970,
    475 - 480.
    
\bibitem{laz} A. J. Lazar, The space of ideals in the minimal tensor
  products of $C^*$-algebras, Math. Proc. Camb. Phil. Soc., 148, 2010,
  243 - 252.

\bibitem{loy} R. J. Loy. Identities in tensor product of Banach
  algebras, Bull. Aust. Math. Soc., 2, 1970, 253 - 260.

\bibitem{pis} G. Pisier. Factorization of linear operators and
  geometry of Banach spaces. CBMS Regional Conf. Series in Math, 60,
  Amer. Math. Soc., Providence, RI, 1986.

 \bibitem{ryan} R. A. Ryan, Introduction to tensor products of Banach
   spaces, Springer, 2002.
    \bibitem{Tak} M. Takesaki, Theory of Operator Algebras I,
  Springer-Verlag, 1979.
\bibitem{tom} J. Tomiyama, Primitive ideals in tensor products of
  Banach algebras, Math. Scand., 30, 1972, 257 - 262.
  
\end{thebibliography}
\end{document}